\theoremstyle{plain}
\newtheorem{thm}{Theorem}[section]
\newtheorem*{thm*}{Theorem}
\newtheorem{lem}[thm]{Lemma}
\newtheorem{prop}[thm]{Proposition}
\newtheorem{cor}[thm]{Corollary}
\theoremstyle{definition}
\newtheorem{nota}[thm]{Notation}
\newtheorem{ex}[thm]{Example}
\newtheorem{hyp}[thm]{Hypothesis}
\theoremstyle{remark}
\newtheorem{rem}[thm]{Remark}
\Crefname{thm}{Theorem}{Theorems}
\Crefname{lm}{Lemma}{Lemmata}
\Crefname{prop}{Proposition}{Propositions}
\Crefname{cor}{Corollary}{Corollaries}
\Crefname{hyp}{Hypothesis}{Hypotheses}
\Crefname{q}{Question}{Questions}
\Crefname{defn}{Definition}{Definitions}
\Crefname{nota}{Notation}{Notations}
\Crefname{ex}{Example}{Examples}
\Crefname{xca}{Exercise}{Exercises}
\Crefname{rem}{Remark}{Remarks}
\Crefname{constr}{Construction}{Constructions}
\newcommand{\N}{\mathbb{N}}
\newcommand{\Q}{\mathbb{Q}}
\newcommand{\Ccal}{\mathcal{C}}
\newcommand{\Dcal}{\mathcal{D}}
\newcommand{\Gcal}{\mathcal{G}}
\newcommand{\Rcal}{\mathcal{R}}
\newcommand{\Scal}{\mathcal{S}}
\newcommand{\Ucal}{\mathcal{U}}
\newcommand{\id}{\textup{id}}
\newcommand{\Hom}{\textup{Hom}}
\newcommand{\unit}{\mathbf{1}}
\newcommand{\Hbb}{\mathbb{H}}
\newcommand{\conn}{\mathsf{conn}}
\newcommand{\Fact}{\mathsf{Fact}}
\newcommand{\Emb}{\mathsf{Emb}}
\newcommand*{\isoarrow}[1]{\arrow[#1,"\rotatebox{90}{\(\sim\)}"]}
\newcommand{\Var}{\mathsf{Var}}
\newcommand{\Sm}{\mathsf{Sm}}
\newcommand{\Perv}{\mathsf{Perv}}
\newcommand\blfootnote[1]{%
	\begingroup
	\renewcommand\thefootnote{}\footnote{#1}%
	\addtocounter{footnote}{-1}%
	\endgroup
}
\title{Extending monoidal structures on fibered categories via embeddings}
\author[Luca Terenzi]{Luca Terenzi}
\address{Luca Terenzi \newline
	\indent UMPA, ENS de Lyon \newline
	\indent 46 Allée d'Italie, 69007 Lyon (France)}
\email{\normalfont\href{mailto:luca.terenzi@ens-lyon.fr}{luca.terenzi@ens-lyon.fr}}
\begin{document}

\maketitle

\begin{abstract}
	Let $\Scal$ be a small category, and suppose that we are given a full subcategory $\Ucal$ such that every object of $\Scal$ can be embedded into some object of $\Ucal$ in the same way as every quasi-projective algebraic variety admits a closed embedding into a smooth one. 
	We show that every monoidal structure on a given $\Scal$-fibered category satisfying certain natural conditions is completely determined by its restriction to $\Ucal$; in fact, any monoidal structure over $\Ucal$ satisfying similar natural conditions admits an essentially unique extension to the whole of $\Scal$. In this way, one can recover the unit constraint on the classical constructible derived categories over quasi-projective algebraic varieties from the abelian categories of perverse sheaves.
	The same principle applies to morphisms of $\Scal$-fibered categories and monoidality thereof. This allows one to fully understand the unitarity properties of the natural realization of M. Saito's categories of mixed Hodge modules into the underlying categories of perverse sheaves.
\end{abstract}

\tableofcontents

\blfootnote{\textit{\subjclassname}. 18D30, 18M05.}
\blfootnote{\textit{\keywordsname}. Monoidal fibered categories, embeddings, perverse sheaves.}

\blfootnote{The author acknowledges support by the GK 1821 "Cohomological Methods in Geometry" at the University of Freiburg and by the Labex Milyon at the ENS Lyon.}

\section*{Introduction}

\subsection*{Motivation and goal of the paper}

In Algebraic Geometry, studying smooth varieties often proves much easier than studying singular varieties: this basically happens every time one tries to study singular varieties via algebraic differential forms, which are well-behaved only in the smooth case. For instance, it is unreasonable to define algebraic de Rham cohomology of singular varieties via the usual algebraic de Rham complex, since this would not produce cohomology groups of the expected dimension; see \cite[Ex.~4.4]{AraKang} for an explicit counterexample. More generally, the behavior of algebraic differential equations over a singular complex variety $S$ is quite subtle both from the perspective of $\mathcal{D}$-modules and from that of local systems: on the side of $\Dcal$-modules, a typical pathology is that the ring $\mathcal{D}(S)$ of algebraic differential operators over $S$ is not necessarily Noetherian, so it is unreasonable to define $\mathcal{D}$-modules over $S$ naively as modules over $\mathcal{D}(S)$; a typical issue on the side of local systems is that an ordinary local system over $S$ is not a perverse sheaf (up to shiftings), so it does not arise from a regular holonomic $\Dcal$-module over $S$. 

When the singular variety $S$ is quasi-projective, a useful workaround is to choose a closed embedding $s: S \hookrightarrow Y$ into a smooth (still quasi-projective) variety $Y$: in the $\Dcal$-module setting, one is led to define the category of $\Dcal$-modules over $S$ as the subcategory of those $\Dcal$-modules over $Y$ which vanish on the open complement $Y \setminus S$; in the local system setting, one can study those local systems over $S$ that extend to local systems over $Y$ in this way. In both cases, the main point is to check that the sought-after categories or objects are essentially independent of the choice of the specific closed embedding $s: S \hookrightarrow Y$, and that they enjoy the expected functoriality with respect to morphisms of possibly singular varieties $f: T \rightarrow S$.

The success of this approach is related to - and, a posteriori, explained by - the existence of M. Saito's six functor formalism of \textit{mixed Hodge modules} over quasi-projective complex varieties, developed in \cite{Sai90}. In fact, for every coefficient system $\Hbb$ over quasi-projective varieties satisfying the six functor formalism (as formalized, for example, in J. Ayoub's thesis \cite{Ayo07a,Ayo07b}), a closed embedding $s: S \hookrightarrow Y$ determines an adjoint pair of the form
\begin{equation*}
	s^*: \Hbb(Y) \rightleftarrows \Hbb(S) :s_*
\end{equation*}
in which the direct image functor $s_*$ is fully faithful while the inverse image functor $s^*$ is monoidal. This allows one to identify objects of $\Hbb(S)$ with the objects of $\Hbb(Y)$ having support in $S$, and also to identify the unit object $\unit_S \in \Hbb(S)$ with the pull-back $s^* \unit_Y$. Combining these two facts, one is able to reconstruct the unit constraint over $S$
\begin{equation*}
	u_S: \unit_S \otimes A \xrightarrow{\sim} A
\end{equation*}
from the unit constraint over $Y$, by the formula
\begin{equation*}
	s^* \unit_Y \otimes A = s^* \unit_Y \otimes s^* s_* A \xrightarrow{m_s} s^* (\unit_Y \otimes s_* A) \xrightarrow{u_Y} s^* s_* A = A.
\end{equation*}
A similar strategy works in presence of a morphism of coefficient systems $R: \Hbb_1 \rightarrow \Hbb_2$. In particular, in order to make sure that a given morphism is compatible with unit constraints over all quasi-projective varieties, it suffices to check its unitarity over smooth varieties.

The main goal of the present paper is to turn the embedding principle just sketched into a rigorous general method applicable to the whole theory of monoidal categories over quasi-projective algebraic varieties. In particular, we want to exploit the embedding method to address the problem of extending monoidal structures, morphisms of fibered categories, and monoidality thereof.

This work is part of a series of articles whose general motivation is to study the abstract monoidality properties of coefficient systems modeled on perverse sheaves. To fix notation, let $k$ be a subfield of the complex numbers and consider the category $\Var_k$ of quasi-projective algebraic $k$-varieties. To every $S \in \Var_k$ one can attach its algebraically constructible derived category, denoted $D^b_c(S,\Q)$. As $S$ varies in $\Var_k$, these categories assemble into a monoidal $\Var_k$-fibered category, and this structure actually extends to a complete six functor formalism. The classical theory of \cite{BBD82} yields a perverse $t$-structure on $D^b_c(S,\Q)$, whose heart is the abelian category $\Perv(S)$ of \textit{perverse sheaves}. As shown by Beilinson in \cite{Bei87}, the triangulated category $D^b_c(S,\Q)$ can be recovered as the bounded derived category $D^b(\Perv(S))$. In light of Beilinson's result, it is natural to ask whether the six functor formalism can be reconstructed on the level of perverse sheaves. In our articles \cite{Ter23Fib} and \cite{Ter23Fact} we address this problem systematically, especially for what concerns the monoidal structure. Our techniques yield a satisfying description of the constructible derived categories as a monoidal $\Var_k$-fibered category in terms of functors respecting the perverse $t$-structures. The only question that is left open is how to describe the unit constraint in terms of perverse sheaves: the fundamental issue, as alluded to above, is that the unit object $\unit_S \in D^b_c(S,\Q)$ is not a perverse sheaf (up to shiftings) unless $S$ is smooth. The techniques introduced in the present paper solve this problem.

The issues just mentioned, although rather formal-looking in the setting of usual perverse sheaves, become quite relevant for more refined coefficient systems such as mixed Hodge modules. Indeed, Saito's triangulated categories are defined directly as the bounded derived categories of their perverse hearts, and the six functors on mixed Hodge modules are obtained by cleverly combining various exact functors defined on the level of abelian categories. The same issues become even more serious in the emerging theory of \textit{perverse Nori motives}, introduced by F. Ivorra and S. Morel in \cite{IM19}. This consists of a system of abelian categories of motivic perverse sheaves defined in terms of a purely category-theoretic universal property. One cannot describe objects and morphisms in these categories in a concrete way; the same applies, a fortiori, to functors and natural transformations relating their derived categories. We ultimately want to apply our abstract results to the construction of a monoidal structure on perverse Nori motives, which we achieve in \cite{Ter23Nori} by working directly on the perverse hearts. 
The present work is exploited mostly in the construction of the motivic unit constraint, which represents a fundamental part of the monoidal structure. We plan to apply the present results to a larger extent in order to study Ivorra's perverse motivic realization functors, defined in \cite{IvReal}. Without going into the technical details, let us just mention that these realization functors are only available over smooth varieties; again, the obstruction to defining them in the singular case lies in the lack of perversity of the unit object. In principle, our results would allow us to extend the realization to possibly singular varieties automatically, on the condition of establishing the compatibility of Ivorra's construction with the six functors. We hope to be able to say more about this problem in future work.

\subsection*{Main results}

The basic principle behind our constructions is that a possibly singular quasi-projective variety $S$ admits a closed embedding $s: S \hookrightarrow Y$ into a smooth quasi-projective variety $Y$, which allows one to define structures over $S$ from similar structures over $Y$.
We work in a general axiomatic framework where we can speak about closed immersions and smooth morphisms in a purely categorical setting. Such a framework was introduced in the companion paper \cite{Ter23Fact}; here, we need to strengthen its assumptions slightly in order to make our abstract embedding method work. We also introduce suitable axioms on a (possibly non-full) subcategory $\Ucal$ of $\Scal$ mimicking some basic properties of smooth varieties: they guarantee the existence of a closed embedding from every object of $\Scal$ into some object of $\Ucal$. 

Our main results only apply to a certain class of triangulated $\Scal$-fibered categories that partly satisfy the axiomatics of stable homotopy $2$-functors studied by Ayoub in \cite{Ayo07a,Ayo07b} and in \cite{Ayo10}: following \cite[\S~3]{Ter23Fact}, we call them \textit{localic $\Scal$-fibered categories}, and the only morphisms of $\Scal$-fibered categories that we allow between any two such are called \textit{localic morphisms}.

Our first main result gives a way to recover localic morphisms of localic $\Scal$-fibered categories from their restrictions to the underlying $\Ucal$-fibered categories. It can be stated as follows:

\begin{thm*}[\Cref{thm_ext}]
	Let $R: \Hbb_1 \rightarrow \Hbb_2$ be a localic morphism between localic $\Scal$-fibered categories. Then the morphism $R$ is completely determined by its restriction to $\Ucal$. Conversely, every localic morphism between the underlying localic $\Ucal$-fibered categories extends uniquely to the whole of $\Scal$.
\end{thm*}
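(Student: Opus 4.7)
The plan is to exploit, for each closed immersion $s: S \hookrightarrow Y$ in a localic $\Scal$-fibered category, the two fundamental properties that $s_*$ is fully faithful with $s^* s_* \cong \id$, and that for any localic morphism $R$ one has a canonical isomorphism $s_* R_S(A) \cong R_Y(s_* A)$ (either as part of the definition of ``localic'', or as a mate of the commutation with $s^*$). Together with the smoothness-mimicking axioms on $\Ucal$, which guarantee that every $S \in \Scal$ admits a closed embedding into some $Y \in \Ucal$ and that $\Ucal$ is stable under products, these ingredients suffice to drive both directions.

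For uniqueness, fix $R$ defined over all of $\Scal$ and, for each $S$, a closed embedding $s: S \hookrightarrow Y$ with $Y \in \Ucal$. Applying $s^*$ to $s_* R_S(A) \cong R_Y(s_* A)$ yields the formula
\[
    R_S(A) \cong s^* R_Y(s_* A),
\]
whose right-hand side only involves $R|_\Ucal$; the analogous formula for morphisms shows that $R$ is entirely determined by its restriction to $\Ucal$. Uniqueness of the extension in the second half of the statement then follows automatically.

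For existence, take the displayed formula as definition. Three verifications are needed: (i) independence of the choice of $s$; (ii) functoriality of $R_S: \Hbb_1(S) \to \Hbb_2(S)$; and (iii) pseudo-naturality in $\Scal$ together with the localic compatibilities. For (i), one compares two embeddings $s_1, s_2$ by forming the diagonal $(s_1, s_2): S \hookrightarrow Y_1 \times Y_2$ and using the commutation of $R|_\Ucal$ with the two projections to canonically identify the resulting candidates for $R_S(A)$. Point (ii) is then formal. For (iii), given $f: T \to S$ in $\Scal$ and chosen embeddings $t: T \hookrightarrow Z$ and $s: S \hookrightarrow Y$, the key manoeuvre is to replace $t$ by the graph embedding $T \hookrightarrow Z \times Y$ via $(t, s \circ f)$, so that $f$ is covered by the projection $Z \times Y \to Y$, which lies in $\Ucal$; the commutation of $R|_\Ucal$ with this projection, combined with the identifications coming from $s_*$ and $t_*$, then yields the required natural isomorphism $R_T f^* \cong f^* R_S$. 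An analogous embedding-reduction scheme takes care of the compatibility with closed immersions, thus upgrading $R$ to a localic morphism.

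The hardest part will be step (iii): the careful bookkeeping of the base-change and exchange natural transformations that propagate the commutations from $\Ucal$ to $\Scal$ coherently, and in particular the verification that the coherence data assembled in this way satisfy the relevant cocycle conditions (so that the various choices of embeddings really collapse to a single pseudo-functor). It is precisely for this purpose that the axiomatics of localic fibered categories must be strengthened with respect to the original framework of \cite{Ter23Fact}, as alluded to in the introduction.
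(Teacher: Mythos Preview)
Your strategy matches the paper's almost exactly: define $\tilde{R}_S(A) := s^* R_Y(s_* A)$ for an embedding $(Y;s)$, compare two embeddings via the diagonal into the product, and build the transition isomorphism along $f: T \to S$ by factoring $s \circ f$ through a smooth morphism (your graph $(t, s\!\circ\! f): T \hookrightarrow Z \times Y$ is exactly an object of the paper's factorization category $\Fact_{\Scal}(s\!\circ\! f)$). The paper carries out precisely these steps, with the bulk of the work being the diagram chases you anticipate in (iii).

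Two points deserve correction. First, the isomorphism $s_* R_S \cong R_Y s_*$ is \emph{not} part of the definition of ``localic morphism'' here (that definition only asks for commutation with $p_\#$ for smooth $p$); rather, it is a consequence of the localization triangle. For uniqueness you do not even need it: the composite $s^* R_Y(s_* A) \xrightarrow{\theta_s} R_S(s^* s_* A) \xrightarrow{\epsilon} R_S(A)$ is an isomorphism directly, since $\theta_s$ is the given transition isomorphism and $s_*$ is fully faithful.

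Second, and more substantively, your final sentence misidentifies both what ``localic'' means for the extension and how it is proved. Localicness concerns commutation with $p_\#$ for smooth $p$, not with closed immersions; and the paper does \emph{not} prove it by another embedding-reduction. Instead, it runs an induction on the length of a stratification of $S$ by locally closed pieces in $\Ucal$, using the localization triangle to reduce to the open part (where $S$ is in $\Ucal$) and the closed part (where the inductive hypothesis applies). This induction is exactly why the paper needs the extra axiom that every object of $\Scal$ admits such a stratification (\Cref{hyp:open}(iii)); so the ``strengthening'' you allude to is used here, not in the cocycle verifications of step (iii).
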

\noindent
The proof is based on some auxiliary constructions that exploit the functoriality properties of closed embeddings into $\Ucal$ in a canonical and explicit way. 

Following the same route, we obtain analogous results in the setting of monoidal structures. Again, our constructions only apply to those monoidal structures on localic $\Scal$-fibered categories which satisfy the so-called \textit{projection formulae} with respect to smooth morphisms as formulated in \cite{Ayo07a}: following \cite[\S~5]{Ter23Fact}, we call them \textit{localic tensor structures}. Our second main result can be stated as follows:

\begin{thm*}[\Cref{thm:ext-otimes}, \Cref{prop:asso_int-ext}, \Cref{prop:comm_int-ext}, \Cref{prop:ext-unit}, \Cref{lem:ac-comp_int_ext}]
	Let $\Hbb$ be a localic $\Scal$-fibered category equipped with a localic (unitary, symmetric, associative) monoidal structure. Then the monoidal structure on $\Hbb$ is completely determined by its restriction to $\Ucal$. Conversely, any localic (unitary, symmetric, associative) monoidal structure on the underlying $\Ucal$-fibered category uniquely extends to the whole of $\Scal$.
\end{thm*}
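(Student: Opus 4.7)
The strategy is to bootstrap the statement from \Cref{thm_ext} by viewing a monoidal structure as a package of morphisms of $\Scal$-fibered categories (the tensor bifunctor and the unit) together with natural transformations between composites thereof (the associativity, commutativity and unit constraints), all subject to coherence axioms. The first task is to extend each piece of data individually; the second is to promote the coherence axioms holding over $\Ucal$ to axioms over $\Scal$.

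For the tensor bifunctor, I would interpret $\otimes$ as a morphism of $\Scal$-fibered categories $\Hbb \times_\Scal \Hbb \to \Hbb$, whose source is the fiber product whose fiber over $S$ is $\Hbb(S) \times \Hbb(S)$ with componentwise transition functors. The projection formulae built into the definition of a localic tensor structure should translate precisely into the assertion that this morphism is localic in a mild two-variable enlargement of the setting of \Cref{thm_ext}. Applying that theorem then yields a unique extension of $\otimes$ from $\Ucal$ to the whole of $\Scal$. For the unit object, set $\unit_S := s^*\unit_Y$ for some chosen closed embedding $s: S \hookrightarrow Y$ with $Y \in \Ucal$; independence from the choice of embedding follows by dominating any two embeddings through a common refinement (using the graph construction allowed by the axioms on $\Ucal$) and invoking the compatibility of the unit with smooth morphisms already assumed on $\Ucal$.

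For the structural constraints, each is a natural transformation between two parallel morphisms of $\Scal$-fibered categories built from the already-extended tensor bifunctor and unit. I would apply the uniqueness part of \Cref{thm_ext} in the following guise: given two parallel localic morphisms, a natural transformation between their restrictions to $\Ucal$ extends uniquely to $\Scal$. The existence step produces the associator, commutator and unitor on $\Scal$; the coherence axioms (pentagon, triangle, hexagon) become equalities between natural transformations holding over $\Ucal$ by hypothesis, and the uniqueness of extension forces them to hold over the whole of $\Scal$. The same uniqueness argument automatically yields the converse statement: two localic monoidal structures whose restrictions to $\Ucal$ coincide must coincide everywhere.

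The main obstacle will be twofold. First, one must verify that the two-variable tensor bifunctor genuinely fits into the framework of \Cref{thm_ext}, possibly through an explicit two-variable version whose proof mirrors the single-variable case but requires re-examining the auxiliary constructions based on embeddings. Second, one must check that the extended tensor structure is still \emph{localic}, i.e.\ satisfies the projection formulae against smooth morphisms in $\Scal$ and not only in $\Ucal$; this should reduce to the hypothesized case by the following device: for a smooth $f: S' \to S$ and an embedding $s: S \hookrightarrow Y$ with $Y \in \Ucal$, one completes to a cartesian square involving an embedding $s': S' \hookrightarrow Y'$ and a smooth morphism $Y' \to Y$, so that the projection formula over $\Scal$ is forced by the one already available over $\Ucal$. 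Once these compatibilities are in place, assembling the full package of associative, commutative, unitary data from the individual extensions becomes a matter of invoking the uniqueness statement repeatedly.
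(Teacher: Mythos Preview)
Your overall strategy is close to the paper's, which indeed views the tensor extension as a variant of \Cref{thm_ext} and treats the constraints by explicit constructions via embeddings. However, there is a genuine gap in your reduction of the tensor bifunctor to \Cref{thm_ext}.

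The issue is that $\otimes$ is \emph{not} a localic morphism $\Hbb \times_{\Scal} \Hbb \to \Hbb$ in the sense required by \Cref{thm_ext}. Unwinding the definition, that would demand an isomorphism $p_{\#}(A \otimes_P B) \xrightarrow{\sim} p_{\#} A \otimes_S p_{\#} B$ for every smooth $p$, which is false in general. The projection formulae only give the \emph{one-sided} compatibilities $p_{\#}(A \otimes p^* B) \cong p_{\#} A \otimes B$ and $p_{\#}(p^* A \otimes B) \cong A \otimes p_{\#} B$. This is precisely why the paper does not invoke \Cref{thm_ext} as a black box but instead ``adapts'' its proof: the key invertibility checks (the analogues of \Cref{lem_emb}(1) and \Cref{lem_prep-thm-mor-ext}) are carried out \emph{one variable at a time}, using the one-sided projection formula together with the localization triangle to kill the relevant $s'^{*}(u_{\#}(-) \otimes -)$ terms. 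Your ``mild two-variable enlargement'' therefore cannot be a cosmetic change; it must genuinely rework the argument to exploit only one-sided projection formulae.

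Two smaller points. First, for the constraints you invoke an extension principle for natural transformations between parallel localic morphisms; the paper explicitly declines to formulate such a result (see the remark after the definition of $2$-morphisms) and instead constructs each of $\tilde a$, $\tilde c$, $\tilde u$ by hand via embeddings, then checks the coherence axioms directly. Your approach is cleaner in principle, but you would still have to prove that extension-of-$2$-morphisms result, and the composites you want to compare (e.g.\ $(-\otimes-)\otimes-$) again fail to be localic as morphisms $\Hbb^3 \to \Hbb$, so the same one-variable-at-a-time subtlety recurs. Second, your argument for preserving the localic property of the extended tensor structure (completing to a Cartesian square over $\Ucal$) does not work as stated: an embedding $s: S \hookrightarrow Y$ does not let you compare $p_{\#}$ over $S$ with anything over $Y$. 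The paper instead uses the stratification hypothesis (\Cref{hyp:open}(iii)) and inducts on the length of a stratification, reducing via localization triangles to the case $S \in \Ucal$, exactly as in Step~2 of the proof of \Cref{thm_ext}.
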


Finally, we combine the two extension methods just discussed to study monoidality of localic morphisms between localic $\Scal$-fibered categories in a similar way. Our third main result can be stated as follows:

\begin{thm*}[\Cref{thm_ext-rho}, \Cref{lem:ext-acu-rho}]
	Let $\Hbb_1$ and $\Hbb_2$ be two localic $\Scal$-fibered categories equipped with localic monoidal structures, and let $R: \Hbb_1 \rightarrow \Hbb_2$ be a localic morphism equipped with a monoidal structure (with respect to the given monoidal structures on $\Hbb_1$ and $\Hbb_2$). Then the monoidal structure on $R$ is completely determined by its restriction to $\Ucal$. Conversely, any monoidal structure on the underlying morphism of $\Ucal$-fibered categories extends uniquely to the whole of $\Scal$.
\end{thm*}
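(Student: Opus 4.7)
The overall strategy mirrors the two previous extension theorems: the monoidal structure on $R$ is a natural isomorphism $m_R: R(A)\otimes R(B) \xrightarrow{\sim} R(A\otimes B)$ subject to the acu axioms, and we want to show that for every $S\in\Scal$ this datum is forced by, and can be reconstructed from, the corresponding datum over objects of $\Ucal$. The key point is that all three layers of structure involved — the morphism $R$, the tensor product, and the compatibility between them — have already been shown to admit canonical extension procedures. The plan is to combine these procedures into a single formula and then check that the resulting natural transformation satisfies the required coherences.

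For uniqueness, I would fix $A,B \in \Hbb_1(S)$ together with a closed embedding $s: S \hookrightarrow Y$ with $Y\in\Ucal$, whose existence is guaranteed by the assumptions on $\Ucal$. Since $s_*$ is fully faithful and $R$ commutes with $s^*$ up to the structural isomorphisms of a morphism of fibered categories, we have canonical identifications $R(A) = s^*R(s_*A)$ and $R(B) = s^*R(s_*B)$. Since the monoidal structure is localic, $s^*$ is monoidal, so $R(A)\otimes R(B) \simeq s^*\bigl(R(s_*A)\otimes R(s_*B)\bigr)$. On the other side, \Cref{thm:ext-otimes} expresses $A\otimes_S B$ and $R(A\otimes B)$ as $s^*$ of the corresponding construction applied to $s_*A, s_*B$. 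Chasing through these identifications, the map $m_R$ at $(A,B)$ over $S$ is forced to equal $s^*$ applied to $m_R$ at $(s_*A,s_*B)$ over $Y$, which establishes uniqueness.

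For existence, I would define $m_R^S(A,B)$ by precisely this formula and verify in turn: (i) independence of the choice of $s$, obtained by the standard comparison argument using a second embedding $s': S\hookrightarrow Y'$ and the diagonal embedding into $Y\times Y'$, exactly as in the uniqueness step of \Cref{thm_ext}; (ii) naturality in $A,B$, which is immediate from naturality over $\Ucal$ and functoriality of $s^*,s_*$; and (iii) compatibility with an arbitrary morphism $f: T\rightarrow S$ in $\Scal$, which follows by embedding $f$ into a morphism of $\Ucal$-objects (or, more precisely, by choosing compatible embeddings of $T$ and $S$ and comparing base-change morphisms), reducing the check to the corresponding statement over $\Ucal$ that holds by hypothesis.

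The acu axioms — associativity, commutativity, and unitality of $m_R$ — are then verified by pulling back the corresponding commutative diagrams over $Y$ via $s^*$, using that the constraints over $S$ have themselves been obtained by this same pullback procedure in \Cref{prop:asso_int-ext}, \Cref{prop:comm_int-ext}, \Cref{prop:ext-unit}; this is the content of \Cref{lem:ext-acu-rho}. The main technical obstacle I anticipate is step (iii): verifying naturality of the extended $m_R$ with respect to a morphism $f: T\to S$ where neither $T$ nor $S$ lies in $\Ucal$. This requires a careful diagram chase combining the extension formulae for $R$, for $\otimes$, and for $m_R$ simultaneously, and one must invoke the compatibility between closed-immersion and smooth-morphism base-change data that underlies the localic axiomatics from \cite{Ter23Fact}. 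Once this step is carried out, the remaining verifications reduce to formal consequences of the already-established extension theorems.
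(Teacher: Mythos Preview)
Your proposal is correct and follows essentially the same approach as the paper: define $\tilde\rho_S$ via an embedding $(Y;s)\in\Emb_{\Ucal}(S)$ as $s^*$ applied to $\rho_Y$ at $(s_*A,s_*B)$ (with the requisite identifications), prove independence of the embedding via the diagonal-product trick, and reduce the acu axioms to those over $\Ucal$ by the same pullback mechanism. The one place where the paper is more precise than your sketch is step~(iii): rather than loosely ``choosing compatible embeddings of $T$ and $S$'', the paper fixes $(Y;s)\in\Emb_{\Ucal}(S)$ and then a factorization $(X;t,p)\in\Fact_{\Scal}(s\circ f)$, which automatically yields $(X;t)\in\Emb_{\Ucal}(T)$ together with the smooth morphism $p:X\to Y$ needed to compare the two sides; the large diagram chase you anticipate is carried out explicitly in this setup.
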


We give complete proofs in the case of morphisms of $\Scal$-fibered categories, which already contains all the essential ingredients. On the other hand, we prefer to omit the proofs in the case of monoidal structures on single fibered categories, since they are just notationally more intricate variants of the previous ones; we give more details in the case of associativity, commutativity and unit constraints, and also in the case of monoidal structures on morphisms.

\subsection*{Structure of the paper}

Throughout the paper, we work over a fixed small category $\Scal$; at the beginning of each section, we specify the natural conditions that $\Scal$ has to satisfy in order to make our constructions possible.

In \Cref{sect:ext-sect}, after recalling our basic conventions and notation about $\Scal$-fibered categories from \cite{Ter23Fib}, we introduce the main working hypotheses on $\Scal$ (\Cref{hyp:Fact}) following the approach of \cite[\S\S~2, 3]{Ter23Fact}, and we define the full subcategory $\Ucal$ accordingly (\Cref{nota:Ucal}); we also spell out the minimal set of properties of $\Ucal$ needed in order for our embedding method to be applied successfully (\Cref{lem:Ucal}). We then establish a first extension result concerning sections of $\Scal$-fibered categories (\Cref{prop:ext-sect}); the overall structure of our extension method is already visible in the proof of this result, which however is technically simpler compared to those for morphisms of $\Scal$-fibered categories and for monoidal structures presented in the subsequent sections.

Starting from \Cref{sect:ext-mor}, we need to introduce more assumptions on the base category $\Scal$ (\Cref{hyp:open}) as well as more restrictive hypotheses on the $\Scal$-fibered categories considered: the natural setting for our results is that of \textit{localic $\Scal$-fibered categories} and \textit{localic morphisms} of such, as axiomatized in \cite[\S~3]{Ter23Fact}. Our first main result (\Cref{thm_ext}) asserts that every localic morphisms between localic $\Scal$-fibered categories is canonically determined by its restriction to $\Ucal$ and that, conversely, any such morphism over $\Ucal$ uniquely extends to the whole of $\Scal$. We also discuss how this extension result applies to images of sections along morphisms of $\Scal$-fibered categories (\Cref{lem:ext-sect-comp}).

In \Cref{sect_ext-boxtimes} we explain how to extend monoidal structures on $\Scal$-fibered categories in a similar fashion, using the language of \textit{internal tensor structures} introduced in \cite{Ter23Fib}. Again, we need to restrict ourselves to the setting of localic $\Scal$-fibered categories and only consider \textit{localic internal tensor structures} as done in \cite[\S~5]{Ter23Fact}. We obtain the expected extension result (\Cref{thm:ext-otimes}) by suitably adapting our construction for morphisms of $\Scal$-fibered categories to the new setting. We then complete the discussion by explaining how to extend associativity constraints (\Cref{prop:asso_int-ext}), commutativity constraints (\Cref{prop:comm_int-ext}) and unit constraints (\Cref{prop:ext-unit}) in such a way that their natural mutual compatibility conditions are respected (\Cref{lem:ac-comp_int_ext}).

In the final \Cref{sect:ext-rho} we study monoidality of morphisms between monoidal $\Scal$-fibered categories in a similar way; as usual, we consider the setting of localic morphisms between localic $\Scal$-fibered categories. In the first place, we prove a general extension result for internal tensor structure on morphisms (\Cref{thm_ext-rho}). Then, we check that this operation respects the natural notions of associativity, symmetry and unitarity for such monoidal structures (\Cref{lem:ext-acu-rho}). 

\subsection*{Acknowledgments}

The contents of this paper correspond to the third chapter of my Ph.D. thesis, written at the University of Freiburg under the supervision of Annette Huber-Klawitter. It is a pleasure to thank her for many useful discussions around the subject of this article, as well as for her constant support and encouragement.

\section*{Notation and conventions}

\begin{itemize}
	\item Unless otherwise dictated, categories are assumed to be small with respect to some fixed universe.
	\item Given a category $\Ccal$, the notation $C \in \Ccal$ means that $C$ is an object of $\Ccal$.
	\item Given categories $\Ccal_1, \dots, \Ccal_n$, we let $\Ccal_1 \times \cdots \times \Ccal_n$ denote their direct product category.
\end{itemize}

\section{Extending sections of fibered categories}\label{sect:ext-sect}

Throughout this paper, we work over a fixed small category $\Scal$ admitting finite products; we let $\ast$ denote the final object of $\Scal$. 

In the first part of the present section, while recalling our general conventions and notation about $\Scal$-fibered categories, we introduce a convenient set of axioms on $\Scal$ mimicking certain natural properties of quasi-projective algebraic varieties over a field; all our extension results will be stated uniformly in terms of this axiomatic framework. In the second part of the section, we describe a first example of the extension procedure in the case of sections of $\Scal$-fibered categories.

\subsection{Recollections and notation}

Following the approach of the companion paper \cite{Ter23Fact}, we introduce suitable notions of smooth morphisms and closed immersions in $\Scal$ mimicking the properties of the usual classes of smooth morphisms and closed immersions between quasi-projective algebraic varieties over a field; in particular, this will allow us to factor morphisms of $\Scal$ into closed immersions and smooth morphisms. To be precise, we will work under the following hypothesis, which is \cite[Hyp.~2.1]{Ter23Fact}:

\begin{hyp}\label{hyp:Fact}
	We assume to be given two (possibly non-full) subcategories $\Scal^{sm}$ and $\Scal^{cl}$ of $\Scal$ satisfying the following conditions:
	\begin{enumerate}
		\item[(i)] Both $\Scal^{sm}$ and $\Scal^{cl}$ have the same objects as $\Scal$ and contain all isomorphisms of $\Scal$. 
		\item[(ii)] Let $p: P \rightarrow S$ be a morphism in $\Scal^{sm}$. Then, for every morphism $f: T \rightarrow S$ in $\Scal$, the fibered product $P_T := P \times_S T$ exists in $\Scal$. Moreover, for every Cartesian square of the form
		\begin{equation*}
			\begin{tikzcd}
				P_T \arrow{r}{f'} \arrow{d}{p'} & P \arrow{d}{p} \\
				T \arrow{r}{f} & S
			\end{tikzcd}
		\end{equation*} 
		with $p$ in $\Scal^{sm}$, the morphism $p'$ belongs to $\Scal^{sm}$ as well. 
		\item[(iii)] For every Cartesian square in $\Scal$ of the form
		\begin{equation*}
			\begin{tikzcd}
				P_Z \arrow{r}{z'} \arrow{d}{p'} & P \arrow{d}{p} \\
				Z \arrow{r}{z} & S
			\end{tikzcd}
		\end{equation*}
		with $p$ in $\Scal^{sm}$ and $z$ in $\Scal^{cl}$, the morphism $z'$ belongs to $\Scal^{cl}$ as well. Moreover, for every commutative diagram in $\Scal$ of the form
		\begin{equation*}
			\begin{tikzcd}
				Z \arrow{r}{z} \arrow[bend right]{dr}{gz} & S \arrow{d}{g} \\
				& V
			\end{tikzcd}
		\end{equation*}
		with $gz$ in $\Scal^{cl}$, the morphism $z$ belongs to $\Scal^{cl}$ as well.
		\item[(iv)] Every morphism $f: T \rightarrow S$ in $\Scal$ admits a factorization of the form
		\begin{equation*}
			f: T \xrightarrow{t} P \xrightarrow{p} S
		\end{equation*}
		with $t$ in $\Scal^{cl}$ and $p$ in $\Scal^{sm}$.
	\end{enumerate}
	We say that a morphism in $\Scal$ is a \textit{smooth morphism} if it belongs to $\Scal^{sm}$ and a \textit{closed immersion} if it belongs to $\Scal^{cl}$.
\end{hyp}

As done in \cite[Not.~2.7]{Ter23Fact}, we are naturally led to introduce, for every morphism $f: T \rightarrow S$ in $\Scal$, the associated \textit{factorization category} $\Fact_{\Scal}(f)$ where
\begin{itemize}
	\item objects are triples $(P;t,p)$ where $P$ is an object of $\Scal$, $t: T \rightarrow P$ is a closed immersion and $p: P \rightarrow S$ is a smooth morphism such that $p \circ t = f$,
	\item a morphism $q: (P';t',p') \rightarrow (P;t,p)$ is the datum of a smooth morphism $q: P' \rightarrow P$ such that $q \circ t' = t$ and $p \circ q = p'$.
\end{itemize}
By \cite[Lemma~2.8]{Ter23Fact}, the category $\Fact_{\Scal}(f)$ is non-empty and connected. More precisely, every two objects $(P;t,p)$ and $(P';t',p')$ in $\Fact_{\Scal}(f)$ are dominated by a third one $(P'';t'',p'')$: just take $P'' = P \times_S P'$ with the obvious morphisms $t'': T \rightarrow P''$ and $p'': P'' \rightarrow S$.

The general purpose of the present paper is to explain how many natural constructions on $\Scal$-fibered categories can be canonically recovered from their restriction to a suitable subcategory $\Ucal$ of $\Scal$ playing the role of the subcategory of smooth objects in $\Scal$. The natural choice seems the following:

\begin{nota}\label{nota:Ucal}
	With respect to the subcategory $\Scal^{sm}$ introduced in \Cref{hyp:Fact}, we let $\Ucal$ denote the full subcategory of $\Scal$ collecting all those objects $Y \in \Scal$ such that the canonical morphism to the final object $Y \rightarrow \ast$ belongs to $\Scal^{sm}$.
\end{nota}

The basic stability properties of $\Ucal$ inside of $\Scal$ can be summarized as follows:

\begin{lem}\label{lem:Ucal}
	With the above notation, the following statements hold:
	\begin{enumerate}
		\item $\Ucal$ is closed under finite products in $\Scal$. 
		\item For every $Y \in \Ucal$ and every smooth morphism $p: X \rightarrow Y$ we have $X \in \Ucal$.
		\item For every $S \in \Scal$ there exists a closed immersion $s: S \rightarrow Y$ with $Y \in \Ucal$.
	\end{enumerate}
\end{lem}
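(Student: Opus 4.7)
The plan is to verify each item in turn by directly unpacking the definition of $\Ucal$ against \Cref{hyp:Fact}. Since $\Ucal$ is defined by the property that the structural morphism to $\ast$ lies in $\Scal^{sm}$, each statement reduces to a purely categorical check about how $\Scal^{sm}$ and $\Scal^{cl}$ behave under products, compositions and the factorization axiom.

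For (1), the empty product $\ast$ lies in $\Ucal$ because $\id_\ast$ is an isomorphism and therefore belongs to $\Scal^{sm}$ by axiom (i). For the binary case, given $Y_1, Y_2 \in \Ucal$, I would realize the product $Y_1 \times Y_2$ as the fibered product $Y_1 \times_\ast Y_2$, which exists by axiom (ii) since $Y_1 \to \ast$ is smooth. The projection $\pi_2 \colon Y_1 \times_\ast Y_2 \to Y_2$ is then the base change of the smooth morphism $Y_1 \to \ast$ along $Y_2 \to \ast$, so it belongs to $\Scal^{sm}$ by the second assertion of axiom (ii). Composing $\pi_2$ with $Y_2 \to \ast$, both of which lie in the subcategory $\Scal^{sm}$, yields the structural morphism of $Y_1 \times Y_2$, which is therefore smooth. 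A straightforward induction extends the argument to any finite number of factors.

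For (2), if $Y \in \Ucal$ and $p \colon X \to Y$ is smooth, then the structural morphism $X \to \ast$ factors as $X \xrightarrow{p} Y \to \ast$; both factors lie in $\Scal^{sm}$ by hypothesis, so their composite does as well because $\Scal^{sm}$ is a subcategory. For (3), I would apply the factorization axiom (iv) to the structural morphism $S \to \ast$: this produces a factorization $S \xrightarrow{s} Y \xrightarrow{p} \ast$ with $s$ a closed immersion and $p$ smooth, so that $Y \in \Ucal$ and $s$ is the required closed embedding. None of these steps is genuinely difficult; the only point requiring a moment's care is the reformulation of the binary product as a fibered product over $\ast$ in (1), which is needed to invoke the base-change part of axiom (ii).
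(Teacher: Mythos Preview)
Your proof is correct and follows essentially the same approach as the paper: in each part you invoke exactly the same axiom from \Cref{hyp:Fact} that the paper does, with the same structure (base-change for the projection in (1), closure under composition for (2), and the factorization axiom applied to $S \to \ast$ for (3)). Your version is slightly more thorough in that you explicitly handle the empty product and the inductive step in (1), but this is just added care rather than a different argument.
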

\begin{proof}
	\begin{enumerate}
		\item Given two objects $Y, Y' \in \Ucal$, form the Cartesian diagram in $\Scal$
		\begin{equation*}
			\begin{tikzcd}
				Y \times Y' \arrow{r} \arrow{d} & Y \arrow{d} \\
				Y' \arrow{r} & \ast.
			\end{tikzcd}
		\end{equation*} 
	    Since $Y' \in \Ucal$, by \Cref{hyp:Fact}(ii), the projection $Y \times Y' \rightarrow Y$ belongs to $\Scal^{sm}$. Since moreover $Y \in \Ucal$, the composite map $Y \times Y' \rightarrow Y \rightarrow \ast$ belongs to $\Scal^{sm}$ as well.
	    \item This is clear.
	    \item This follows applying \Cref{hyp:Fact}(iv) to the unique morphism $S \rightarrow \ast$.
	\end{enumerate}
\end{proof}

A further list of hypotheses about $\Scal$ and $\Ucal$, which we will need from \Cref{sect:ext-mor} on, is spelled out in detail in \Cref{hyp:open}.

\begin{rem}\label{rem:Usmcl}
	Note that the small category $\Ucal$ of \Cref{nota:Ucal}, together with its natural subcategories $\Ucal^{sm} := \Ucal \cap \Scal^{sm}$ and $\Ucal^{cl} := \Ucal \cap \Scal^{cl}$, satisfies again the conditions of \Cref{hyp:Fact}: indeed, (i) follows from the fullness of $\Ucal$ in $\Scal$; (ii) and (iii) both follow from from the fact that, given two smooth morphisms in $\Ucal$, their fibered product in $\Scal$ belongs in fact to $\Ucal$ by \Cref{lem:Ucal}(2); finally, (iv) follows from the fact that, for every morphism $f: T \rightarrow S$ in $\Ucal$ and every factorization $(X;t,p) \in \Fact_{\Scal}(f)$, the object $X \in \Scal$ belongs in fact to $\Ucal$, once again by \Cref{lem:Ucal}(2). 
\end{rem}

\begin{ex}\label{ex:VarSm}
	As in \cite{Ter23Fact}, we are interested in applying \Cref{hyp:Fact} when $\Scal = \Var_k$ is the category of quasi-projective algebraic varieties over a field $k$, and $\Scal^{sm}$ and $\Scal^{cl}$ denote the usual subcategories of smooth morphisms and closed immersions, respectively. In this setting, we want to take $\Ucal = \Sm_k$ to be the category of smooth quasi-projective $k$-varieties.
\end{ex}

Throughout this section, we fix an \textit{$\Scal$-fibered category} $\Hbb$: as in \cite[\S~1]{Ter23Fib}, by this we mean the datum of
\begin{itemize}
	\item for every $S \in \Scal$, a category $\Hbb(S)$,
	\item for every morphism $f: T \rightarrow S$ in $\Scal$, a functor 
	\begin{equation*}
		f^*: \Hbb(S) \rightarrow \Hbb(T),
	\end{equation*}
    called the \textit{inverse image functor} along $f$,
	\item for every pair of composable morphisms $f: T \rightarrow S$ and $g: S \rightarrow V$, a natural isomorphism of functors $\Hbb(V) \rightarrow \Hbb(T)$
	\begin{equation*}
		\conn = \conn_{f,g}: (gf)^* A \xrightarrow{\sim} f^* g^* A
	\end{equation*}
	called the \textit{connection isomorphism} at $(f,g)$
\end{itemize}
such that the following conditions are satisfied:
\begin{enumerate}
	\item[($\Scal$-fib-0)] For every $S \in \Scal$, we have $\id_S^* = \id_{\Hbb(S)}$.
	\item[($\Scal$-fib-1)] For every triple of composable morphisms $f: T \rightarrow S$, $g: S \rightarrow V$ and $h: V \rightarrow W$ in $\Scal$, the diagram of functors $\Hbb(W) \rightarrow \Hbb(T)$
	\begin{equation*}
		\begin{tikzcd}
			(hgf)^* A \arrow{rr}{\conn_{f,hg}} \arrow{d}{\conn_{gf,h}} && f^* (hg)^* \arrow{d}{\conn_{g,h}} A \\
			(gf)^* h^* A \arrow{rr}{\conn_{f,g}} && f^* g^* h^* A
		\end{tikzcd}
	\end{equation*}
	is commutative.
\end{enumerate}

\begin{rem}
	By \cite[Lemma.~1.3]{Ter23Fib}, axiom ($\Scal$-fib-1) implies that connection isomorphisms in $\Hbb$ can be treated as equalities. In order to simplify notation, starting from the next subsection we will write all connection isomorphisms with an equality sign; also, we will use the result of \cite[Lemma.~1.3]{Ter23Fib} in the proofs of various results in the course of this paper.
\end{rem}

As in \cite[\S~1]{Ter23Fib}, by a \textit{section} $A$ of $\Hbb$ over $\Scal$ we mean the datum of
\begin{itemize}
	\item for every $S \in \Scal$, an object $A_S \in \Hbb(S)$,
	\item for every morphism $f: T \rightarrow S$ in $\Scal$, an isomorphism in $\Hbb(T)$
	\begin{equation*}
		A^* = A_f^*: f^* A_S \xrightarrow{\sim} A_T
	\end{equation*}
\end{itemize}
satisfying the following condition:
\begin{enumerate}
	\item[($\Scal$-sect)] For every pair of composable morphisms $f: V \rightarrow T$ and $g: T \rightarrow S$ in $\Scal$, the diagram in $\Hbb(V)$
	\begin{equation*}
		\begin{tikzcd}
			f^* g^* A_S \arrow{r}{A_g^*} \arrow{d}{\conn_{f,g}} & f^* A_T \arrow{d}{A_f^*} \\
			(gf)^* A_S \arrow{r}{A_{gf}^*} & A_V \\
		\end{tikzcd}
	\end{equation*}
	is commutative.
\end{enumerate}
Similarly, by a \textit{morphism of sections} $w: A \rightarrow B$ of $\Hbb$ over $\Scal$ we mean the datum of
\begin{itemize}
	\item for every $S \in \Scal$, a morphism in $\Hbb(S)$
	\begin{equation*}
		w_S: A_S \rightarrow B_S
	\end{equation*}
\end{itemize}
satisfying the following condition:
\begin{enumerate}
	\item[(mor-$\Scal$-sect)] For every morphism $f: T \rightarrow S$ in $\Scal$, the diagram in $\Hbb(T)$
	\begin{equation*}
		\begin{tikzcd}
			f^* A_S \arrow{r}{w_S} \arrow{d}{A^*_f} & f^* B_S \arrow{d}{B^*_f} \\
			A_T \arrow{r}{w_T} & B_T
		\end{tikzcd}
	\end{equation*}
	is commutative.
\end{enumerate}
Note that the $\Scal$-fibered category $\Hbb$ defines by restriction a $\Ucal$-fibered category, which we still write simply as $\Hbb$ by abuse of notation. Similarly, every section of $\Hbb$ over $\Scal$ defines a section of $\Hbb$ over $\Ucal$ by restriction; the same applies to morphisms of sections.

\subsection{Extension of sections}

We can now show that the restriction operation from $\Scal$ to $\Ucal$ for sections of $\Hbb$ is reversible in a canonical way. Here is the precise statement:

\begin{prop}\label{prop:ext-sect}
	Let $A$ be a section of $\Hbb$ over $\Ucal$. Then there exists a unique (up to unique isomorphism) section $\tilde{A}$ of $\Hbb$ over $\Scal$ whose restriction to $\Ucal$ coincides with $A$. Moreover, the assignment $A \rightsquigarrow \tilde{A}$ is compatible with morphisms of section.
\end{prop}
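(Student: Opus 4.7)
The plan is to build the extension by choosing, for each $S \in \Scal$, a closed immersion into a smooth object and transporting the section along it. Concretely, for every $S \in \Scal$ I would invoke \Cref{lem:Ucal}(3) to pick a closed immersion $s \colon S \hookrightarrow Y$ with $Y \in \Ucal$, and set $\tilde{A}_S := s^* A_Y$ (choosing $s = \id_S$ whenever $S \in \Ucal$, so that $\tilde{A}$ restricts strictly to $A$). The first task is to check that this is independent of the choice of $s$ up to canonical isomorphism. The category $\Ical(S)$ whose objects are closed immersions $s \colon S \hookrightarrow Y$ with $Y \in \Ucal$ and whose morphisms $(s' \colon S \hookrightarrow Y') \to (s \colon S \hookrightarrow Y)$ are smooth maps $q \colon Y' \to Y$ with $q \circ s' = s$ coincides with $\Fact_{\Scal}(S \to \ast)$, and is therefore non-empty and connected by \cite[Lemma~2.8]{Ter23Fact}. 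The assignment $(s \colon S \hookrightarrow Y) \mapsto s^* A_Y$ extends to a functor $\Ical(S)^{\mathrm{op}} \to \Hbb(S)$ whose transition isomorphism associated to $q$ is $s^* A_Y = s'^* q^* A_Y \xrightarrow{s'^* A^*_q} s'^* A_{Y'}$, supplied by the section structure of $A$ over $\Ucal$; its functoriality is axiom ($\Scal$-sect) for $A$, and connectedness of $\Ical(S)$ together with the fact that every transition map is invertible yields a coherent system of identifications $s^* A_Y \cong s'^* A_{Y'}$.

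Next, for a morphism $f \colon T \to S$ in $\Scal$ the pullback isomorphism $\tilde{A}^*_f \colon f^* \tilde{A}_S \xrightarrow{\sim} \tilde{A}_T$ is obtained as follows. Having fixed $s \colon S \hookrightarrow Y$ representing $\tilde{A}_S$, I apply \Cref{hyp:Fact}(iv) to $s \circ f \colon T \to Y$ to obtain a factorization $T \xrightarrow{t} Z \xrightarrow{p} Y$ with $t$ a closed immersion and $p$ smooth; by \Cref{lem:Ucal}(2) we have $Z \in \Ucal$, so $t \in \Ical(T)$ computes $\tilde{A}_T$ up to the canonical comparison of the previous paragraph. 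The desired map is then the composite
\[
f^* \tilde{A}_S = f^* s^* A_Y = (pt)^* A_Y = t^* p^* A_Y \xrightarrow{t^* A^*_p} t^* A_Z \xrightarrow{\sim} \tilde{A}_T,
\]
where the middle arrow uses the section structure of $A$ over $\Ucal$. Independence from the factorization $(Z;t,p)$ reduces to connectedness of $\Fact_{\Scal}(s \circ f)$: any two factorizations are dominated by a third, and axiom ($\Scal$-sect) for $A$ over $\Ucal$ forces the two resulting pullback maps to agree. Independence from the choice of $s$ is handled symmetrically by passing through $Y \times Y'$, with a closed immersion $S \hookrightarrow Y \times Y'$ supplied by \Cref{hyp:Fact}(iii) (the composite with $\pi_Y$ being $s$).

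The cocycle axiom ($\Scal$-sect) for $\tilde{A}$ itself is verified by the same compatibility principle: given composable $V \xrightarrow{f} T \xrightarrow{g} S$, one chooses nested factorizations of $s \circ g$ and of the resulting composite through objects of $\Ucal$ and reduces the required identity to ($\Scal$-sect) for $A$ over $\Ucal$. Uniqueness of the extension is immediate: any other extension $B$ must satisfy $B_S \cong s^* B_Y = s^* A_Y$ via $B^*_s$, and compatibility with pullbacks forces this isomorphism to respect the whole structure, yielding a unique isomorphism $B \cong \tilde{A}$. Finally, for a morphism of sections $w \colon A \to B$ over $\Ucal$, the recipe $\tilde{w}_S := s^* w_Y$ is well-defined by a completely analogous argument invoking (mor-$\Scal$-sect) for $w$ along each transition map in $\Ical(S)$, and (mor-$\Scal$-sect) for $\tilde{w}$ reduces to the same axiom for $w$.

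The main obstacle is bookkeeping rather than conceptual: every verification ultimately asks one to show that a diagram of pullbacks of the isomorphisms $A^*_{(-)}$ commutes in $\Hbb(S)$, and all such commutativities dissolve once one plays the connectedness of suitable factorization categories against the single section axiom ($\Scal$-sect) over $\Ucal$. Nothing beyond \Cref{hyp:Fact}, \Cref{lem:Ucal} and the factorization principle is needed, but a disciplined indexing convention --- treating $\{s^* A_Y\}_{(s,Y) \in \Ical(S)}$ as a coherent system from the start rather than making ad hoc choices --- will be essential to keep the calculations readable for the more complex extension results later in the paper.
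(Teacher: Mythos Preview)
Your proposal is correct and follows essentially the same route as the paper: the category you call $\Ical(S)$ is exactly the paper's embedding category $\Emb_{\Ucal}(S)$, and your construction of $\tilde{A}_S$, of the transition isomorphisms, and of $\tilde{A}^*_f$ via factorizations of $s \circ f$ matches the paper's Lemmas preceding the proof step for step. The only cosmetic difference is that the paper packages the coherent system $\{s^* A_Y\}$ as a formal colimit over the associated trivial groupoid $\Gcal_{\Ucal}(S)$ rather than fixing a choice --- precisely the ``disciplined indexing convention'' you recommend in your final paragraph.
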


In order to make our construction precise, it is convenient to introduce some more notation:

\begin{nota}\label{nota:Emb}
	Fix $S \in \Scal$.
	\begin{enumerate}
		\item We introduce the \textit{embedding category} $\Emb_{\Ucal}(S)$ where
		\begin{itemize}
			\item objects are pairs $(Y;s)$ with $Y \in \Ucal$ and $s: S \rightarrow Y$ a closed immersion,
			\item a morphism $p: (Y';s') \rightarrow (Y;s)$ is a smooth morphism $p: Y' \rightarrow Y$ such that $p \circ s' = s$.
		\end{itemize}
		\item We let $\Gcal_{\Ucal}(S)$ denote the associated trivial groupoid: it is the category whose objects are the same as for $\Emb_{\Ucal}(S)$ and where each homomorphism set contains exactly one element.
	\end{enumerate} 
\end{nota}

As a sanity check, let us note the following:

\begin{lem}\label{lem:Emb}
	For every $S \in \Scal$, the category $\Emb_{\Ucal}(S)$ is non-empty and connected.
\end{lem}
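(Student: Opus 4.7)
The plan is to prove non-emptiness directly from \Cref{lem:Ucal}(3), and to prove connectedness by the standard trick of taking fibered products: any two embeddings into objects of $\Ucal$ can both be dominated by a single embedding into their product in $\Ucal$.

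First I would check non-emptiness. Since $S \in \Scal$, \Cref{lem:Ucal}(3) directly provides a closed immersion $s: S \rightarrow Y$ with $Y \in \Ucal$, so $(Y;s)$ is an object of $\Emb_{\Ucal}(S)$.

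Next I would prove connectedness. Given any two objects $(Y;s)$ and $(Y';s')$ of $\Emb_{\Ucal}(S)$, I would form the product $Y \times Y' \in \Scal$ together with the two projections $\pi_1: Y \times Y' \rightarrow Y$ and $\pi_2: Y \times Y' \rightarrow Y'$. By \Cref{lem:Ucal}(1), $Y \times Y'$ belongs to $\Ucal$, and $\pi_1, \pi_2$ are smooth morphisms: indeed, each is a base change of a morphism to the final object $\ast$ (which lies in $\Scal^{sm}$ since $Y, Y' \in \Ucal$) along another such morphism, hence smooth by \Cref{hyp:Fact}(ii). Now the universal property of the product yields a unique morphism $\sigma := (s,s'): S \rightarrow Y \times Y'$ in $\Scal$ satisfying $\pi_1 \circ \sigma = s$ and $\pi_2 \circ \sigma = s'$. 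Since the composition $\pi_1 \circ \sigma = s$ is a closed immersion, the second part of \Cref{hyp:Fact}(iii) implies that $\sigma$ itself is a closed immersion. Therefore $(Y \times Y';\sigma)$ is an object of $\Emb_{\Ucal}(S)$, and the projections $\pi_1$ and $\pi_2$ provide morphisms $(Y \times Y';\sigma) \rightarrow (Y;s)$ and $(Y \times Y';\sigma) \rightarrow (Y';s')$ in $\Emb_{\Ucal}(S)$, thus connecting $(Y;s)$ and $(Y';s')$ by a zig-zag of length two.

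There is no real obstacle here; the only subtle point is the correct invocation of \Cref{hyp:Fact}(iii) to upgrade the pair morphism $\sigma$ to a closed immersion, but the hypothesis is tailored precisely for this sort of argument. The structure of this proof also mirrors the corresponding connectedness argument for $\Fact_{\Scal}(f)$ recalled right after \Cref{hyp:Fact}, which is used implicitly throughout \cite{Ter23Fact}.
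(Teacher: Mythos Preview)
Your proof is correct and follows essentially the same approach as the paper: non-emptiness via \Cref{lem:Ucal}(3), and connectedness by showing that any two objects are dominated by the diagonal into the product, with the relevant properties checked via \Cref{lem:Ucal}(1) and \Cref{hyp:Fact}(ii),(iii). You even spell out the invocation of the second part of \Cref{hyp:Fact}(iii) to see that $\sigma$ is a closed immersion, which the paper leaves implicit.
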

\begin{proof}
	The fact that $\Emb_{\Ucal}(S)$ is non-empty follows from \Cref{lem:Ucal}(3). To see that it is connected, we prove a more precise result, namely: every two objects $(Y;s)$ and $(Y';s')$ are dominated by a third one $(Y'';s'')$. Indeed, it suffices to take $Y'' = Y \times Y'$ (which belongs to $\Ucal$ by \Cref{lem:Ucal}(1)) with the canonical projections to $Y'$ and $Y''$ (which are smooth morphisms by \Cref{hyp:Fact}(ii)) and with the obvious morphism $s'': S \rightarrow Y''$ (which is a closed immersion by \Cref{hyp:Fact}(iii)).
\end{proof}

\begin{rem}
	The definition of $\Ucal$ as in \Cref{nota:Ucal} is adapted to the definition of morphisms in the embedding categories $\Emb_{\Ucal}(S)$ of \Cref{nota:Emb} and, in particular, is motivated by our need to make these categories connected.
	
	However, as the reader can easily observe, the fact that morphisms in $\Emb_{\Ucal}(S)$ are defined by smooth morphisms does not play any role in our constructions in the rest of the paper: it is just a natural choice motivated by concrete geometric situations. If one is willing to relax the notion of morphisms $p: (Y';s') \rightarrow (Y,s)$ in $\Emb_{\Ucal}(S)$ by allowing $p: Y' \rightarrow Y$ to be not necessarily smooth, then one can replace $\Ucal$ by any other full subcategory of $\Scal$ satisfying the conclusions of \Cref{lem:Ucal}.
\end{rem}

The construction described in the following lemma and corollary is the prototype for the main constructions of this paper: the first provides an explicit construction in terms of the embedding category $\Emb_{\Ucal}(S)$, while the second restates the conclusion more canonically in terms of the trivial groupoid $\Gcal_{\Ucal}(S)$.

\begin{lem}\label{lem_sect}
	Fix $S \in \Scal$.
	For every embedding $(Y;s) \in \Emb_{\Ucal}(S)$, set $A_S^{(s)} := s^* A_Y$. Moreover, for every morphism $p: (Y';s') \rightarrow (Y;s)$ in $\Emb_{\Ucal}(S)$, consider the isomorphism in $\Hbb(S)$
	\begin{equation*}
		\alpha_p: A_S^{(s)} \xrightarrow{\sim} A_S^{(s')}
	\end{equation*}
	given by the composite
	\begin{equation*}
		A_S^{(s)} := s^* A_Y = {s'}^* p^* A_Y \xrightarrow{A_p^*} {s'}^* A_{Y'} =: A_S^{(s')}.
	\end{equation*}
	Then the following statements hold:
	\begin{enumerate}
		\item For every pair of composable morphisms $q: (Y'';s'') \rightarrow (Y';s')$ and $p: (Y';s') \rightarrow (Y;s)$ in $\Emb_{\Ucal}(S)$, we have the equality between isomorphisms in $\Hbb(S)$
		\begin{equation*}
			\alpha_{p \circ q} = \alpha_q \circ \alpha_p.
		\end{equation*}
		\item For every choice of embeddings $(Y^{(i)};s^{(i)}) \in \Emb_{\Ucal}(S)$, $i = 1,2$, consider the isomorphism in $\Hbb(S)$
		\begin{equation*}
			\beta_{1,2}: \tilde{A}_S^{(s^{(1)})} \xrightarrow{\alpha_{p^{(1)}}} \tilde{A}_S^{(s^{(12)})} \xrightarrow{\alpha_{p^{(2)}}^{-1}} \tilde{A}_S^{(s^{(2)})}
		\end{equation*}
		where $s^{(12)}: S \rightarrow Y^{(1)} \times Y^{(2)}$ denotes the diagonal morphism (a closed immersion, by \Cref{hyp:Fact}(iii)) while $p^{(i)}: Y^{(1)} \times Y^{(2)} \rightarrow Y^{(r)}$, $i = 1,2$, denote the canonical projections (both smooth morphisms, by \Cref{hyp:Fact}(ii)). Then the following cocycle condition holds: 
		
		For every three given embeddings $(Y^{(i)};s^{(i)}) \in \Emb_{\Ucal}(S)$, $i = 1,2,3$, we have (with obvious notation) the equality between isomorphisms in $\Hbb(S)$
		\begin{equation*}
			\beta_{1,3} = \beta_{2,3} \circ \beta_{1,2}.
		\end{equation*}
	\end{enumerate}
\end{lem}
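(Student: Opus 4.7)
The plan is to handle part (1) as a direct unpacking of definitions, and to reduce part (2) to part (1) by passing to a single common dominating embedding, where the cocycle identity becomes a telescoping cancellation.

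For (1), I would unfold the definition of $\alpha_{p \circ q}$ as the composite
$$s^{*} A_Y \;=\; {s''}^{*} (pq)^{*} A_Y \xrightarrow{\;A^{*}_{pq}\;} {s''}^{*} A_{Y''},$$
treating connection isomorphisms as equalities, as allowed by \cite[Lemma~1.3]{Ter23Fib}. The section compatibility axiom ($\Scal$-sect) applied to the composable pair $(q,p)$ at the section $A$ gives $A^{*}_{pq} = A^{*}_{q} \circ q^{*} A^{*}_{p}$; pulling back along $s''$ and using the equalities $s = p \circ s'$ and $s' = q \circ s''$, one recognises the right-hand side precisely as $\alpha_q \circ \alpha_p$. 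The only routine care needed is to keep track of which pull-back functor each $A^{*}$-isomorphism lives under.

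For (2), which is the conceptually interesting step, I would introduce the \emph{triple dominating embedding}: set $Y^{(123)} := Y^{(1)} \times Y^{(2)} \times Y^{(3)}$, which lies in $\Ucal$ by \Cref{lem:Ucal}(1), and let $s^{(123)}: S \to Y^{(123)}$ be the triple diagonal (a closed immersion by \Cref{hyp:Fact}(iii), since composing with the projection $Y^{(123)} \to Y^{(i)}$ recovers the closed immersion $s^{(i)}$). Let $P^{(i)}: Y^{(123)} \to Y^{(i)}$ and $Q^{(ij)}: Y^{(123)} \to Y^{(i)} \times Y^{(j)}$ denote the canonical projections, all of which are smooth by \Cref{hyp:Fact}(ii). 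These are morphisms in $\Emb_{\Ucal}(S)$, and they satisfy $P^{(i)} = p^{(i)}_{ij} \circ Q^{(ij)}$ for $i \in \{i,j\}$, where $p^{(i)}_{ij}: (Y^{(i)} \times Y^{(j)}; s^{(ij)}) \to (Y^{(i)}; s^{(i)})$ are the pairwise projections. Applying part (1) to these factorisations yields
$$\alpha_{P^{(i)}} \;=\; \alpha_{Q^{(ij)}} \circ \alpha_{p^{(i)}_{ij}}, \qquad \text{so} \qquad \alpha_{p^{(i)}_{ij}} \;=\; \alpha_{Q^{(ij)}}^{-1} \circ \alpha_{P^{(i)}}.$$
Substituting into the definition $\beta_{i,j} = \alpha_{p^{(j)}_{ij}}^{-1} \circ \alpha_{p^{(i)}_{ij}}$, the two factors involving $\alpha_{Q^{(ij)}}$ cancel, leaving the clean reformulation
$$\beta_{i,j} \;=\; \alpha_{P^{(j)}}^{-1} \circ \alpha_{P^{(i)}}.$$
With this in hand, the cocycle condition $\beta_{1,3} = \beta_{2,3} \circ \beta_{1,2}$ is an immediate telescoping cancellation of the middle $\alpha_{P^{(2)}}$-factors.

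The main obstacle will be purely notational: one has to set up the triple product, the triple diagonal and the collection of projections correctly, and verify that each of these objects and morphisms belongs to $\Emb_{\Ucal}(S)$ (which is exactly where \Cref{lem:Ucal} and \Cref{hyp:Fact}(ii)--(iii) get used). Conceptually the argument is formal once part (1) is available, since the key cancellation takes place at the level of a single dominating embedding and uses nothing about $\Hbb$ beyond the fibration axioms.
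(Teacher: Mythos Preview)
Your proposal is correct and follows essentially the same route as the paper: part~(1) is reduced to the section axiom together with the coherence of connection isomorphisms, and part~(2) is handled by passing to the triple product $Y^{(1)} \times Y^{(2)} \times Y^{(3)}$ and invoking part~(1) on the various projection factorisations, which is precisely what the paper's triangular diagram encodes. One tiny notational point: since $A$ is only given as a section over $\Ucal$, the axiom you are invoking in part~(1) is ($\Ucal$-sect) rather than ($\Scal$-sect), though of course the morphisms $p,q$ lie in $\Ucal$ so this is harmless.
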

\begin{proof}
	\begin{enumerate}
		\item We have to show that the outer part of the diagram in $\Hbb(S)$
		\begin{equation*}
			\begin{tikzcd}[font=\small]
				s^* A_Y \arrow[equal]{r} \arrow[equal]{dd} & {s'}^* p^* A_Y \arrow{r}{A^*} & {s'}^* A_{Y'} \arrow[equal]{d} \\
				& {s''}^* q^* p^* A_Y \arrow[equal]{u} \arrow[equal]{dl} \arrow{r}{A^*} & {s''}^* q^* A_Y \arrow{d}{A^*} \\
				{s''}^* (pq)^* A_Y \arrow{rr}{A^*} && {s''}^* A_{Y''}
			\end{tikzcd}
		\end{equation*}
		is commutative. But the top-left piece is commutative by axiom ($\Scal$-fib-1), the upper-right piece is commutative by naturality, and the bottom-right piece is commutative by axiom ($\Ucal$-sect).
		\item By construction, the cocycle condition asks for the commutativity of the outer part of the diagram in $\Hbb(S)$
		\begin{equation*}
			\begin{tikzcd}[font=\small]
				A_S^{(s^{(1)})} \arrow{r}{\alpha} \arrow{dd}{\alpha} & A_S^{(s^{(12)})} \arrow{d}{\alpha} & A_S^{(s^{(2)})} \arrow{l}{\alpha} \arrow{dd}{\alpha} \\
				& A_S^{(s^{(123)})} \\
				A_S^{(s^{(13)})} \arrow{ur}{\alpha} & A_S^{(s^{(3)})} \arrow{l}{\alpha} \arrow{r}{\alpha} & A_S^{(s^{(23)})} \arrow{ul}{\alpha}
			\end{tikzcd}
		\end{equation*}
		where $s^{(123)}: S \hookrightarrow Y_1 \times Y_2 \times Y_3$ denotes the diagonal closed immersion. This follows from the commutativity of the three inner pieces, which is an immediate consequence of the previous point.
	\end{enumerate}
\end{proof} 

\begin{cor}
	With the notation of \Cref{lem_sect}, the object of $\Hbb(S)$
	\begin{equation*}
		\tilde{A}_S := \varinjlim_{(Y;s) \in \Gcal_{\Ucal}(S)} A_S^{(s)}
	\end{equation*}
	exists and is canonically isomorphic to each $A_S^{(s)}$ (compatibly with the transition maps of \Cref{lem_sect}(2)).
\end{cor}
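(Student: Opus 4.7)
The plan is to build an explicit functor $F_S \colon \Gcal_{\Ucal}(S) \to \Hbb(S)$ out of the data furnished by \Cref{lem_sect}, and then to invoke the general fact that a diagram indexed by a nonempty connected groupoid, in which every arrow is sent to an isomorphism, admits a colimit canonically represented by any of its values.

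First, I would set $F_S(Y;s) := A_S^{(s)}$ on objects, and on the unique arrow $(Y;s) \to (Y';s')$ of $\Gcal_{\Ucal}(S)$ I would assign the isomorphism $\beta_{(Y;s),(Y';s')}$ constructed in \Cref{lem_sect}(2). The cocycle identity $\beta_{1,3} = \beta_{2,3} \circ \beta_{1,2}$ is precisely the requirement of preservation of composition. Specializing to the case $(Y^{(1)};s^{(1)}) = (Y^{(2)};s^{(2)}) = (Y^{(3)};s^{(3)})$ gives $\beta_{i,i} = \beta_{i,i} \circ \beta_{i,i}$; since each $\beta$ is invertible (it is a composite of the isomorphisms $\alpha_p$, themselves built from the structural isomorphisms $A^*$ and connection isomorphisms of $\Hbb$), this forces $\beta_{i,i} = \id$, giving preservation of identities. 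So $F_S$ is a well-defined functor whose image consists of isomorphisms.

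Next, by \Cref{lem:Emb}, $\Emb_{\Ucal}(S)$ and therefore $\Gcal_{\Ucal}(S)$ are nonempty and connected; by construction $\Gcal_{\Ucal}(S)$ is a groupoid with exactly one morphism between any two objects. For any such indexing category $\Gcal$ and any functor $F \colon \Gcal \to \Ccal$ taking values in isomorphisms, fix any $x_0 \in \Gcal$: then the pair $(F(x_0), (F(x_0 \to x))_x)$ is a universal cocone, since any cocone $(\psi_x \colon F(x) \to D)_x$ must satisfy $\psi_x = \psi_{x_0} \circ F(x_0 \to x)^{-1}$, so the only possible factorization is $\psi_{x_0}$, and this is manifestly a cocone morphism. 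Applied to $F_S$, this shows that $\tilde{A}_S := \varinjlim F_S$ exists in $\Hbb(S)$ and that the colimit legs $A_S^{(s)} \to \tilde{A}_S$ are isomorphisms intertwining the $\beta$'s, which is exactly the asserted compatibility with the transition maps of \Cref{lem_sect}(2).

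The only nontrivial ingredient is already packaged in \Cref{lem_sect}(2); once the cocycle condition is available, deducing $F_S(\id) = \id$ and concluding via the abstract colimit argument is purely formal. I therefore expect no real obstacle in this corollary beyond spelling out the functoriality of $F_S$ and citing the general fact about colimits over connected groupoids.
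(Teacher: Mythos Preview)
Your proposal is correct and is precisely the unpacking of the paper's one-line proof, which reads in full: ``This follows immediately from \Cref{lem_sect}(2).'' You have simply made explicit the standard argument that the cocycle condition turns the assignment $(Y;s) \mapsto A_S^{(s)}$ into a functor out of the trivial groupoid, whose colimit is then represented by any of its values.
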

\begin{proof}
	This follows immediately from \Cref{lem_sect}(2).
\end{proof}

We want to turn the collection of objects $\left\{\tilde{A}_S\right\}_{S \in \Scal}$ just obtained into a section of $\Hbb$ over $\Scal$. The key idea is to exploit the factorization categories $\Fact_{\Scal}(f)$ in combination with the embedding categories $\Emb_{\Ucal}(S)$ as follows: 

\begin{lem}\label{lem_tildeA-indep}
	Fix a morphism $f: T \rightarrow S$ in $\Scal$. For every embedding $(Y;s) \in \Emb_{\Ucal}(S)$ and every factorization $(X;t,p) \in \Fact_{\Scal}(s \circ f)$, define an isomorphism in $\Hbb(T)$
	\begin{equation*}
		A_f^{*,(s,t,p)}: f^* A_S^{(s)} \xrightarrow{\sim} A_T^{(t)}
	\end{equation*}
	by taking the composite
	\begin{equation*}
		f^* A_S^{(s)} := f^* s^* A_Y = t^* p^* A_Y \xrightarrow{A_p^*} t^* A_X =: A_T^{(t)}.
	\end{equation*}
	Then the induced isomorphism in $\Hbb(T)$
	\begin{equation}\label{tildeA_f^*}
		\tilde{A}^* = \tilde{A}_f^*: f^* \tilde{A}_S = f^* \tilde{A}_S^{(s)} \xrightarrow{A_f^{*,(s,t,p)}} \tilde{A}_T^{(t)} = \tilde{A}_T
	\end{equation}
	is independent of the choice of $(Y;s) \in \Emb_{\Ucal}(S)$ and of $(X;t,p) \in \Fact_{\Scal}(s \circ f)$.
\end{lem}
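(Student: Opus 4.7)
The plan is to prove independence in two steps: first varying the factorization $(X;t,p) \in \Fact_{\Scal}(s \circ f)$ with $(Y;s) \in \Emb_{\Ucal}(S)$ fixed, and then varying $(Y;s)$. Throughout, I treat connection isomorphisms as equalities, per the remark after ($\Scal$-fib-1), and write $\iota^S_{(Y;s)}: A_S^{(s)} \xrightarrow{\sim} \tilde{A}_S$ and $\iota^T_{(X;t)}: A_T^{(t)} \xrightarrow{\sim} \tilde{A}_T$ for the canonical colimit comparison isomorphisms.

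\textbf{Step 1 (factorization independence).} Fix $(Y;s)$ and take two objects $(X;t,p_X), (X';t',p'_X) \in \Fact_{\Scal}(s \circ f)$. By the connectedness of $\Fact_{\Scal}(s \circ f)$ recalled in Section~1.1 (any two factorizations are dominated by their fiber product over $Y$), it suffices to treat the case where there is a morphism $q: (X';t',p'_X) \rightarrow (X;t,p_X)$ in $\Fact_{\Scal}(s \circ f)$, i.e.\ a smooth $q: X' \rightarrow X$ with $q \circ t' = t$ and $p_X \circ q = p'_X$. Since $X, X' \in \Ucal$ by \Cref{lem:Ucal}(2), the map $q$ also defines a morphism $(X';t') \rightarrow (X;t)$ in $\Emb_{\Ucal}(T)$, inducing $\alpha_q: A_T^{(t)} \xrightarrow{\sim} A_T^{(t')}$ with $\iota^T_{(X;t)} = \iota^T_{(X';t')} \circ \alpha_q$. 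The desired agreement then reduces to the identity $\alpha_q \circ A_f^{*,(s,t,p_X)} = A_f^{*,(s,t',p'_X)}$, which unfolds to the equality of $(t')^*A_{p_X \circ q}^*$ with $(t')^*(A_q^* \circ q^* A_{p_X}^*)$; this is exactly axiom ($\Scal$-sect) for the composable pair $q, p_X$, pulled back along $t'$.

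\textbf{Step 2 (embedding independence).} By the product argument in the proof of \Cref{lem:Emb}, it suffices to treat the case of a morphism $p: (Y';s') \rightarrow (Y;s)$ in $\Emb_{\Ucal}(S)$, so that $\iota^S_{(Y;s)} = \iota^S_{(Y';s')} \circ \alpha_p$. Given any $(X;t,p_X) \in \Fact_{\Scal}(s \circ f)$, I build a bridge factorization of $s' \circ f$ by setting $X'' := X \times_Y Y'$: the projections $p_{X''}: X'' \rightarrow Y'$ and $q: X'' \rightarrow X$ are both smooth by \Cref{hyp:Fact}(ii), and the identity $p_X \circ t = s \circ f = p \circ s' \circ f$ yields a unique $t'': T \rightarrow X''$ with $q \circ t'' = t$ and $p_{X''} \circ t'' = s' \circ f$. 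Since $q \circ t'' = t$ is a closed immersion, \Cref{hyp:Fact}(iii) forces $t''$ to be a closed immersion as well, so $(X'';t'',p_{X''}) \in \Fact_{\Scal}(s' \circ f)$. By Step~1, I may compute $\tilde{A}_f^*$ using the pairs $\big((Y;s),(X;t,p_X)\big)$ and $\big((Y';s'),(X'';t'',p_{X''})\big)$; taking into account $\iota^T_{(X;t)} = \iota^T_{(X'';t'')} \circ \alpha_q$, agreement of the two resulting maps reduces to the single identity $\alpha_q \circ A_f^{*,(s,t,p_X)} = A_f^{*,(s',t'',p_{X''})} \circ f^*(\alpha_p)$. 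Both sides unfold to $(t'')^* A_{p_X \circ q}^*$, the two paths around the Cartesian square corresponding to the two applications of ($\Scal$-sect) to the composable pairs $(q, p_X)$ and $(p_{X''}, p)$, whose composites agree because $p_X \circ q = p \circ p_{X''}$.

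\textbf{Main obstacle.} The genuine work is in Step~2: one must juggle the colimit identifications in both $\Emb_{\Ucal}(S)$ and $\Emb_{\Ucal}(T)$ while invoking ($\Scal$-sect) along the two different decompositions of the common map $X'' \rightarrow Y$. Producing the bridge factorization via the Cartesian square and verifying that $t''$ is a closed immersion (an application of the second half of \Cref{hyp:Fact}(iii)) is the geometric crux, and keeping the proliferation of pullback functors legible is the main notational pitfall.
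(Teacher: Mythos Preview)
Your proof is correct and follows essentially the same approach as the paper: both reduce via connectedness of $\Emb_{\Ucal}(S)$ to the case of a morphism $(Y';s') \rightarrow (Y;s)$, and verify the resulting compatibility square using axiom ($\Ucal$-sect) along the two decompositions of the common map $X'' \rightarrow Y$. Your organization into two separate steps---first factorization independence with $(Y;s)$ fixed, then embedding independence via an explicit fiber-product bridge $X'' = X \times_Y Y'$---is slightly more explicit than the paper's single-step reduction (which assumes the existence of a compatible square without constructing it), but the mathematical content is identical.
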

\begin{proof}
	In view of the way the objects $\tilde{A}_S$ and $\tilde{A}_T$ are defined, as using the connectedness of the category $\Emb_{\Ucal}(S)$ via product of varieties (\Cref{lem:Emb}), we are reduced to proving the following claim: Given a morphism $r: (Y';s') \rightarrow (Y;s)$ in $\Emb_{\Ucal}(S)$, a factorization $(X';t',p') \in \Fact_{\Scal}(s' \circ f)$, and a morphism $q: (X';t') \rightarrow (X;t)$ in $\Emb_{\Ucal}(T)$ fitting into a commutative diagram of the form
	\begin{equation*}
		\begin{tikzcd}
			X' \arrow{r}{q} \arrow{d}{p'} & X \arrow{d}{p} \\
			Y' \arrow{r}{r} & Y,
		\end{tikzcd}
	\end{equation*}
	the diagram in $\Hbb(T)$
	\begin{equation*}
		\begin{tikzcd}
			f^* A_S^{(s)} \arrow{rr}{A_f^{*,(s,t,p)}} \arrow{d}{\alpha_r} && A_T^{(t)} \arrow{d}{\alpha_q} \\
			f^* A_S^{(s')} \arrow{rr}{A_f^{*,(s',t',p')}} && A_T^{(t')}
		\end{tikzcd}
	\end{equation*}
	is commutative. Unwinding the various definitions, we obtain the outer part of the diagram
	\begin{equation*}
		\begin{tikzcd}[font=\small]
			f^* s^* A_Y \arrow[equal]{r} \arrow[equal]{dd} & t^* p^* A_Y \arrow{r}{A^*} \arrow[equal]{d} & t^* A_X \arrow[equal]{d} \\
			& {t'}^* q^* p^* A_Y \arrow{r}{A^*} \arrow[equal]{d} & {t'}^* q^* A_X \arrow{dd}{A^*} \\
			f^* {s'}^* r^* A_Y \arrow[equal]{r} \arrow{d}{A^*} & {t'}^* {p'}^* r^* A_Y \arrow{d}{A^*} \\
			f^* {s'}^* A_{Y'} \arrow[equal]{r} & {t'}^* p^* A_{Y'} \arrow{r}{A^*} & {t'}^* A_{X'}
		\end{tikzcd}
	\end{equation*}
	where the upper-left piece is commutative by \cite[Lemma~1.3]{Ter23Fib}, the upper-right and lower-left squares are commutative by naturality, and the lower-right piece is commutative as a consequence of axiom ($\Ucal$-sect). This proves the claim.
\end{proof}

\begin{proof}[Proof of \Cref{prop:ext-sect}]
	For sake of clarity, we divide the proof into two main steps:
	\begin{enumerate}
		\item[(Step 1)] In the first place, we claim that the natural isomorphisms \eqref{tildeA_f^*} turn the family $\left\{\tilde{A}_S\right\}_{S \in \Scal}$ into a section $\tilde{A}$ of the $\Scal$-fibered category $\Hbb$. This amounts to checking that condition ($\Scal$-sect) is satisfied: given two composable morphisms $f: T \rightarrow S$ and $g: S \rightarrow V$ in $\Scal$, we have to show that the diagram in $\Hbb(T)$
		\begin{equation*}
			\begin{tikzcd}
				f^* g^* \tilde{A}_V \arrow{r}{\tilde{A}^*} \arrow[equal]{d} & f^* \tilde{A}_S \arrow{d}{\tilde{A}^*} \\
				(gf)^* \tilde{A}_V \arrow{r}{\tilde{A}^*} & \tilde{A}_T
			\end{tikzcd}
		\end{equation*}
		is commutative. To this end, choose an embedding $(W;v) \in \Emb_{\Ucal}(V)$, then a factorization $(Y;s,q) \in \Fact_{\Scal}(v \circ g)$, and then a factorization $(X;t,p) \in \Fact_{\Scal}(s \circ f)$; note that the objects $X$ and $Y$ automatically belong to $\Ucal$ (\Cref{lem:Ucal}(2)), and so we obtain embeddings $(Y;s) \in \Emb_{\Ucal}(S)$ and $(X;t) \in \Emb_{\Ucal}(T)$. By construction, it now suffices to show that the diagram in $\Hbb(T)$
		\begin{equation*}
			\begin{tikzcd}[font=\small]
				f^* g^* \tilde{A}_V \arrow{rr}{A_g^{*,(v,s,q)}} \arrow[equal]{d} && f^* \tilde{A}_S \arrow{d}{A_f^{*,(s,t,p)}} \\
				(gf)^* \tilde{A}_V \arrow{rr}{A_{gf}^{*,(v,t,qp)}} && \tilde{A}_T
			\end{tikzcd}
		\end{equation*}
		is commutative. Unwinding the various definitions, we obtain the outer part of the diagram
		\begin{equation*}
			\begin{tikzcd}
				f^* g^* v^* A_W \arrow[equal]{r} \arrow[equal]{dd} & f^* s^* q^* A_W \arrow{r}{A^*} \arrow[equal]{d} & f^* s^* A_Y \arrow[equal]{d} \\
				& t^* p^* q^* A_W \arrow[equal]{d} \arrow{r}{A^*} & t^* p^* A_Y \arrow{d}{A^*} \\
				(gf)^* v^* A_W \arrow[equal]{r} & t^* (qp)^* A_W \arrow{r}{A^*} & t^* A_X
			\end{tikzcd}
		\end{equation*}
		where the left-most piece is commutative by \cite[Lemma~1.3]{Ter23Fib}, the upper-right square is commutative by naturality, and the lower-right square is commutative by axiom ($\Ucal$-sect). This proves the claim.
		\item[(Step 2)] Note that the restriction of the section $\tilde{A}$ to $\Ucal$ coincides with the original section $A$ by construction. We claim that this property characterizes $\tilde{A}$ up to unique isomorphism. To see this, let $A'$ be a section of $\Hbb$ over $\Scal$ extending $A$. For every $S \in \Scal$, consider the isomorphism in $\Hbb(S)$
		\begin{equation*}
			w_S: \tilde{A}_S := \varinjlim_{(Y;s) \in \Gcal_{\Ucal}(S)} \tilde{A}_S^{(s)} := \varinjlim_{(Y;s) \in \Gcal_{\Ucal}(S)} s^* A_Y = \varinjlim_{(Y,s) \in \Gcal(S)} s^* A'_Y \xrightarrow{\varinjlim {A'}_s^*} A'_S.
		\end{equation*}
		Note that, for every embedding $(Y;s) \in \Emb_{\Ucal}(S)$, the isomorphism $w_S$ coincides with the composite
		\begin{equation*}
			\tilde{A}_S = \tilde{A}_S^{(s)} := s^* A_Y = s^* A'_Y \xrightarrow{{A'}_s^*} A'_S.
		\end{equation*}
		We claim that, as $S \in \Scal$ varies, these isomorphisms satisfy condition (mor-$\Scal$-sect), thereby defining an isomorphism $w: \tilde{A} \xrightarrow{\sim} A'$ between sections of $\Hbb$ over $\Scal$: for every morphism $f: T \rightarrow S$ in $\Scal$, we have to check that the diagram in $\Hbb(T)$
		\begin{equation*}
			\begin{tikzcd}
				f^* \tilde{A}_S \arrow{r}{w_S} \arrow{d}{\tilde{A}^*} & f^* A'_S \arrow{d}{{A'}^*} \\
				\tilde{A}_T \arrow{r}{w_T} & A'_T
			\end{tikzcd}
		\end{equation*}
		is commutative. To this end, choose an embedding $(Y;s) \in \Emb_{\Ucal}(S)$ and then a factorization $(X;t,p) \in \Fact_{\Scal}(s \circ f)$. By construction, it now suffices to show that the outer part of the diagram in $\Hbb(T)$
		\begin{equation*}
			\begin{tikzcd}[font=\small]
				f^* s^* A_Y \arrow[equal]{d} \arrow[equal]{r} & f^* s^* A'_Y \arrow[equal]{d} \arrow{rr}{{A'}^*} && f^* A'_S \arrow{d}{{A'}^*} \\
				t^* p^* A_Y \arrow[equal]{r} & t^* p^* A'_Y \arrow{r}{{A'}^*} & t^* A'_X \arrow{r}{{A'}^*} & A'_T
			\end{tikzcd}
		\end{equation*}
		is commutative. But the left-most square is obviously commutative, while the right-most rectangle is commutative by axiom ($\Scal$-sect) applied to $A'$. This proves the claim. Moreover, it is clear from the construction that the isomorphism $w$ just defined extends the identity isomorphism of $A$ and is uniquely determined by this property.
	\end{enumerate}
	Finally, the compatibility of the assignment $A \rightsquigarrow \tilde{A}$ with morphisms of sections follows from the fact that every passage in our construction via \Cref{lem_sect} and \Cref{lem_tildeA-indep} is visibly compatible with morphisms of sections.
\end{proof}

\section{Extending morphisms of fibered categories}\label{sect:ext-mor}

While the extension method for sections of $\Scal$-fibered categories works in complete generality, all the other extension results discussed in this paper require that we restrict to a more specific setting: the first part of the present section is devoted to introducing such a setting. This will allow us to state our first main extension result, for morphisms of $\Scal$-fibered categories, which we prove in the second part. In the last part, we briefly discuss its compatibility with the extension result for sections of $\Scal$-fibered categories already studied in \Cref{sect:ext-sect}.

\subsection{Recollections on localic fibered categories}

In the first place, we need to reinforce the working hypotheses on the base category $\Scal$. From now on, we will work under the following hypothesis, which is a strengthened version of \cite[Hyp.~3.1]{Ter23Fact}:

\begin{hyp}\label{hyp:open}
	We keep the assumptions and notation of \Cref{hyp:Fact} and \Cref{nota:Ucal}. In addition, we assume that:
	\begin{enumerate}
		\item[(i)] The category $\Scal$ has an initial object $\emptyset$. 
		\item[(ii)] For every closed immersion $z: Z \rightarrow S$, there exists a distinguished smooth morphism $u: U \rightarrow S$ with the property that, for every $T \in \Scal$, the map
		\begin{equation*}
			\Hom_{\Scal}(T,U) \xrightarrow{u \circ -} \Hom_{\Scal}(T,S)
		\end{equation*} 
		identifies $\Hom_{\Scal}(T,U)$ with the subset of $\Hom_{\Scal}(T,S)$ consisting of all those morphisms $f: T \rightarrow S$ for which the diagram
		\begin{equation*}
			\begin{tikzcd}
				\emptyset \arrow{r} \arrow{d} & T \arrow{d}{f} \\
				Z \arrow{r}{z} & S		
			\end{tikzcd}
		\end{equation*}
		is Cartesian. We call $u$ the \textit{open immersion} complementary to $z$, and we call $U$ the \textit{open complement} of $Z$ in $S$.
		\item[(iii)] For every $S \in \Scal$, there exists a finite sequence of closed immersions
		\begin{equation*}
			\emptyset = Z_{-1} \rightarrow Z_0 \rightarrow Z_1 \rightarrow \dots \rightarrow Z_n = S
		\end{equation*}
		such that, for each $r = 0, \dots, n$, the open complement of $Z_{r-1}$ in $Z_r$ belongs to $\Ucal$.
	\end{enumerate}
\end{hyp}

\begin{ex}
	As in the previous section, we are interested in applying \Cref{hyp:open} when $\Scal = \Var_k$ is the category of quasi-projective algebraic varieties over a field $k$, so that $\Scal^{sm}$ and $\Scal^{cl}$ denote the subcategories of smooth morphisms and closed immersions, and $\Ucal = \Sm_k$ is the full subcategory of smooth quasi-projective $k$-varieties. In this setting, what we call open immersions in (ii) are the usual open immersions between $k$-varieties. Note that, in order for (iii) to be satisfied, one needs to know that every $S \in \Var_k$ admits some dense open subset which is smooth over $k$: assuming that $k$ is perfect, this is true by \cite[\href{https://stacks.math.columbia.edu/tag/0B8X}{Tag 0B8X}]{stacks-project}. 
\end{ex}

The above hypothesis is well adapted to the setting of \textit{localic $\Scal$-fibered categories} developed in \cite[\S~3]{Ter23Fact}; the conventions about triangulated $\Scal$-fibered categories and triangulated morphisms used there are those listed in \cite[\S~9]{Ter23Fib}. Recall from \cite[Defn.~3.4]{Ter23Fact} that, in our axiomatic setting, a triangulated $\Scal$-fibered category is called \textit{localic} if it satisfies the following conditions:
\begin{enumerate}
	\item[(a)] $\Hbb(\emptyset)$ is the zero triangulated category.
	\item[(b)] For every smooth morphism $p: P \rightarrow S$, the functor $p^*: \Hbb(S) \rightarrow \Hbb(P)$ has a left adjoint
	\begin{equation*}
		p_{\#}: \Hbb(P) \rightarrow \Hbb(S).
	\end{equation*}
    Moreover, for every Cartesian square in $\Scal$
	\begin{equation*}
		\begin{tikzcd}
			P_T \arrow{r}{f'} \arrow{d}{p'} & P \arrow{d}{p} \\
			T \arrow{r}{f} & S
		\end{tikzcd}
	\end{equation*}
	with $p$ a smooth morphism, the composite natural transformation of functors $\Hbb(P) \rightarrow \Hbb(T)$
	\begin{equation*}
		p'_{\#} {f'}^* A \xrightarrow{\eta} p'_{\#} {f'}^* p^* p_{\#} A = p'_{\#} {p'}^* f^* p_{\#} A \xrightarrow{\epsilon} f^* p_{\#} A
	\end{equation*}
	is invertible.
	\item[(c)] For every closed immersion $z: Z \rightarrow S$ in $\Scal$, the functor $z^*: \Hbb(S) \rightarrow \Hbb(W)$ has a right adjoint
	\begin{equation*}
		z_*: \Hbb(Z) \rightarrow \Hbb(S)
	\end{equation*}
    which moreover is fully faithful.
	\item[(d)] For every closed immersion $z: Z \rightarrow S$ in $\Scal$ with complementary open immersion $u: U \rightarrow S$, the pair of functors $(z^*,u^*)$ is conservative.
\end{enumerate}
By \cite[Lemma~3.5]{Ter23Fact}, if $\Hbb$ is a localic $\Scal$-fibered category then, for every closed immersion $z: Z \rightarrow S$ with complementary open immersion $u: U \rightarrow S$, the following hold:
\begin{enumerate}
	\item The functor $u_{\#}: \Hbb(U) \rightarrow \Hbb(S)$ is fully faithful.
	\item The composite functors $z^* \circ u_{\#}$ and $u^* \circ z_*$ vanish.
	\item There exists a canonical \textit{localization distinguished triangle} of functors $\Hbb(S) \rightarrow \Hbb(S)$
	\begin{equation*}
		u_{\#} u^* A \xrightarrow{\epsilon} A \xrightarrow{\eta} z_* z^* A \xrightarrow{+}.
	\end{equation*}
	\item For every Cartesian square in $\Scal$ of the form
	\begin{equation*}
		\begin{tikzcd}
			P_Z \arrow{r}{z'} \arrow{d}{p_Z} & P \arrow{d}{p} \\
			Z \arrow{r}{z} & S
		\end{tikzcd}
	\end{equation*}
	with $p$ a smooth morphism and $z$ a closed immersion, the composite natural transformation of functors $\Hbb(Z) \rightarrow \Hbb(P)$
	\begin{equation*}
		p^* z_* A \xrightarrow{\eta} z'_* {z'}^* p^* z_* A = z'_* p_Z^* z^* z_* A \xrightarrow{\epsilon} z'_* p_Z^* A
	\end{equation*}
	is invertible.
\end{enumerate}
For all these results, one only needs to use the left adjoint functors $p_{\#}$ when the smooth morphism $p$ is an open immersion; the same applies to the results of the present paper.
Note that, using \Cref{rem:Usmcl}, any localic $\Scal$-fibered category defines by restriction a localic $\Ucal$-fibered category.

For the rest of this paper, all $\Scal$-fibered or $\Ucal$-fibered categories considered will be localic. As already mentioned, the main goal of this section is to prove an extension result for morphisms between (localic) $\Scal$-fibered categories. Even if our main focus is on monoidal structures, there are at least three good reasons to start from the case of morphisms of fibered categories: firstly, because it is of independent interest; secondly, since the analogous extension result for tensor structures on fibered categories can be proved by simply repeating the same argument, although in a notationally more intricate setting; thirdly, because we implicitly need the case of morphisms of $\Scal$-fibered categories in order to prove a similar extension result for monoidality of morphisms.

Until the end of this section, we fix two localic triangulated $\Scal$-fibered categories $\Hbb_1$ and $\Hbb_2$. We are interested in studying \textit{morphisms of $\Scal$-fibered categories} $R: \Hbb_1 \rightarrow \Hbb_2$: as in \cite[\S~8]{Ter23Fib}, by this we mean the datum of
\begin{itemize}
	\item for every $S \in \Scal$, a functor $R_S: \Hbb_1(S) \rightarrow \Hbb_2(S)$,
	\item for every morphism $f: T \rightarrow S$ in $\Scal$, a natural isomorphism of functors $\Hbb_1(S) \rightarrow \Hbb_2(T)$
	\begin{equation*}
		\theta = \theta_f: f^* R_S(A) \xrightarrow{\sim} R_T (f^* A),
	\end{equation*}
	called the \textit{$R$-transition isomorphism} along $f$
\end{itemize}
such that the following condition is satisfied:
\begin{enumerate}
	\item[(mor-$\Scal$-fib)] For every pair of composable morphisms $f: T \rightarrow S$ and $g: S \rightarrow V$ in $\Scal$, the diagram of functors $\Hbb_1(V) \rightarrow \Hbb_2(T)$
	\begin{equation*}
		\begin{tikzcd}
			(gf)^* R_V(A) \arrow{rr}{\theta_{gf}} \arrow{d}{\conn^{(2)}_{f,g}} && R_T ((gf)^* A) \arrow{d}{\conn^{(1)}_{f,g}} \\
			f^* g^* R_V(A) \arrow{r}{\theta_g} & f^* R_S (g^* A) \arrow{r}{\theta_f} & R_T (f^* g^* A)
		\end{tikzcd}
	\end{equation*}
	is commutative.
\end{enumerate}
If we are already given a family of functors $\Rcal := \left\{R_S: \Hbb_1(S) \rightarrow \Hbb_2(S) \right\}_{S \in \Scal}$, we say that a collection of natural isomorphisms $\theta = \left\{\theta_f: f^* \circ R_S \xrightarrow{\sim} R_T \circ f^* \right\}_{f: T \rightarrow S}$ satisfying condition (mor-$\Scal$-fib) defines an \textit{$\Scal$-structure} on the family $\Rcal$.

Given two morphisms of $\Scal$-fibered categories $R, R': \Hbb_1 \rightarrow \Hbb_2$, there also exists a natural notion of \textit{$2$-morphism of $\Scal$-fibered categories} $\gamma: R \rightarrow R'$: namely, the datum of
\begin{itemize}
	\item for every $S \in \Scal$, a natural transformation of functors $\Hbb_1(S) \rightarrow \Hbb_2(S)$
	\begin{equation*}
		\gamma_S: R_S(A) \rightarrow R'_S(A)
	\end{equation*}
    satisfying the following condition:
    \begin{enumerate}
    	\item[($2$-mor-$\Scal$-fib)] For every arrow $f: T \rightarrow S$ in $\Scal$, the diagram of functors $\Ccal_1(S) \rightarrow \Ccal_2(T)$
    	\begin{equation*}
    		\begin{tikzcd}
    			f^* R_{1,S}(C) \arrow{r}{\gamma_S} \arrow{d}{\theta_f^{(1)}} & f^* R_{2,S}(C) \arrow{d}{\theta_f^{(2)}} \\
    			R_{1,T}(f^* C) \arrow{r}{\gamma_T} & R_{2,T}(f^* C)
    		\end{tikzcd}
    	\end{equation*}
    	is commutative.
    \end{enumerate}
    A $2$-morphism of $\Scal$-fibered categories $\gamma$ is called a \textit{$2$-isomorphism} if all natural transformations $\gamma_S$ are invertible.
\end{itemize}
The notion of $2$-morphism of $\Scal$-fibered categories will be used only in order to state our extension result for morphisms of $\Scal$-fibered categories in a rigorous form; to this end, we only need to consider $2$-isomorphisms. We are not interested in discussing the question of extending $2$-morphisms.

Since $\Hbb_1$ and $\Hbb_2$ are triangulated $\Scal$-fibered categories, it is natural to only consider those morphisms $R$ for which each functor $R_S: \Hbb_1(S) \rightarrow \Hbb_2(S)$ is triangulated compatibly with inverse image functors: in this way, one obtains the natural notion of \textit{triangulated morphism} of triangulated $\Scal$-fibered categories (see \cite[\S~9]{Ter23Fib}). Since moreover $\Hbb_1$ and $\Hbb_2$ are localic, it is natural to further restrict ourselves to those triangulated morphisms $R: \Hbb_1 \rightarrow \Hbb_2$ satisfying the following adjointability condition: for every smooth morphism $p: P \rightarrow S$ in $\Scal$, the natural transformation of functors $\Hbb_1(P) \rightarrow \Hbb_2(S)$
\begin{equation*}
	p_{\#} R_P(A) \xrightarrow{\eta} p_{\#} R_P(p^* p_{\#} A) \xleftarrow{\theta_p} p_{\#} p^* R_S(p_{\#} A) \xrightarrow{\epsilon} R_S(p_{\#} A)
\end{equation*}
is invertible. As in \cite[Defn.~3.8]{Ter23Fact}, we call \textit{localic} any triangulated morphism enjoying this adjointability property.

Note that, using \Cref{rem:Usmcl}, any (localic, triangulated) morphism $\Hbb_1 \rightarrow \Hbb_2$ defines by restriction a (localic, triangulated) morphism between the underlying $\Ucal$-fibered categories. 

\subsection{Extension of localic morphisms}

We can now show that the restriction operation from $\Scal$ to $\Ucal$ for localic morphisms $R: \Hbb_1 \rightarrow \Hbb_2$ is reversible in a canonical way. Here is the precise statement:

\begin{thm}\label{thm_ext}
	Let $R: \Hbb_1 \rightarrow \Hbb_2$ be a localic morphism of localic $\Ucal$-fibered categories.
	Then there exists a unique (up to unique $2$-isomorphism) morphism of triangulated $\Scal$-fibered categories $\tilde{R}: \Hbb_1 \rightarrow \Hbb_2$ whose restriction to $\Ucal$ coincides with $R$. Moreover, the morphism $\tilde{R}$ is automatically localic.  
\end{thm}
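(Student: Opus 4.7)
The plan is to define $\tilde{R}_S$ by embedding $S$ into a smooth ambient object. For every $S \in \Scal$ and every embedding $(Y;s) \in \Emb_{\Ucal}(S)$, set
\[
\tilde{R}_S^{(s)}(A) := s^* R_Y(s_* A) : \Hbb_1(S) \to \Hbb_2(S),
\]
a triangulated functor built from $R_Y$ (available since $Y \in \Ucal$) and from the adjoints $s_*, s^*$ (available thanks to the localic axioms on $\Hbb_1, \Hbb_2$). The functor $\tilde{R}_S$ will then be assembled from these candidates by gluing along the trivial groupoid $\Gcal_{\Ucal}(S)$, exactly in the style of \Cref{lem_sect} and its corollary.

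The crucial independence step consists in producing, for each morphism $p:(Y';s') \to (Y;s)$ in $\Emb_{\Ucal}(S)$, a natural isomorphism $\alpha_p : \tilde{R}_S^{(s)} \xrightarrow{\sim} \tilde{R}_S^{(s')}$. Using $s^* = {s'}^* p^*$ and the transition isomorphism $\theta_p$ for $R$, the problem reduces to comparing ${s'}^* R_{Y'}(p^* s_* A)$ with ${s'}^* R_{Y'}(s'_* A)$. Form the Cartesian pullback $P := Y' \times_Y S$, with closed immersion $q:P \hookrightarrow Y'$ and smooth $\pi:P \to S$; by \Cref{hyp:Fact}(iii), the universal property provides a closed immersion $\sigma:S \hookrightarrow P$ with $q\sigma = s'$ and $\pi\sigma = \id_S$. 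Base change in $\Hbb_1$ identifies $p^* s_* A \simeq q_* \pi^* A$, and the unit $\pi^* A \to \sigma_* \sigma^* \pi^* A = \sigma_* A$ yields a canonical morphism $p^* s_* A \to s'_* A$. Its cone $K$ satisfies ${s'}^* K = 0$: indeed ${s'}^* = \sigma^* q^*$, and $\sigma^* v_\# = 0$ by the base change along the empty pullback $V \times_P S = \emptyset$, where $v:V \hookrightarrow P$ is the open complement of $\sigma(S)$. Hence $K \simeq w_\# w^* K$, where $w:W \hookrightarrow Y'$ is the open complement of $s'(S)$. Applying $R_{Y'}$ and invoking the localicity of $R$ (which commutes with $w_\#$ since $W \in \Ucal$), then applying ${s'}^*$ together with the base change ${s'}^* w_\# = 0$ in $\Hbb_2$, we obtain ${s'}^* R_{Y'}(K) = 0$. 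The triangle in $\Hbb_2(S)$ then forces the comparison to be an isomorphism, and the cocycle identities parallel to \Cref{lem_sect} follow by the same style of diagram chase, using \cite[Lemma~1.3]{Ter23Fib} and naturality of $\theta$ throughout.

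The transition isomorphisms $\tilde{\theta}_f$ along a morphism $f:T \to S$ in $\Scal$ are constructed by the same blueprint. Given $(Y;s) \in \Emb_{\Ucal}(S)$ and $(X;t,p) \in \Fact_{\Scal}(s \circ f)$, so that $X \in \Ucal$ by \Cref{lem:Ucal}(2), one sets
\[
\tilde{\theta}_f : f^* \tilde{R}_S(A) = t^* p^* R_Y(s_* A) \xrightarrow{\theta_p^{-1}} t^* R_X(p^* s_* A) \xrightarrow{\sim} t^* R_X(t_* f^* A) = \tilde{R}_T(f^* A),
\]
where the final arrow applies $t^* R_X$ to the canonical map $p^* s_* A \to t_* f^* A$ built from base change along the Cartesian pullback $X \times_Y S$ and the induced closed immersion $T \hookrightarrow X \times_Y S$ coming from $f$ and $t$; its cone is killed by $t^* R_X$ by exactly the same support-and-localicity argument as above. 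The independence from $(Y,s)$ and $(X,t,p)$, and the cocycle condition (mor-$\Scal$-fib), are verified by diagram chases entirely parallel to those in the proof of \Cref{prop:ext-sect}.

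The extension property $\tilde{R}|_{\Ucal} = R$ is immediate by choosing $(S;\id_S) \in \Emb_{\Ucal}(S)$ when $S \in \Ucal$. Uniqueness up to unique $2$-isomorphism follows since any extension $R'$ must satisfy $R'_S(A) \simeq s^* R'_Y(s_* A) = s^* R_Y(s_* A) = \tilde{R}_S(A)$ by (mor-$\Scal$-fib) applied to $s$. Finally, the localicity of $\tilde{R}$ is verified smooth-morphism by smooth-morphism: for every smooth $p:P \to S$ in $\Scal$ and every embedding $s:S \hookrightarrow Y$ with $Y \in \Ucal$, a factorization $s \circ p = q \circ \ell$ with $\ell:P \hookrightarrow X$ a closed immersion and $q:X \to Y$ smooth (so $X \in \Ucal$) reduces the comparison $p_\# \tilde{R}_P \to \tilde{R}_S p_\#$, via fully-faithfulness of $s_*$ and $\ell_*$, to a combination of base changes along the Cartesian completion $X \times_Y S$ and the localicity of $R$ over $\Ucal$. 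The main obstacle throughout is organizational rather than conceptual: one must coordinate the various embedding, factorization, and Cartesian-pullback data so that the various base-change, localization, and naturality isomorphisms fit into coherently commuting diagrams; once the central support-cancellation argument from the second paragraph is in place, the remaining work follows the template set by \Cref{prop:ext-sect}.
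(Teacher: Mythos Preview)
Your construction of $\tilde{R}_S$ and $\tilde{\theta}_f$ is essentially the paper's approach. Where you build the comparison maps via Cartesian pullbacks and base change, the paper uses the counit $\epsilon: p^* p_* s'_* A \to s'_* A$ and the composite $p^* s_* A \xrightarrow{\eta} t_* t^* p^* s_* A = t_* f^* s^* s_* A \xrightarrow{\epsilon} t_* f^* A$ directly; your versions unpack to the same morphisms, and your support-cancellation argument (cone lies in the image of $w_\#$, hence is killed by ${s'}^* R_{Y'}$) is equivalent to the paper's localization-triangle computation. The independence and cocycle checks, and the uniqueness sketch, follow the same template.

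The genuine gap is in your localicity argument. You claim that embedding $s:S\hookrightarrow Y$ and factoring $s\circ p = q\circ\ell$ ``reduces the comparison $p_\# \tilde{R}_P \to \tilde{R}_S p_\#$ \dots\ to a combination of base changes along $X\times_Y S$ and the localicity of $R$ over $\Ucal$''. This does not go through: there is no formula expressing $p_\#$ (for $p:P\to S$ smooth in $\Scal$) in terms of $q_\#$ and $\ell_*$. Concretely, with $Q = X\times_Y S$ and $j:P\hookrightarrow Q$ the induced closed immersion (so $p = q'j$ with $q':Q\to S$ smooth), one would need something like $q'_\# j_* \simeq p_\#$, but $j_*$ is a \emph{right} adjoint and $p_\#$ a left adjoint, so no such identification exists (already for $\mathbb{A}^1 \hookrightarrow \mathbb{A}^2 \to \mathrm{pt}$ these differ). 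The localicity of $R$ over $\Ucal$ gives you $q_\# R_X \simeq R_Y q_\#$, but this says nothing about $p_\#$ without a bridge you do not have.

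The paper handles localicity by an entirely different mechanism: induction on the length of a stratification of $S$ as in \Cref{hyp:open}(iii). Writing $z:Z\hookrightarrow S$ for the last closed stratum and $u:U\hookrightarrow S$ for its smooth open complement, one applies the localization triangle to the comparison map and reduces, via the conservative pair $(z^*,u^*)$ and the Cartesian base-change squares over $U$ and $Z$, to the same statement over $U\in\Ucal$ (known) and over $Z$ (shorter stratification). Your proposal never invokes \Cref{hyp:open}(iii), and without it the reduction to $\Ucal$ cannot be completed.
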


The bulk of the construction is contained in the following lemma and corollary, which mimic the strategy outlined in the course of the previous section:

\begin{lem}\label{lem_emb}
	Fix $S \in \Scal$. For every embedding $(Y;s) \in \Emb_{\Ucal}(S)$, consider the triangulated functor
	\begin{equation}\label{R_S_smooth-s}
		R_S^{(s)}: \Hbb_1(S) \xrightarrow{s_*} \Hbb_1(Y) \xrightarrow{R_Y} \Hbb_2(Y) \xrightarrow{s^*} \Hbb_2(S).
	\end{equation}
	Then the following statements hold:
	\begin{enumerate}
		\item For every morphism $p: (Y';s') \rightarrow (Y;s)$ in $\Emb_{\Ucal}(S)$, the natural transformation of functors $\Hbb_1(S) \rightarrow \Hbb_2(S)$
		\begin{equation*}
			\lambda_p: R_S^{(s)}(A) \rightarrow R_S^{(s')}(A)
		\end{equation*}
		defined by taking the composite
		\begin{equation*}
			\begin{tikzcd}[font=\small]
				R_S^{(s)}(A) \arrow[equal]{d} &&& R_S^{(s')}(A) \arrow[equal]{d} \\
				s^* R_Y(s_* A) \arrow[equal]{r} & s'^* p^* R_Y(p_* s'_* A) \arrow{r}{\theta_p} & s'^* R_{Y'} (p^* p_* s'_* A) \arrow{r}{\epsilon} & s'^* R_Y(s'_* A)
			\end{tikzcd}
		\end{equation*}
		is invertible.
		\item For every pair of composable morphisms $q: (Y'';s'') \rightarrow (Y'';s')$, $p: (Y';s') \rightarrow (Y;s)$ in $\Emb_{\Ucal}(S)$, we have the equality between natural isomorphisms of functors $\Hbb_1(S) \rightarrow \Hbb_2(S)$
		\begin{equation*}
			\lambda_{p \circ q} = \lambda_q \circ \lambda_p.
		\end{equation*}
		\item For every choice of embeddings $(Y^{(i)};s^{(i)}) \in \Emb_{\Ucal}(S)$, $i = 1,2$, consider the natural isomorphism of functors $\Hbb_1(S) \rightarrow \Hbb_2(S)$
		\begin{equation*}
			\mu_{1,2}: R_S^{(s^{(1)})}(A) \xrightarrow{\lambda_{p^{(1)}}} R_S^{(s^{(12)})}(A) \xrightarrow{\lambda_{p^{(2)}}^{-1}} R_S^{(s^{(2)})}(A),
		\end{equation*}
		where $s^{(12)}: S \hookrightarrow Y^{(1)} \times Y^{(2)}$ denotes the diagonal immersion while $p^{(r)}: Y^{(1)} \times Y^{(2)} \rightarrow Y^{(r)}$, $r = 1,2$, denote the canonical projections. Then the following cocycle condition holds: 
		
		For every three given embeddings $(Y^{(i)};s^{(i)}) \in \Emb_{\Ucal}(S)$, $i = 1,2,3$, we have (with obvious notation) the equality between natural isomorphisms of functors $\Hbb_1(S) \rightarrow \Hbb_2(S)$
		\begin{equation*}
			\mu_{1,3} = \mu_{2,3} \circ \mu_{1,2}.
		\end{equation*}
	\end{enumerate}
\end{lem}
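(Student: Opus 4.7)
I would organize the proof around the three parts, with part (1) as the technical core and parts (2)--(3) as diagram chases patterned on \Cref{lem_sect}.

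For part (1), the plan is to verify invertibility of $\lambda_p$ arrow by arrow. The transition $\theta_p$ is invertible by the very definition of a morphism of $\Ucal$-fibered categories. The remaining arrow $s'^* R_{Y'}(\epsilon)$ asks for more care, because in the localic framework $p_*$ need not exist as a right adjoint to $p^*$: the symbol ``$\epsilon$'' should be read as the canonical natural transformation $p^* s_* A \to s'_* A$ obtained either as the mate of the counit $\epsilon_s: s^* s_* A \to A$ under $s'^* \dashv s'_*$, or equivalently by combining the base-change iso $p^* s_* A \cong s''_* p_S^* A$ (localic axiom (4) applied to the Cartesian square built from $p$ and $s$) with the map $s''_* p_S^* A \to s''_* \sigma_* A = s'_* A$ induced by the closed immersion $\sigma: S \to S_{Y'}$ with $p_S \sigma = \id_S$ (a closed immersion by \Cref{hyp:Fact}(iii)). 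Using naturality of the invertible transition $\theta_{s'}$, the composite $s'^* R_{Y'}(\epsilon)$ is conjugate to $R_S$ applied to $s'^*\epsilon$. Now $s'^* s'_* A = A$ by the full faithfulness of $s'_*$ (localic axiom (c)), while $s'^* p^* s_* A = s^* s_* A$ via $s = ps'$, and the mate identity collapses $s'^*\epsilon$ to the counit $\epsilon_s: s^* s_* A \to A$, which is invertible. Hence $\lambda_p$ is an isomorphism.

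For part (2), the equality $\lambda_{p \circ q} = \lambda_q \circ \lambda_p$ is verified by assembling a large diagram in $\Hbb_2(S)$ whose outer boundary is the desired identity, just as in the proof of \Cref{lem_sect}(1). Its interior splits into three kinds of cells: cells commuting by axiom (mor-$\Scal$-fib) applied to the composable pair $(p, q)$ (which says that $\theta_{pq}$ factors as $\theta_q \circ \theta_p$ after identifying connection isomorphisms with equalities); cells commuting by naturality of $\theta_p$ and $\theta_q$ against the counit-type maps; and cells commuting by the formal compatibility of the mates under composition, i.e.\ the factorization of the natural map $(pq)^* s_* A \to s''_* A$ as $q^* p^* s_* A \to q^* s'_* A \to s''_* A$, which follows from the triangle identity for $s'^* \dashv s'_*$ and $s''^* \dashv s''_*$.

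For part (3), the cocycle identity on the $\mu_{i,j}$ reduces to part (2) exactly as in \Cref{lem_sect}(2): one considers the triple product $Y^{(1)} \times Y^{(2)} \times Y^{(3)}$, which lies in $\Ucal$ by \Cref{lem:Ucal}(1), together with the diagonal closed immersion $s^{(123)}: S \hookrightarrow Y^{(1)} \times Y^{(2)} \times Y^{(3)}$ (a closed immersion by \Cref{hyp:Fact}(iii)) and the canonical projections onto the pairwise products and the individual factors (smooth by \Cref{hyp:Fact}(ii)); each of the three triangular faces of the resulting hexagonal diagram of $\lambda_p$'s commutes by part (2), and these glue to the cocycle. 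The main obstacle is the calibration in part (1): pinning down the precise meaning of the abusive notation ``$\epsilon: p^* p_* s'_* A \to s'_* A$'' and confirming that, after applying $s'^*$ inside $R_{Y'}$, this map genuinely reduces to the invertible counit $\epsilon_s$. Once that is done, everything else is formal bookkeeping with the axioms (mor-$\Scal$-fib), ($\Scal$-fib-1) and the localic base change isomorphisms.
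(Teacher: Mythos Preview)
Your argument for part~(1) has a genuine gap. You write that ``using naturality of the invertible transition $\theta_{s'}$, the composite $s'^* R_{Y'}(\epsilon)$ is conjugate to $R_S$ applied to $s'^*\epsilon$''. But $R$ is only given as a morphism of $\Ucal$-fibered categories: neither the functor $R_S$ nor the transition isomorphism $\theta_{s'}$ exists, because $S$ need not lie in $\Ucal$ and $s': S \to Y'$ is not a morphism in $\Ucal$. Indeed, constructing $R_S$ is precisely the purpose of the lemma, so you cannot invoke it here. Your observation that $s'^*\epsilon$ identifies with the invertible counit $\epsilon_s: s^* s_* A \to A$ is correct, but without $\theta_{s'}$ there is no way to transport this invertibility through $R_{Y'}$.

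The paper's proof of part~(1) avoids this problem by arguing entirely inside $\Ucal$. One applies the localization triangle in $\Hbb_1(Y')$ associated to the open immersion $u: U \hookrightarrow Y'$ complementary to $s'$, which reduces the invertibility of $s'^* R_{Y'}(\epsilon)$ to two pieces. On the $s'_* s'^*$-part, the map becomes $s'^* R_{Y'}(s'_*(\epsilon_s))$, invertible because $s_*$ and $s'_*$ are fully faithful; this uses only the localic axioms for $\Hbb_1$. On the $u_\# u^*$-part, both source and target vanish: since $R$ is localic and $u$ is a morphism in $\Ucal$, one has $R_{Y'} \circ u_\# \simeq u_\# \circ R_U$, and then $s'^* u_\# = 0$ in $\Hbb_2$. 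The key point is that $Y'$, $U$, and the morphism $u$ all live in $\Ucal$, so only the given data of $R$ over $\Ucal$ is used.

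Your treatments of parts~(2) and~(3) are essentially the same as the paper's and are fine; the paper's diagram for~(2) uses axiom (mor-$\Ucal$-fib) for the pair $(q,p)$, axiom ($\Scal$-fib-1), and naturality, and part~(3) reduces to~(2) via the triple product exactly as you describe.
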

\begin{proof}
	\begin{enumerate}
		\item It suffices to show that the co-unit arrow $\epsilon: {s'}^* R_{Y'} (p^* p_* {s'}_* A) \rightarrow {s'}^* R_Y({s'}_* A)$ is invertible. To this end, let $u: U \rightarrow Y'$ denote the open immersion complementary to $s'$. By the associated localization triangle, it suffices to show that the two co-unit arrows
		\begin{equation*}
			\epsilon: {s'}^* R_{Y'}({s'}_* {s'}^* p^* p_* {s'}_* A) \rightarrow {s'}^* R_{Y'} ({s'}_* {s'}^* {s'}_* A)
		\end{equation*}
		and
		\begin{equation*}
			\epsilon: {s'}^* R_{Y'} (u_{\#} u^* p^* p_* {s'}_* A) \rightarrow {s'}^* R_{Y'} (u_{\#} u^* {s'}_* A)
		\end{equation*}
		are both invertible. For the first one, this follows from the existence of the commutative diagram
		\begin{equation*}
			\begin{tikzcd}
				{s'}^* R_{Y'} ({s'}_* {s'}^* p^* p_* {s'}_* A) \arrow{r}{\epsilon} \arrow[equal]{d} & {s'}^* R_{Y'} ({s'}_* {s'}^* {s'}_* A) \arrow{d}{\epsilon} \\
				{s'}^* R_{Y'} ({s'}_* s^* s_* A) \arrow{r}{\epsilon} & {s'}^* R_{Y'} ({s'}_* A)
			\end{tikzcd}
		\end{equation*}
		where the other two co-unit arrows are invertible because $s_*$ and $s'_*$ are both fully faithful (as $\Hbb_1$ is localic). For the second one, this follows from the fact that both terms are zero, which is a consequence of the existence of an isomorphism ${s'}^* \circ R_{Y'} \circ u_{\#} \simeq {s'}^* u_{\#} \circ R_{U}$ (as $R$ is a localic morphism) and of the vanishing ${s'}^* \circ u_{\#} = 0$ (as $\Hbb_2$ is localic).
		\item Unwinding the various definitions, we have to show that the outer part of the diagram of functors $\Hbb_1(S) \rightarrow \Hbb_2(S)$
		\begin{equation*}
			\begin{tikzcd}[font=\small]
				{s''}^* (pq)^* R_Y ((pq)_* {s''}_* A) \arrow{rr}{\theta} \arrow[equal]{d} && {s''}^* R_{Y''} ((pq)^* (pq)_* {s''}_* A) \arrow{d}{\epsilon}  \\
				s^* R_S (s_* A) \arrow[equal]{d} && {s''}^* R_{Y''} ({s''}_* A) \\
				{s'}^* p^* R_Y (p_* {s'}_* A) \arrow{d}{\theta} && {s''}^* R_{Y''} (q^* q_* {s''}_* A) \arrow{u}{\epsilon} \\
				{s'}^* R_{Y'} (p^* p_* {s'}_* A) \arrow{r}{\epsilon} & {s'}^* R_{Y'} ({s'}_* A) \arrow[equal]{r} & {s''}^* q^* R_{Y'} (q_* {s''}_* A) \arrow{u}{\theta}
			\end{tikzcd}
		\end{equation*}
		is commutative. To this end, we decompose it as
		\begin{equation*}
			\begin{tikzcd}[font=\tiny]
				\bullet \arrow{rrrr}{\theta} \arrow[equal]{d} \arrow[equal]{dr} &&&& \bullet \arrow{d}{\epsilon} \arrow[equal]{dl}  \\
				\bullet \arrow[equal]{d} & {s''}^* q^* p^* R_Y (p_* q_* {s''}_* A) \arrow{dr}{\theta} && {s''}^* R_{Y''} (q^* p^* p_* q_* {s''}_* A) \arrow{dr}{\epsilon} & \bullet \\
				\bullet \arrow{d}{\theta} \arrow[equal]{ur} && {s''}^* q^* R_{Y'} (p^* p_* q_* {s''}_* A) \arrow{ur}{\theta} \arrow{drr}{\epsilon} && \bullet \arrow{u}{\epsilon} \\
				\bullet \arrow{rr}{\epsilon} \arrow[equal]{urr} && \bullet \arrow[equal]{rr} && \bullet \arrow{u}{\theta}
			\end{tikzcd}
		\end{equation*}
		Here, the upper central piece is commutative by axiom (mor-$\Ucal$-fib), the upper-left piece is commutative by axiom ($\Scal$-fib-1), while all the remaining pieces are commutative by naturality. This proves the claim.
		\item By construction, the cocycle condition asks for the commutativity of the outer part of the diagram of functors $\Hbb_1(S) \rightarrow \Hbb_2(S)$
		\begin{equation*}
			\begin{tikzcd}[font=\small]
				R_S^{(s^{(1)})}(A) \arrow{r}{\lambda} \arrow{dd}{\lambda} & R_{S}^{s^{(12)}}(A) \arrow{d}{\lambda} & R_S^{(s^{(2)})}(A) \arrow{l}{\lambda} \arrow{dd}{\lambda} \\
				& R_S^{(s^{(123)})}(A) \\
				R_S^{(s^{13})}(A) \arrow{ur}{\lambda} & R_{S}^{(s^{(3)})}(A) \arrow{l}{\lambda} \arrow{r}{\lambda} & R_S^{(s^{(23)})}(A) \arrow{ul}{\lambda}
			\end{tikzcd}
		\end{equation*}
		where $s^{(123)}: S \hookrightarrow Y_1 \times Y_2 \times Y_3$ denotes the diagonal closed immersion. This follows from the commutativity of the three inner pieces, which is an immediate consequence of the previous point.
	\end{enumerate}
\end{proof}

\begin{cor}\label{cor_emb}
	With the notation of \Cref{lem_emb}, the formula
	\begin{equation}\label{formula:tilde-R}
		\tilde{R}_S(A) := \varinjlim_{(Y,s) \in \Gcal(S)} R_S^{(s)}(A)
	\end{equation}
	defines a triangulated functor $\tilde{R}_S: \Hbb_1(S) \rightarrow \Hbb_2(S)$ which is canonically isomorphic to each of the functors \eqref{R_S_smooth-s} (compatibly with the transition maps of \Cref{lem_emb}(3)).
\end{cor}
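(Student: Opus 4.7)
The plan is to reduce the statement to the general categorical fact that a coherent system of natural isomorphisms indexed by a non-empty connected groupoid yields a well-defined colimit which is canonically isomorphic to each of the objects of the diagram.

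First I would observe that, by \Cref{lem:Emb}, the category $\Emb_{\Ucal}(S)$ is non-empty and connected; hence the same is true of the associated trivial groupoid $\Gcal_{\Ucal}(S)$, in which moreover there is exactly one morphism between any two objects. By \Cref{lem_emb}(1), each transition natural transformation $\lambda_p$ is actually a natural isomorphism, and by parts (2) and (3) the induced family $\{\mu_{1,2}\}$ satisfies the cocycle condition. I would then package these data into a single functor
\begin{equation*}
    \Gcal_{\Ucal}(S) \longrightarrow \mathrm{Fun}(\Hbb_1(S),\Hbb_2(S)), \quad (Y;s) \longmapsto R_S^{(s)},
\end{equation*}
which sends the unique morphism $(Y^{(1)};s^{(1)}) \to (Y^{(2)};s^{(2)})$ to the natural isomorphism $\mu_{1,2}$; the cocycle condition ensures that this assignment is functorial.

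Since $\Gcal_{\Ucal}(S)$ is a non-empty connected groupoid and every morphism in the diagram is sent to an isomorphism, the colimit exists (computed pointwise in $\Hbb_2(S)$) and is canonically isomorphic to each $R_S^{(s)}$, with the canonical isomorphisms compatible with the $\mu_{1,2}$ by construction. This directly produces the functor $\tilde{R}_S: \Hbb_1(S) \to \Hbb_2(S)$ together with the claimed canonical isomorphisms.

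Finally, the triangulated structure on $\tilde{R}_S$ is inherited from any of the $R_S^{(s)}$: each composite $s^* \circ R_Y \circ s_*$ is triangulated, since $s_*$ is a right adjoint to the triangulated functor $s^*$ in the localic setting (hence triangulated), $R_Y$ is triangulated by hypothesis, and $s^*$ is triangulated as part of the triangulated $\Scal$-fibered structure. Transporting this structure through the canonical isomorphism $\tilde{R}_S \simeq R_S^{(s)}$ yields a triangulated structure on $\tilde{R}_S$, and the cocycle condition from \Cref{lem_emb}(3) guarantees that this structure is independent of the chosen $(Y;s)$. The main (though routine) point of care is to verify that the natural isomorphisms $\mu_{1,2}$ commute with shift and distinguished triangles, which is automatic because each of the three functors composed in $R_S^{(s)}$ is triangulated and the $\lambda_p$'s are built from unit/counit natural transformations together with transition isomorphisms $\theta_p$ of a triangulated morphism.
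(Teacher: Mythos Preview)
Your proposal is correct and follows essentially the same approach as the paper, which simply states that the corollary follows immediately from \Cref{lem_emb}(3). You have merely unpacked in detail what this immediate consequence amounts to: the cocycle condition yields a well-defined functor from the connected trivial groupoid $\Gcal_{\Ucal}(S)$ into the functor category, whose colimit therefore exists and is canonically isomorphic to each $R_S^{(s)}$, with the triangulated structure inherited automatically.
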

\begin{proof}
	This follows immediately from \Cref{lem_emb}(3).
\end{proof}

We want to turn the collection of triangulated functors $\Rcal = \left\{\tilde{R}_S: \Hbb_1(S) \rightarrow \Hbb_2(S)\right\}_{S \in \Scal}$ just obtained into a (triangulated) morphism of (triangulated) $\Scal$-fibered categories $\tilde{R}: \Hbb_1 \rightarrow \Hbb_2$ by constructing a canonical (triangulated) $\Scal$-structure on it. The following lemma contains the key idea:

\begin{lem}\label{lem_prep-thm-mor-ext}
	Fix a morphism $f: T \rightarrow S$ in $\Scal$. For every embedding $(Y;s) \in \Emb_{\Ucal}(S)$ and every factorization $(X;t,p) \in \Fact_{\Scal}(s \circ f)$, define a natural transformation of functors $\Hbb_1(S) \rightarrow \Hbb_2(T)$
	\begin{equation*}
		\theta_f^{(s,t,p)}: f^* R_S^{(s)}(A) \rightarrow R_T^{(t)}(f^* A)
	\end{equation*}
	by taking the composite
	\begin{equation*}
		\begin{tikzcd}[font=\small]
			f^* R_S^{(s)}(A) \arrow[equal]{d} &&& R_T^{(t)}(f^* A). \arrow[equal]{d} \\
			f^* s^* R_Y (s_* A) \arrow[equal]{d} &&& t^* R_X (t_* f^* A) \\
			t^* p^* R_Y (s_* A) \arrow{r}{\theta_p} & t^* R_X (p^* s_* A) \arrow{r}{\eta} & t^* R_X (t_* t^* p^* s_* A) \arrow[equal]{r} & t^* R_X  (t_* f^* s^* s_* A) \arrow{u}{\epsilon} 
		\end{tikzcd}
	\end{equation*}
	Then $\theta_f^{(s,t,p)}$ is invertible, and the induced natural isomorphism of functors $\Hbb_1(S) \rightarrow \Hbb_2(T)$
	\begin{equation}\label{theta_R_sm}
		\tilde{\theta} = \tilde{\theta}_f: f^* \tilde{R}_S(A) = f^* R_S^{(s)}(A) \xrightarrow{\theta_f^{(s,t,p)}} R_T^{(t)} (f^* A) = \tilde{R}_T (f^* A)
	\end{equation}
	is independent of the choice of $(Y;s) \in \Emb_{\Ucal}(S)$ and of $(X;t,p) \in \Fact_{\Scal}(s \circ f)$.
\end{lem}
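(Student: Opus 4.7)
The plan is to handle the invertibility of $\theta_f^{(s,t,p)}$ and its independence of choices separately. For invertibility, I would decompose the defining composite into its three consecutive arrows. The outermost piece $t^*(\theta_p)$ is invertible because $\theta_p$ is the transition isomorphism of the morphism $R$ along the smooth morphism $p: X \to Y$ in $\Ucal$ (note $X \in \Ucal$ automatically by \Cref{lem:Ucal}(2)). The innermost piece, induced by $t_* f^*$ applied to the co-unit $s^* s_* A \to A$, is invertible because $s_*$ is fully faithful by the localic assumption on $\Hbb_1$ (condition (c) in the definition). The critical middle piece is the arrow $t^* R_X(\eta): t^* R_X(p^* s_* A) \to t^* R_X(t_* t^* p^* s_* A)$ induced by the unit $\eta: \id \to t_* t^*$, and I expect this to be the main technical obstacle.

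To handle it, I would introduce the open immersion $u: U \to X$ complementary to the closed immersion $t: T \to X$ (available by \Cref{hyp:open}(ii)) and apply the corresponding localization triangle $u_{\#} u^* \to \id \to t_* t^* \to +$ to the object $p^* s_* A \in \Hbb_1(X)$. Applying the triangulated functor $t^* \circ R_X$ produces a distinguished triangle whose leftmost vertex is $t^* R_X(u_{\#} u^* p^* s_* A)$. Now localicity of the morphism $R$ provides a natural isomorphism $R_X \circ u_{\#} \simeq u_{\#} \circ R_U$, while localicity of $\Hbb_2$ gives the vanishing $t^* \circ u_{\#} = 0$. Hence the third vertex of the triangle is zero, so the middle arrow $t^* R_X(\eta)$ is an isomorphism and therefore $\theta_f^{(s,t,p)}$ is invertible.

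For the independence statement, I would proceed in the spirit of \Cref{lem_tildeA-indep}. Using the connectedness of $\Emb_{\Ucal}(S)$ via product embeddings (\Cref{lem:Emb}) and the analogous connectedness of $\Fact_{\Scal}(s \circ f)$ via fibered products of factorizations, any two pairs $((Y;s),(X;t,p))$ and $((Y';s'),(X';t',p'))$ are jointly dominated by a third one. The claim thus reduces to showing, for a morphism $r: (Y';s') \to (Y;s)$ in $\Emb_{\Ucal}(S)$ and a compatible morphism $q: (X';t',p') \to (X;t,p)$ with $r \circ p' = p \circ q$, that the square
\begin{equation*}
    \begin{tikzcd}
        f^* R_S^{(s)}(A) \arrow{rr}{\theta_f^{(s,t,p)}} \arrow{d}{f^*(\lambda_r)} && R_T^{(t)}(f^* A) \arrow{d}{\lambda_q} \\
        f^* R_S^{(s')}(A) \arrow{rr}{\theta_f^{(s',t',p')}} && R_T^{(t')}(f^* A)
    \end{tikzcd}
\end{equation*}
commutes, with $\lambda_r$ and $\lambda_q$ defined as in \Cref{lem_emb}(1). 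Unwinding all definitions yields a diagram whose inner pieces commute by axiom (mor-$\Scal$-fib) applied to $R$ over $\Ucal$ (governing the interplay of $\theta_p$, $\theta_q$ and $\theta_{pq}$), by \cite[Lemma~1.3]{Ter23Fib} for the connection isomorphisms of $\Hbb_1$ and $\Hbb_2$, and by naturality of the units and co-units of the relevant smooth/closed adjunctions. This is a larger but formally parallel analogue of the diagram chases performed in \Cref{lem_tildeA-indep} and \Cref{lem_emb}(2), and I do not anticipate any new conceptual input beyond the vanishing argument of the first step.
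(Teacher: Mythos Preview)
Your proposal is correct and follows essentially the same approach as the paper's proof: the invertibility argument via the localization triangle for the complementary open immersion $u$ and the vanishing $t^* \circ R_X \circ u_{\#} \simeq t^* \circ u_{\#} \circ R_U = 0$ is exactly what the paper does, and your reduction of the independence claim to a single compatibility square dominated by product embeddings matches the paper's strategy. The paper's diagram chase for that square is somewhat more elaborate than you suggest---it requires a two-stage decomposition and invokes both \cite[Lemma~1.3]{Ter23Fib} and \cite[Lemma~1.8]{Ter23Fib}---but the ingredients you name (axiom (mor-$\Ucal$-fib), connection coherence, and naturality of the adjunction units and co-units) are precisely the ones used.
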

\begin{proof}
	We start by showing the invertibility of $\theta_f^{(s,t,p)}$. By construction, this is equivalent to showing the invertibility of the unit arrow $\eta: t^* R_X(A) \rightarrow t^* R_X (t_* t^* A)$. To this end, let $u: U \hookrightarrow X$ denote the open immersion complementary to $t$. Then, by the associated localization triangle, the invertibility of the above unit arrow is equivalent to the vanishing of the composite functor $t^* \circ R_X \circ u_{\#}$, which follows from the existence of a natural isomorphism $R_X \circ u_{\#} \simeq u_{\#} \circ R_U$ (as $R$ is a localic morphism) and from the vanishing $t^* \circ u_{\#} = 0$ (as $\Hbb_2$ is localic).
	
	Next, we show that the natural isomorphism \eqref{theta_R_sm} is independent of the choices of $(Y;s)$ and of $(X;t,p)$. In view of the way the functors $\tilde{R}_S$ and $\tilde{R}_T$ are defined, and using the connectedness of the category $\Emb_{\Ucal}(S)$ via products (\Cref{lem:Emb}), we are reduced to proving the following claim: Given morphisms $r: (Y';s') \rightarrow (Y;s)$ in $\Emb_{\Ucal}(S)$ and $q: (X';t') \rightarrow (X;t)$ in $\Emb_{\Ucal}(T)$ fitting into a commutative diagram of the form
	\begin{equation*}
		\begin{tikzcd}
			X' \arrow{r}{q} \arrow{d}{p'} & X \arrow{d}{p} \\
			Y' \arrow{r}{r} & Y,
		\end{tikzcd}
	\end{equation*}
	the diagram of functors $\Hbb_1(S) \rightarrow \Hbb_2(T)$
	\begin{equation*}
		\begin{tikzcd}
			f^* R_S^{(s')}(A) \arrow{rr}{\theta_f^{(s',t',p')}} && R_T^{(t')}(f^* A) \\
			f^* R_S^{(s)}(A) \arrow{u}{\lambda_r} \arrow{rr}{\theta_f^{(s,t,p)}} && R_T^{(t)}(f^* A) \arrow{u}{\lambda_q}
		\end{tikzcd}
	\end{equation*}
	is commutative. Expanding the definitions, we obtain the more explicit diagram
	\begin{equation*}
		\begin{tikzcd}[font=\small]				
			{t'}^* {p'}^* R_{Y'} ({s'}_* A) \arrow{r}{\theta} & {t'}^* R_{X'} ({p'}^* {s'}_* A) \arrow{r}{\eta} & {t'}^* R_{X'} ({t'}_* {t'}^* {p'}^* {s'}_* A) \arrow[equal]{r} & {t'}^* R_{X'} ({t'}_* f^* {s'}^* {s'}_* A) \arrow{d}{\epsilon} \\
			f^* {s'}^* R_{Y'} ({s'}_* A) \arrow[equal]{u} &&& {t'}^* R_{X'} ({t'}_* f^* A) \\
			f^* {s'}^* R_Y (r^* r_* {s'}_* A) \arrow{u}{\epsilon} &&& {t'}^* R_{X'} (q^* q_* {t'}_* f^* A) \arrow{u}{\epsilon} \\
			f^* {s'}^* r^* R_Y (r_* {s'}_* A) \arrow{u}{\theta} &&& {t'}^* q^* R_X (q_* {t'}_* f^* A) \arrow{u}{\theta} \\
			f^* s^* R_Y (s_* A) \arrow[equal]{d} \arrow[equal]{u} &&& t^* R_X (t_* f^* A) \arrow[equal]{u} \\
			t^* p^* R_Y (s_* A) \arrow{r}{\theta} & t^* R_X (p^* s_* A) \arrow{r}{\eta} & t^* R_X (t_* t^* p^* s_* A) \arrow[equal]{r} & t^* R_X (t_* f^* s^* s_* A) \arrow{u}{\epsilon}
		\end{tikzcd}
	\end{equation*}
	that we decompose as
	\begin{equation*}
		\begin{tikzcd}[font=\tiny]
			\bullet \arrow{rr}{\theta} \arrow[equal]{d} && \bullet \arrow{r}{\eta} & \bullet \arrow[equal]{rr} && \bullet \arrow{d}{\epsilon} \\
			\bullet & {t'}^* {p'}^* R_{Y'} (r^* r_* {s'}_* A) \arrow{r}{\theta} \arrow{ul}{\epsilon} \arrow[equal]{dl}  & {t'}^* R_{X'} ({p'}^* r^* r_* {s'}_* A) \arrow{r}{\eta} \arrow{u}{\epsilon} & {t'}^* R_{X'} ({t'}_* {t'}^* {p'}^* r^* r_* {s'}_* A) \arrow[equal]{r} \arrow{u}{\epsilon} & {t'}^* R_{X'} ({t'}_* f^* {s'}^* r^* r_* {s'}_* A) \arrow[equal]{d} \arrow{ur}{\epsilon} & \bullet \\
			\bullet \arrow{u}{\epsilon} &&&& {t'}^* R_{X'} (t_* f^* s^* s_* A) \arrow{ur}{\epsilon} & \bullet \arrow{u}{\epsilon} \\
			& {t'}^* {p'}^* r^* R_Y (r_* {s'}_* A) \arrow{uu}{\theta} \arrow[equal]{dl} &&& {t'}^* R_{X'} (q^* q_* {t'}_* f^* s^* s_* A) \arrow{u}{\epsilon} \arrow{ur}{\epsilon} \\
			\bullet \arrow{uu}{\theta} & {t'}^* {p'}^* r^* R_Y (s_* A) \arrow[equal]{u} \arrow[equal]{d} &&& {t'}^* q^* R_X (q_* {t'}_* f^* s^* s_* A) \arrow{u}{\theta} \arrow{r}{\epsilon} & \bullet \arrow{uu}{\theta} \\
			\bullet \arrow[equal]{u} \arrow[equal]{d} & {t'}^* q^* p^* R_Y (s_* A) \arrow{r}{\theta} & {t'}^* q^* R_X (p^* s_* A) \arrow{r}{\eta} & {t'}^* q^* R_X (t_* t^* p^* s_* A) \arrow[equal]{r} & {t'}^* q^* R_X (q_* {t'}_* t^* p^* s_* A) \arrow[equal]{u} & \bullet \arrow[equal]{u} \\
			\bullet \arrow{rr}{\theta} \arrow[equal]{ur} && \bullet \arrow{r}{\eta} \arrow[equal]{u} & \bullet \arrow[equal]{rr} \arrow[equal]{u} && \bullet \arrow{u}{\epsilon}
		\end{tikzcd}
	\end{equation*}
	Here, the lower-left lateral piece is commutative by \cite[Lemma~1.3]{Ter23Fib}, the lower-right lateral piece is commutative by construction, and all the remaining lateral pieces are commutative by naturality. Thus it now suffices to show that the central piece
    is commutative as well. To this end, we further decompose it as
	\begin{equation*}
		\begin{tikzcd}[font=\tiny]
			\bullet \arrow{rr}{\theta} \arrow[equal]{ddr} && \bullet \arrow{r}{\eta} \arrow[equal]{dd} & \bullet \arrow[equal]{rr} \arrow[equal]{d} && \bullet \arrow[equal]{d} \\
			&&& {t'}^* R_{X'} ({t'}_* {t'}^* {p'}^* r^* s_* A) && \bullet \\
			\bullet \arrow{uu}{\theta} & {t'}^* {p'}^* R_{Y'}(r^* s_* A) \arrow{r}{\theta} & {t'}^* R_{X'}({p'}^* r^* s_* A) \arrow[equal]{d} \arrow{ur}{\eta} & {t'}^* R_{X'} ({t'}_* {t'}^* q^* p^* s_* A) \arrow[equal]{r} \arrow[equal]{u} & {t'}^* R_{X'}({t'}_* {t'}^* p^* s_* A) \arrow[equal]{ur} & \bullet \arrow{u}{\epsilon} \\
			\bullet \arrow[equal]{u} \arrow[equal]{d} \arrow{ur}{\theta} && {t'}^* R_{X'} (q^* p^* s_* A) \arrow{ur}{\eta} \arrow{r}{\eta} & {t'}^* R_{X'}(q^* t_* t^* p^* s_* A) \arrow[equal]{r} & {t'}^* R_{X'}(q^* q_* {t'}_* t^* p^* s_* A) \arrow{u}{\epsilon} \arrow[equal]{ur} & \bullet \arrow{u}{\theta} \\
			\bullet \arrow{rr}{\theta} && \bullet \arrow{r}{\eta} \arrow{u}{\theta} & \bullet \arrow[equal]{rr} \arrow{u}{\theta} && \bullet \arrow[equal]{u} \arrow{ul}{\theta}
		\end{tikzcd}
	\end{equation*}
	Here, the upper-right piece is commutative by \cite[Lemma~1.3]{Ter23Fib}, the lower-left piece is commutative by \cite[Lemma~1.8]{Ter23Fib}, the central five-term piece is commutative by construction, and all the remaining pieces are commutative by naturality. This proves the claim.
\end{proof}

\begin{proof}[Proof of \Cref{thm_ext}]
	For sake of clarity, we divide the proof into three main steps:
	\begin{enumerate}
		\item[(Step 1)] In the first place, we check that the isomorphisms \eqref{theta_R_sm} turn the collection of triangulated functors \eqref{formula:tilde-R} into a morphism of $\Scal$-fibered categories. To see this, we only need to check that they satisfy axioms (mor-$\Scal$-fib): given two composable morphisms $f: T \rightarrow S$ and $g: S \rightarrow V$ in $\Scal$, we have to show that the diagram of functors $\Hbb_1(V) \rightarrow \Hbb_2(T)$
		\begin{equation*}
			\begin{tikzcd}
				(gf)^* \tilde{R}_V(A) \arrow[equal]{d} \arrow{rr}{\tilde{\theta}} && \tilde{R}_T((gf)^* A) \arrow[equal]{d} \\
				f^* g^* \tilde{R}_V(A) \arrow{r}{\tilde{\theta}} & f^* \tilde{R}_T(g^* A) \arrow{r}{\tilde{\theta}} & \tilde{R}_S(f^* g^* A)
			\end{tikzcd}
		\end{equation*}
		commutes. To this end, choose an object $(W;v) \in \Emb_{\Ucal}(V)$, a factorization $(Y;s,q) \in \Fact_{\Scal}(v \circ g)$, and then a factorization $(X;t,p) \in \Fact_{\Scal}(s \circ f)$; note that the objects $X$ and $Y$ automatically belong to $\Ucal$ (\Cref{lem:Ucal}(2)), and so we obtain embeddings $(Y;s) \in \Emb_{\Ucal}(S)$ and $(X;t) \in \Emb_{\Ucal}(T)$. By construction, it now suffices to show that the diagram
		\begin{equation*}
			\begin{tikzcd}[font=\small]
				(gf)^* R_V(A) \arrow[equal]{d} \arrow{rr}{\theta_{gf}^{(v,t,qp)}} && R_T((gf)^* A) \arrow[equal]{d} \\
				f^* g^* R_V(A) \arrow{r}{\theta_g^{(v,s,q)}} & f^* R_T(g^* A) \arrow{r}{\theta_f^{(s,t,p)}} & R_S(f^* g^* A)
			\end{tikzcd}
		\end{equation*}
		is commutative. Unwinding the various definition, we obtain the more explicit diagram
		\begin{equation*}
			\begin{tikzcd}[font=\tiny]
				t^* (qp)^* R_W (v_* A) \arrow[equal]{d} \arrow{r}{\theta} & t^* R_X ((qp)^* v_* A) \arrow{rr}{\eta} && t^* R_X (t_* t^* (qp)^* v_* A) \arrow[equal]{r} & t^* R_X (t_* (gf)^* v^* v_* A) \arrow{d}{\epsilon} \\
				(gf)^* v^* R_W (v_* A) \arrow[equal]{d} &&&& t^* R_X (t_* (gf)^* A) \arrow[equal]{d} \\
				f^* g^* v^* R_W (v_* A) \arrow[equal]{d} &&&& t^* R_X (t_* f^* g^* A) \\
				f^* s^* q^* R_W (v_* A) \arrow{d}{\theta} &&&& t^* R_X (t_* f^* s^* s_* g^* A) \arrow{u}{\epsilon} \\
				f^* s^* R_Y (q^* v_* A) \arrow{d}{\eta} &&&& t^* R_X (t_* t^* p^* s_* g^* A) \arrow[equal]{u} \\
				f^* s_* R_Y (s_* s^* q^* v_* A) \arrow[equal]{d} &&&& t^* R_X (p^* s_* g^* A) \arrow{u}{\eta} \\
				f^* s^* R_Y (s_* g^* v^* v_* A) \arrow{rr}{\epsilon} && f^* s^* R_Y (s_* g^* A) \arrow[equal]{rr} && t^* p^* R_Y (s_* g^* A) \arrow{u}{\theta}
			\end{tikzcd}
		\end{equation*}
		that we decompose as
		\begin{equation*}
			\begin{tikzcd}[font=\tiny]
				\bullet \arrow[equal]{d} \arrow{r}{\theta} \arrow[equal]{ddr} & \bullet \arrow{r}{\eta} \arrow[equal]{dddr} & \bullet \arrow[equal]{rrr} \arrow[equal]{d} &&& \bullet \arrow{d}{\epsilon} \arrow[equal]{dl} \\
				\bullet \arrow[equal]{d} && t^* R_X(t_* t^* p^* q^* v_* A) \arrow[equal]{r} \arrow{ddr}{\eta} & t^* R_X(t_* f^* s^* q^* v_* A) \arrow[equal]{r} \arrow{d}{\eta} & t^* R_X(t_* f^* g^* v^* v_* A) \arrow{dr}{\epsilon} & \bullet \arrow[equal]{d} \\
				\bullet \arrow[equal]{d} & t^* p^* q^* R_W(v_* A) \arrow{d}{\theta} \arrow[equal]{dl} && t^* R_X(t_* f^* s^* s_* s^* q^* v_* A) \arrow[equal]{dr} \arrow[equal]{d} && \bullet \\
				\bullet \arrow{d}{\theta} & t^* p^* R_Y(q^* v_* A) \arrow[equal]{dl} \arrow{r}{\theta} \arrow{d}{\eta} & t^* R_X(p^* q^* v_* A) \arrow{uu}{\eta} \arrow{d}{\eta} & t^* R_X(t_* t^* p^* s_* s^* q^* v_* A) \arrow[equal]{dr} & t^* R_X(t_* f^* s^* s_* g^* v^* v_* A) \arrow{r}{\epsilon} \arrow{uu}{\epsilon} & \bullet \arrow{u}{\epsilon} \\
				\bullet \arrow{d}{\eta} & t^* p^* R_Y(s_* s^* q^* v_* A) \arrow[equal]{dr}  \arrow{r}{\theta} & t^* R_X(p^* s_* s^* q^* v_* A) \arrow{ur}{\eta} \arrow[equal]{dr} && t^* R_X(t_* t^* p^* s_* g^* v^* v_* A) \arrow{r}{\epsilon} \arrow[equal]{u} & \bullet \arrow[equal]{u} \\
				\bullet \arrow[equal]{d} \arrow[equal]{ur} &  & t^* p^* R_Y(s_* g^* v^* v_* A) \arrow[equal]{dll} \arrow{r}{\theta} \arrow{drrr}{\epsilon} & t^* R_X(p^* s_* g^* v^* v_* A) \arrow{rr}{\epsilon} \arrow{ur}{\eta} && \bullet \arrow{u}{\eta} \\
				\bullet \arrow{rr}{\epsilon} && \bullet \arrow[equal]{rrr} &&& \bullet \arrow{u}{\theta}
			\end{tikzcd}
		\end{equation*}
		Here, the upper-left and upper-right six-term pieces are commutative by \cite[Lemma~1.8]{Ter23Fib} and \cite[Lemma~1.3]{Ter23Fib}, respectively, while all remaining pieces are commutative by naturality and by construction. This proves the claim.
		
		Therefore we obtain a morphism of $\Scal$-fibered categories $\tilde{R}: \Hbb_1 \rightarrow \Hbb_2$ as promised. As a consequence of the explicit description of $\tilde{R}$ provided by \Cref{cor_emb} and \Cref{lem_prep-thm-mor-ext}, it is clear that the restriction of $\tilde{R}$ to $\Ucal$ coincides with $R$. It is also clear that $\tilde{R}$ is canonically a morphism of triangulated $\Scal$-fibered categories in the sense of \cite[\S~9]{Ter23Fib}, since $R$ is assumed to be triangulated and the construction of $\tilde{R}$ only uses triangulated functors. 
		\item[(Step 2)] 
		We now check that $\tilde{R}$ is a localic morphism: given a smooth morphism $p: P \rightarrow S$ in $\Scal$, we have to show that the natural transformation of functors $\Hbb_1(P) \rightarrow \Hbb_2(S)$
		\begin{equation*}
			p_{\#} \tilde{R}_P(A) \xrightarrow{\eta} p_{\#} \tilde{R}_P(p^* p_{\#} A) \xleftarrow{\theta_p} p_{\#} p^* \tilde{R}_S(p_{\#} A) \xrightarrow{\epsilon} \tilde{R}_S (p_{\#} A)
		\end{equation*}
		is invertible. If $S \in \Ucal$ (in which case we also have $P \in \Ucal$, by \Cref{lem:Ucal}(2)) then the claim holds true since $R$ is a localic morphism by hypothesis.
		To prove the claim in general, we argue by induction on the minimal length $n \in \N$ of a sequence of closed immersions of the form
		\begin{equation*}
			\emptyset = Z_{-1} \rightarrow Z_0 \rightarrow Z_1 \rightarrow \dots \rightarrow Z_n = S
		\end{equation*} 
		as in \Cref{hyp:open}(iii). If $n = 0$, we have $S \in \Ucal$ and the claim is already known to hold. For the inductive step, assume that $n \geq 1$ and that the claim is known to hold for $Z_{n-1}$. For notational simplicity, write $Z_{n-1}$ simply as $Z$, and let $z: Z \rightarrow S$ denote the corresponding closed immersion; moreover, let $u: U \rightarrow S$ denote the complementary open immersion.
		Consider the commutative diagram of functors $\Hbb_1(P) \rightarrow \Hbb_2(S)$
		\begin{equation}\label{dia:ext-mor-loc}
			\begin{tikzcd}
				u_{\#} u^* p_{\#} \tilde{R}_T(A) \arrow{r} \arrow{d} & p_{\#} \tilde{R}_T(A) \arrow{r} \arrow{d} & z_* z^* p_{\#} \tilde{R}_T(A) \arrow{r}{+1} \arrow{d} & {} \\
				u_{\#} u^* \tilde{R}_S (p_{\#} A) \arrow{r} & \tilde{R}_S (p_{\#} A) \arrow{r} & z_* z^* \tilde{R}_S (p_{\#} A) \arrow{r}{+1} & {}
			\end{tikzcd}
		\end{equation}
		where both rows are induced by the associated localization triangle. In order to show that the middle vertical arrow is invertible, it suffices to prove the invertibility of the other two vertical arrows. To this end, form the diagram with Cartesian squares
		\begin{equation*}
			\begin{tikzcd}
				P_U \arrow{r}{u'} \arrow{d}{p_U} & P \arrow{d}{p} & P_Z \arrow{d}{p_Z} \arrow{l}{z'} \\
				U \arrow{r}{u} & S & Z. \arrow{l}{z}
			\end{tikzcd}
		\end{equation*}
		It then suffices to show that the two diagrams of functors $\Hbb_1(P) \rightarrow \Hbb_2(S)$
		\begin{equation*}
			\begin{tikzcd}
				u_{\#} u^* p_{\#} \tilde{R}_P(A) \arrow{d}{\star} & u_{\#} p_{U,\#} {u'}^* \tilde{R}_P(A) \arrow{r}{\theta} \arrow{l}{\sim} & u_{\#} p_{U,\#} \tilde{R}_{U_P} ({u'}^* A) \isoarrow{d} \\
				u_{\#} u^* \tilde{R}_S (p_{\#} A) \arrow{r}{\theta} & u_{\#} \tilde{R}_U (u^* p_{\#} A) & u_{\#} \tilde{R}_U (p_{U,\#} {u'}^* A) \arrow{l}{\sim}
			\end{tikzcd}
		\end{equation*}
		and
		\begin{equation*}
			\begin{tikzcd}
				z_* z^* p_{\#} \tilde{R}_P(A) \arrow{d}{\star} & z_* p_{Z,\#} {z'}^* \tilde{R}_P(A) \arrow{r}{\theta} \arrow{l}{\sim} & z_* p_{Z,\#} \tilde{R}_{Z_P} ({z'}^* A) \isoarrow{d} \\
				z_* z^* \tilde{R}_S (p_{\#} A) \arrow{r}{\theta} & z_* \tilde{R}_Z (z^* p_{\#} A) & z_* \tilde{R}_Z (p_{Z,\#} {z'}^* A) \arrow{l}{\sim}
			\end{tikzcd}
		\end{equation*}
		are commutative: indeed, in both of these diagrams, all arrows except possibly the marked one are invertible (by condition definition of localic $\Scal$-fibered category and by inductive hypothesis), and the two marked arrows are exactly the two lateral vertical arrows in \eqref{dia:ext-mor-loc}.
		Let us show, for example, that the first diagram is commutative; the case of the second diagram can be treated in the same way. Unwinding the various definitions in the first diagram, we obtain the more explicit diagram
		\begin{equation*}
			\begin{tikzcd}[font=\small]
				u_{\#} u^* p_{\#} \tilde{R}_P(A) \arrow{d}{\eta} & u_{\#} p_{U,\#} {u'}^* \tilde{R}_P(A) \arrow{l}{\sim} \arrow{r}{\theta} & u_{\#} p_{U,\#} \tilde{R}_{U_P} ({u'}^* A) \arrow{d}{\eta} \\
				u_{\#} u^* p_{\#} \tilde{R}_P (p^* p_{\#} A) && u_{\#} p_{U,\#} \tilde{R}_{U_P} (p_U^* p_{U,\#} {u'}^* A) \\
				u_{\#} u^* p_{\#} p^* \tilde{R}_S (p_{\#} A) \arrow{u}{\theta} \arrow{d}{\epsilon} && u_{\#} p_{U,\#} p_U^* \tilde{R}_U (p_{U,\#} {u'}^* A) \arrow{u}{\theta} \arrow{d}{\epsilon} \\
				u_{\#} u^* \tilde{R}_S (p_{\#} A) \arrow{r}{\theta} & u_{\#} \tilde{R}_U (u^* p_{\#} A) & u_{\#} \tilde{R}_U (p_{U,\#} {u'}^* A) \arrow{l}{\sim}
			\end{tikzcd}
		\end{equation*}
		that we decompose as
		\begin{equation*}
			\begin{tikzcd}[font=\small]
				\bullet \arrow{dd}{\eta} && \bullet \arrow{ll}{\sim} \arrow{rr}{\theta} \arrow{dl}{\eta} && \bullet \arrow{dd}{\eta} \arrow{dl}{\eta} \\
				& u_{\#} p_{U,\#} {u'}^* \tilde{R}_P (p^* p_{\#} A) \arrow{dl}{\sim} \arrow{rr}{\theta} && u_{\#} p_{U,\#} \tilde{R}_{U_P} ({u'}^* p^* p_{\#} A) \arrow[equal]{dd} \\
				\bullet &&&& \bullet \arrow{dl}{\sim} \\
				& u_{\#} p_{U,\#} {u'}^* p^* \tilde{R}_S (p_{\#} A) \arrow{uu}{\theta} \arrow{dl}{\sim} && u_{\#} p_{U,\#} \tilde{R}_{U_P} (p_U^* u^* p_{\#} A) \\
				\bullet \arrow{uu}{\theta} \arrow{dd}{\epsilon} &&&& \bullet \arrow{uu}{\theta} \arrow{dd}{\epsilon} \arrow{dl}{\sim} \\
				& u_{\#} p_{U,\#} p_U^* u^* \tilde{R}_S (p_{\#} A) \arrow{dl}{\epsilon} \arrow[equal]{uu} \arrow{rr}{\theta} && u_{\#} p_{U,\#} p_U^* \tilde{R}_U (u^* p_{\#} A) \arrow{uu}{\theta} \arrow{dl}{\epsilon} \\
				\bullet \arrow{rr}{\theta} && \bullet && \bullet \arrow{ll}{\sim}
			\end{tikzcd}
		\end{equation*}
		Here, the central rectangle is commutative by axiom (mor-$\Ucal$-fib), while all the remaining pieces are commutative by construction and by naturality. This proves the claim.
		\item[(Step 3)] Finally, we check that our morphism $\tilde{R}$ is the unique possible extension of $R$ to a morphism of triangulated $\Scal$-fibered categories. To this end, let $R': \Hbb_1 \rightarrow \Hbb_2$ be a second morphism of triangulated $\Scal$-fibered categories extending $R$; for every morphism $f: T \rightarrow S$ in $\Scal$, let $\theta' = \theta'_f: f^* \circ R_S \xrightarrow{\sim} R_T \circ f^*$ denote the associated $R'$-transition isomorphism. 
		In view of the way the $\tilde{R}$-transition isomorphisms $\tilde{\theta}_f$ are constructed in \Cref{lem_prep-thm-mor-ext}, we see that the equality $\tilde{R} = R'$ will follow as soon as we prove the following claim: For every commutative diagram in $\Scal$ of the form
		\begin{equation*}
			\begin{tikzcd}
				T \arrow{r}{t} \arrow{d}{f} & X \arrow{d}{p} \\
				S \arrow{r}{s} & Y
			\end{tikzcd}
		\end{equation*}
		with $t$ and $s$ closed immersions, the outer part of the diagram of functors $\Hbb_1(S) \rightarrow \Hbb_2(T)$
		\begin{equation*}
			\begin{tikzcd}
				f^* R'_S(A) \arrow{rrr}{\theta'} &&& R'_T (f^* A) \\
				f^* R'_S (s^* s_* A) \arrow{u}{\epsilon} &&& R'_T (t^* t_* f^* A) \arrow{u}{\epsilon}  \\
				f^* s^* R'_Y(s_* A) \arrow{u}{\theta'} &&& t^* R'_X (t_* f^* A) \arrow{u}{\theta'} \\
				t^* p^* R'_Y (s_* A) \arrow[equal]{u} \arrow{r}{\theta'} & t^* R'_X (p^* s_* A) \arrow{r}{\eta} & t^* R'_X (t_* t^* p^* s_* A) \arrow[equal]{r} & t^* R'_X (t_* f^* s^* s_* A) \arrow{u}{\epsilon}
			\end{tikzcd}
		\end{equation*}
		is commutative. To this end, it suffices to decompose it as
		\begin{equation*}
			\begin{tikzcd}[font=\small]
				\bullet \arrow{rrr}{\theta'} &&& \bullet \\
				\bullet \arrow{u}{\epsilon} \arrow{r}{\theta'} & R'_T (f^* s^* s_* A) \arrow[equal]{d} \arrow{urr}{\epsilon} & R'_T (t^* t_* f^* s^* s_* A) \arrow{l}{\epsilon} \arrow{r}{\epsilon} \arrow[equal]{d} & \bullet \arrow{u}{\epsilon}  \\
				\bullet \arrow{u}{\theta'} & R'_T (t^* p^* s_* A) \arrow{r}{\eta}  & R'_T (t^* t_* t^* p^* s_* A) & \bullet \arrow{u}{\theta'} \\
				\bullet \arrow{r}{\theta'} \arrow[equal]{u} & \bullet \arrow{r}{\eta} \arrow{u}{\theta'} & \bullet \arrow[equal]{r} \arrow{u}{\theta'} & \bullet \arrow{u}{\epsilon} \arrow[bend right]{uul}{\theta'}
			\end{tikzcd}
		\end{equation*}
		Here, the bottom-left piece is commutative by \cite[Lemma~1.8]{Ter23Fib}, while the remaining pieces are commutative by naturality and by construction. This proves the claim, thereby concluding the proof.
	\end{enumerate}
\end{proof}

\subsection{Extending images of sections}

We conclude this section by analyzing the compatibility between the extension result for localic morphisms just proved and our previous extension result for sections of $\Scal$-fibered categories obtained in \Cref{sect:ext-sect}. This is based on the notion of \textit{image} of a section $A$ of $\Hbb_1$ over $\Scal$ under a morphism of $\Scal$-fibered categories $R: \Hbb_1 \rightarrow \Hbb_2$: as in \cite{Ter23Fib}, by this we mean the section $R(A)$ of $\Hbb_2$ over $\Scal$ defined by taking
\begin{itemize}
	\item for every $S \in \Scal$, the object $R(A)_S \in \Hbb_2(S)$ is just $R_S(A_S)$,
	\item for every morphism $f: T \rightarrow S$ in $\Scal$, the isomorphism in $\Hbb_2(T)$
	\begin{equation*}
		R(A)_f^*: f^* R(A)_S \xrightarrow{\sim} R(A)_T
	\end{equation*}
	is the composite
	\begin{equation*}
		f^* R_S(A_S) \xrightarrow{\theta_f} R_T(f^* A_S) \xrightarrow{\sim} R_T(A_T).
	\end{equation*}
\end{itemize} 
This assignment defines indeed a section of $\Hbb_2$ over $\Scal$, as checked in \cite[Constr.~1.11]{Ter23Fib}.

With this notion at our disposal, we do the following sanity check:

\begin{lem}\label{lem:ext-sect-comp}
	Let $A$ be a section of $\Hbb_1$ over $\Ucal$. Then there exists a unique isomorphism of sections of $\Hbb_2$ over $\Scal$
	\begin{equation*}
		\tilde{R}({\tilde{A}}) \xrightarrow{\sim} \widetilde{R(A)}
	\end{equation*}
    extending the identity of $R(A)$.
\end{lem}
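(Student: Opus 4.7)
The plan is to reduce the statement directly to the uniqueness part of \Cref{prop:ext-sect} applied to the $\Scal$-fibered category $\Hbb_2$ and the section $R(A)$ of $\Hbb_2$ over $\Ucal$. Both $\tilde{R}(\tilde{A})$ and $\widetilde{R(A)}$ are sections of $\Hbb_2$ over $\Scal$: the first is the image of the section $\tilde{A}$ (given by \Cref{prop:ext-sect}) under the morphism $\tilde{R}: \Hbb_1 \rightarrow \Hbb_2$ (given by \Cref{thm_ext}), constructed according to the recipe recalled just before the lemma; the second is the extension of the section $R(A)$ of $\Hbb_2$ over $\Ucal$, produced by another application of \Cref{prop:ext-sect}. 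So it suffices to check that both sections restrict to $R(A)$ on $\Ucal$ and then invoke uniqueness.

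The first verification would be on objects: for every $Y \in \Ucal$, one has
\begin{equation*}
	\tilde{R}(\tilde{A})_Y = \tilde{R}_Y(\tilde{A}_Y) = R_Y(A_Y) = R(A)_Y,
\end{equation*}
using that $\tilde{R}|_{\Ucal} = R$ and $\tilde{A}|_{\Ucal} = A$; and $\widetilde{R(A)}|_{\Ucal} = R(A)$ by construction.

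The second verification would be on transition isomorphisms. For every morphism $f: Y' \rightarrow Y$ in $\Ucal$, the isomorphism $\tilde{R}(\tilde{A})^*_f$ is by definition the composite
\begin{equation*}
	f^* \tilde{R}_Y(\tilde{A}_Y) \xrightarrow{\tilde{\theta}_f} \tilde{R}_{Y'}(f^* \tilde{A}_Y) \xrightarrow{\tilde{A}^*_f} \tilde{R}_{Y'}(\tilde{A}_{Y'}),
\end{equation*}
and since $f, Y, Y' \in \Ucal$, the transition isomorphism $\tilde{\theta}_f$ coincides with $\theta_f$ (by the very fact that $\tilde{R}$ extends $R$ as a morphism of $\Ucal$-fibered categories) while $\tilde{A}^*_f$ coincides with $A^*_f$ (by \Cref{prop:ext-sect}). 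Hence this composite equals $R(A)^*_f$, which is by definition also the transition isomorphism of $\widetilde{R(A)}$ over $\Ucal$.

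Once these two points are established, the uniqueness clause of \Cref{prop:ext-sect} furnishes a unique isomorphism $\tilde{R}(\tilde{A}) \xrightarrow{\sim} \widetilde{R(A)}$ of sections of $\Hbb_2$ over $\Scal$ extending the identity isomorphism of $R(A)$. There is no serious obstacle here: the only thing to watch is that the two routes for producing transitions on $\Ucal$ (namely, assembling a new section via the image construction applied to the extended data, versus taking the image section over $\Ucal$ directly and then extending) give literally the same data, which follows tautologically from the fact that both $\tilde{R}$ and $\tilde{A}$ restrict to $R$ and $A$ over $\Ucal$.
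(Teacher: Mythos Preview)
Your argument is correct and takes a genuinely different, more conceptual route than the paper. You reduce the claim to the uniqueness clause of \Cref{prop:ext-sect}: since $\tilde{R}|_{\Ucal} = R$ and $\tilde{A}|_{\Ucal} = A$, the image section $\tilde{R}(\tilde{A})$ restricts on $\Ucal$ to $R(A)$ (both on objects and on transition isomorphisms, as you verify), and so does $\widetilde{R(A)}$ by construction; uniqueness then yields the desired isomorphism extending the identity. The paper instead constructs the isomorphism explicitly: for each $S \in \Scal$ and each $(Y;s) \in \Emb_{\Ucal}(S)$ it writes down the map
\[
R_S^{(s)}(A_S^{(s)}) = s^* R_Y(s_* s^* A_Y) \xleftarrow{\ \eta\ } s^* R_Y(A_Y) = R(A)_S^{(s)},
\]
checks by a diagram chase that these are compatible with the transition maps of \Cref{lem_sect} and \Cref{lem_emb} so that one may pass to the colimit over $\Gcal_{\Ucal}(S)$, and then verifies condition (mor-$\Scal$-sect) by another diagram chase. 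Your approach is shorter and avoids all of this; the paper's approach has the advantage of exhibiting the isomorphism concretely in terms of the chosen embedding, which is the form in which it is used later (for instance in the proof of \Cref{lem:ext-acu-rho}(3)). Both are valid proofs of the lemma as stated.
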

\begin{proof}
	In the first place, for every $S \in \Scal$ we want to define an isomorphism in $\Hbb_2(S)$
	\begin{equation}\label{iso:tildeR(A)_S}
		\tilde{R}({\tilde{A}})_S \xrightarrow{\sim} \widetilde{R(A)}_S.
	\end{equation}
	To this end, for every $(Y;s) \in \Emb_{\Ucal}(S)$, consider the isomorphism in $\Hbb_2(S)$
	\begin{equation*}
		R_S^{(s)}(A_S^{(s)}) \xrightarrow{\sim} R(A)_S^{(s)}
	\end{equation*}
	defined as the composite
	\begin{equation*}
		R_S^{(s)}(A_S^{(s)}) := s^* R_Y(s_* s^* A_Y) \xleftarrow{\eta} s^* R_Y(A_Y) =: s^* R(A)_Y =: R(A)_S^{(s)}.
	\end{equation*}
	We claim that, as $(Y;s)$ varies in $\Gcal_{\Ucal}(S)$, these isomorphisms are compatible with the transition maps of \Cref{lem_sect} and \Cref{lem_emb}; if this is the case, we obtain the sought-after isomorphism \eqref{iso:tildeR(A)_S} by taking their colimit over the trivial groupoid $\Gcal_{\Ucal}(S)$ as usual.
	In order to prove the claim, it suffices to show that, for every arrow $p: (Y';s') \rightarrow (Y;s)$ in $\Emb_{\Ucal}(S)$, the diagram in $\Hbb_2(S)$
	\begin{equation*}
		\begin{tikzcd}[font=\small]
			R_S^{(s)}(A_S^{(s)}) \arrow{rr}{\sim} \arrow{d}{\lambda_p} && R(A)_S^{(s)} \arrow{dd}{\alpha_p} \\
			R_S^{(s')}(A_S^{(s)}) \arrow{d}{\alpha_p} \\
			R_S^{(s')}(A_S^{(s')}) \arrow{rr}{\sim} && R(A)_S^{(s')}
		\end{tikzcd}
	\end{equation*} 
	is commutative. Expanding the various definitions, we obtain the outer part of the diagram
	\begin{equation*}
		\begin{tikzcd}[font=\small]
			{s'}^* p^* R_Y(p_* s'_* s^* A_Y) \arrow[equal]{rr} \arrow{d}{\theta} && s^* R_Y(s_* s^* A_Y) \arrow[equal]{d} & s^* R_Y(A_Y) \arrow{l}{\eta} \arrow[equal]{d} \\
			{s'}^* R_{Y'}(p^* p_* s'_* s^* A_Y) \arrow[equal]{r} \arrow[equal]{dr} \arrow{d}{\epsilon} & s^* R_{Y'}(p^* s_* s^* A_Y) \arrow[equal]{d} & {s'}^* p^* R_Y(s_* s^* A_Y) \arrow{l}{\theta} & {s'}^* p^* R_Y(A_Y) \arrow{l}{\eta} \arrow{d}{\theta} \\
			{s'}^* R_{Y'}(s'_* s^* A_Y) \arrow[equal]{d} & {s'}^* R_{Y'}(p^* p_* s'_* {s'}^* p^* A_Y) \arrow{dl}{\epsilon} && {s'}^* R_{Y'}(p^* A_Y) \arrow{ll}{\eta} \arrow{d}{A^*} \arrow{dlll}{\eta} \\
			{s'}^* R_{Y'}(s'_* {s'}^* p^* A_Y) \arrow{rr}{A^*} && {s'}^* R_{Y'}(s'_* {s'}^* A_{Y'}) & {s'}^* R_{Y'}(A_{Y'}) \arrow{l}{\eta}
		\end{tikzcd}
	\end{equation*}
	where all pieces are commutative by naturality and by construction. This proves the claim. 
	
	In order to conclude the proof, it now suffices to prove that the isomorphisms \eqref{iso:tildeR(A)_S} assemble into an isomorphism of sections of $\Hbb_2$ over $\Scal$. This amounts to checking that they satisfy condition (mor-$\Scal$-sect): for every morphism $f: T \rightarrow S$ in $\Scal$, we have to show that the diagram in $\Hbb_2(T)$
	\begin{equation*}
		\begin{tikzcd}
			f^* \tilde{R}(\tilde{A})_S \arrow{d}{\tilde{R}(\tilde{A})^*} \arrow{rr}{\sim} && f^* \widetilde{R(A)}_S \arrow{d}{\tilde{R(A)}^*} \\
			\tilde{R}(\tilde{A})_T \arrow{rr}{\sim} && \widetilde{R(A)}_T
		\end{tikzcd}
	\end{equation*}
	is commutative. To this end, choose an embedding $(Y;s) \in \Emb_{\Ucal}(S)$ and a factorization $(X;t,p) \in \Fact_{\Scal}(s \circ f)$; note that the object $X$ automatically belongs to $\Ucal$ (\Cref{lem:Ucal}(2)), and so we obtain an embedding $(X;t) \in \Emb_{\Ucal}(T)$. By construction, it now suffices to show that the diagram
	\begin{equation*}
		\begin{tikzcd}[font=\small]
			f^* R_S^{(s)}(A_S^{(s)}) \arrow{rr}{\sim} \arrow{d}{\theta_f^{(s,t,p)}} && f^* R(A)_S^{(s)} \arrow{dd}{R(A)_f^{*,(s,t,p)}} \\
			R_T^{(t)}(f^* A_S^{(s)}) \arrow{d}{A_f^{*,(s,t,p)}} \\
			R_T^{(t)}(A_T^{(t)}) \arrow{rr}{\sim} && R(A)_T^{(t)}
		\end{tikzcd}
	\end{equation*}
	is commutative. Unwinding the various definitions, we obtain the outer part of the diagram
	\begin{equation*}
		\begin{tikzcd}[font=\small]
			t^* p^* R_Y(s_* s^* A) \arrow{d}{\theta} & f^* s^* R_Y(s_* s^* A_Y) \arrow[equal]{l} & f^* s^* R_Y(A_Y) \arrow{l}{\eta} \arrow[equal]{d} \\
			t^* R_X(p^* s_* s^* A_Y) \isoarrow{d} && t^* p^* R_Y(A_Y) \arrow{ull}{\eta} \arrow{d}{\theta} \\
			t^* R_X(t_* f^* s^* A_Y) \arrow[equal]{d} && t^* R_X(p^* A_Y) \arrow{ull}{\eta} \arrow{dll}{\eta} \arrow{d}{A^*} \\
			t^* R_X(t_* t^* p^* A_Y) \arrow{r}{A^*} & t^* R_X(t_* t^* A_X) & t^* R_X(A_X) \arrow{l}{\eta}
		\end{tikzcd}
	\end{equation*}
	where the central triangular piece is commutative by construction while the other three pieces are commutative by naturality. This concludes the proof.
\end{proof}

\section{Extending monoidal structures}\label{sect_ext-boxtimes}

Throughout this section, we fix a localic triangulated $\Scal$-fibered category $\Hbb$. Our goal is to extend monoidal structures defined on the underlying $\Ucal$-fibered category to monoidal structures over the whole of $\Scal$; similarly to the case of morphisms of $\Scal$-fibered categories treated in the previous section, this extension procedure only works nicely for suitable monoidal structures, as explained below. For convenience, we formulate our results in the language of internal tensor structures employed in \cite{Ter23Fib}. 

The first part of the present section is devoted to extending internal tensor structures; although we consider the extension result for monoidal structures as the most important result of the present paper for our applications, we prefer to omit most of the details in the proofs as the argument is formally analogous to the one for the proof of \Cref{thm_ext}. In the second and longest part, we show in detail how associativity, commutativity and unit constraints admit a similar extension from $\Ucal$ to $\Scal$. In the last part, we check that the natural mutual compatibility conditions among these constraints are preserved under extension.

\subsection{Extending tensor structures}

In the first place, we are interested in studying \textit{internal tensor structures} $(\otimes,m)$ on $\Hbb$ over $\Scal$: as in \cite[Defn.~2.1]{Ter23Fib}, by this we mean the datum of
\begin{itemize}
	\item for every $S \in \Scal$, a functor
	\begin{equation*}
		-\otimes- = -\otimes_S-: \Hbb(S) \times \Hbb(S) \rightarrow \Hbb(S),
	\end{equation*}
	called the \textit{internal tensor product functor} over $S$,
	\item for every morphism $f: T \rightarrow S$ in $\Scal$, a natural isomorphism of functors $\Hbb(S) \times \Hbb(S) \rightarrow \Hbb(T)$
	\begin{equation*}
		m = m_f: f^* A \otimes f^* B \xrightarrow{\sim} f^*(A \otimes B),
	\end{equation*}
	called the \textit{internal monoidality isomorphism} along $f$
\end{itemize}
satisfying the following condition:
\begin{enumerate}
	\item[\hypertarget{mITS}{($m$ITS)}] For every choice of composable morphisms $f: T \rightarrow S$, $g: S \rightarrow V$ in $\Scal$, the diagram of functors $\Hbb(V) \times \Hbb(V) \rightarrow \Hbb(T)$
	\begin{equation*}
		\begin{tikzcd}
			(gf)^* A \otimes (gf)^* B \arrow{rr}{m} \arrow[equal]{d} && (gf)^*(A \otimes B)  \arrow[equal]{d} \\
			f^* g^* A \otimes f^* g^* B \arrow{r}{m} & f^* (g^* A \otimes g^* B) \arrow{r}{m} & f^* g^* (A \otimes B)
		\end{tikzcd}
	\end{equation*}
	is commutative.
\end{enumerate}
Since $\Hbb$ is a triangulated $\Scal$-fibered category, it is natural to only consider those internal tensor structures $(\otimes,m)$ for which each functor $- \otimes -: \Hbb(S) \times \Hbb(S) \rightarrow \Hbb(S)$ is triangulated compatibly with inverse image functors: in this way, one obtains the natural notion of \textit{triangulated internal tensor structure} (see \cite[\S~9]{Ter23Fib}). Since moreover $\Hbb$ is localic, it is natural to further restrict our attention to those triangulated internal tensor structures $(\otimes,m)$ satisfying the \textit{projection formulae}: for every smooth morphism $p: P \rightarrow S$ in $\Scal$, the natural transformation of functors $\Hbb(P) \times \Hbb(S) \rightarrow \Hbb(S)$
\begin{equation*}
	p_{\#}(A \otimes p^* B) \xrightarrow{\eta} p_{\#}(p^* p_{\#} A \otimes p^* B) \xrightarrow{m} p_{\#} p^*(p_{\#} A \otimes B) \xrightarrow{\epsilon} p_{\#} A \otimes B
\end{equation*}
and the natural transformation of functors $\Hbb(S) \times \Hbb(P) \rightarrow \Hbb(S)$
\begin{equation*}
	p_{\#}(p^* A \otimes B) \xrightarrow{\eta} p_{\#}(p^* A \otimes p^* p_{\#} B) \xrightarrow{m} p_{\#} p^*(A \otimes p_{\#} B) \xrightarrow{\epsilon} A \otimes p_{\#} B
\end{equation*}
are both invertible. We say that such a triangulated internal tensor structure is \textit{localic}. Note that, using the construction of \cite[\S~7]{Ter23Fib}, this notion corresponds to the notion of \textit{localic external tensor structure} introduced in \cite[\S~5]{Ter23Fact}.

Note that, using \Cref{rem:Usmcl}, any (localic, triangulated) internal tensor structure on $\Hbb$ defines by restriction a (localic, triangulated) internal tensor structure on the underlying $\Ucal$-fibered category. In the reverse direction, we have the following result:

\begin{thm}\label{thm:ext-otimes}
	Let $(\otimes,m)$ be a localic internal tensor structure on the $\Ucal$-fibered category $\Hbb$. Then there exists a unique (up to unique equivalence) triangulated internal tensor structure $(\tilde{\otimes},\tilde{m})$ on the $\Scal$-fibered category $\Hbb$ whose restriction to $\Ucal$ coincides with $(\otimes,m)$. Moreover, the internal tensor structure $(\tilde{\otimes},\tilde{m})$ is automatically localic.
\end{thm}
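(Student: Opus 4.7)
The plan is to repeat, with an extra tensor slot, the strategy used in the proof of \Cref{thm_ext}, replacing the $\Ucal$-level functor $R_Y$ by the $\Ucal$-level bifunctor $\otimes_Y$. For every $S \in \Scal$ and every embedding $(Y;s) \in \Emb_{\Ucal}(S)$, I would define the provisional bifunctor
\[
-\otimes_S^{(s)}-: \Hbb(S)\times\Hbb(S)\to\Hbb(S),\qquad (A,B)\longmapsto s^*\bigl(s_*A\otimes_Y s_*B\bigr).
\]
For every morphism $p:(Y';s')\to(Y;s)$ in $\Emb_{\Ucal}(S)$, I would build a natural transformation $\lambda_p: A\otimes_S^{(s)}B\to A\otimes_S^{(s')}B$ by combining $m_p^{-1}$ with the localic base change isomorphism for $p^*s_*$ and the usual (co)unit arrows of the relevant adjunctions. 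Invertibility of $\lambda_p$ is shown along the lines of \Cref{lem_emb}(1): decompose along the localization triangle attached to the open complement $u:U\to Y'$ of $s'$, use the full faithfulness of $s_*$ and $s'_*$ on the $s'_*s'^*$-piece, and use the localic projection formulae for $\otimes$ together with the vanishing $s'^*u_{\#}=0$ on the $u_{\#}u^*$-piece. The functoriality $\lambda_{pq}=\lambda_q\circ\lambda_p$ and the cocycle condition over the trivial groupoid $\Gcal_{\Ucal}(S)$ then follow verbatim from \Cref{lem_emb}(2)-(3), yielding the sought-after bifunctor
\[
A\mathbin{\tilde{\otimes}_S}B:=\varinjlim_{(Y;s)\in\Gcal_{\Ucal}(S)} A\otimes_S^{(s)}B.
\]

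Next, for every $f:T\to S$ in $\Scal$, every $(Y;s)\in\Emb_{\Ucal}(S)$ and every factorization $(X;t,p)\in\Fact_{\Scal}(s\circ f)$ (where $X\in\Ucal$ by \Cref{lem:Ucal}(2)), I would construct the internal monoidality isomorphism
\[
\tilde m_f: f^*A\mathbin{\tilde{\otimes}_T}f^*B\xrightarrow{\sim}f^*\bigl(A\mathbin{\tilde{\otimes}_S}B\bigr)
\]
by copying the recipe of \Cref{lem_prep-thm-mor-ext}: combine the monoidality $m_p$ with the unit $\eta:(-)\to t_*t^*(-)$, inserted symmetrically in both tensor factors. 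Invertibility of $\tilde m_f$ and its independence of the choices of $(Y;s)$ and $(X;t,p)$ are checked by the same cube-shaped diagram chases as in \Cref{lem_prep-thm-mor-ext}, enlarged to accommodate two arguments. The axiom (\hyperlink{mITS}{$m$ITS}) for composable $f,g$ is verified exactly as in Step~1 of the proof of \Cref{thm_ext}, by stacking three compatible factorizations of $v\circ g$, $s\circ f$ and $(v\circ g)\circ f$. The localicness of $(\tilde\otimes,\tilde m)$, i.e.\ the two projection formulae along a smooth $p:P\to S$, is established by induction on the length of a stratification as in \Cref{hyp:open}(iii), exactly along the lines of Step~2 of the proof of \Cref{thm_ext}: the base case $S\in\Ucal$ is the hypothesis, and the inductive step uses the localization triangle associated with a closed/open complementary pair $(z,u)$ to reduce the statement to the cases already known on $U$ (which belongs to $\Ucal$) and on $Z$ (by the inductive hypothesis). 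Finally, uniqueness is proved as in Step~3 of \Cref{thm_ext}: any other extension must agree with $(\tilde\otimes,\tilde m)$ on the building blocks $s^*(s_*A\otimes_Y s_*B)$, hence everywhere after passage to the colimit.

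The main obstacle is purely a matter of bookkeeping: the diagrams needed for the invariance of $\lambda_p$, the invariance and compatibility of $\tilde m_f$, and the verification of (\hyperlink{mITS}{$m$ITS}) are all of the same type as the corresponding ones in the proof of \Cref{thm_ext}, but each arrow now has to be instantiated in both tensor slots, so that the picture roughly doubles in size. Every individual subdiagram still commutes by one of four reasons (naturality, axiom (\hyperlink{mITS}{$m$ITS}) for $\otimes_Y$ with $Y\in\Ucal$, the basic \cite[Lemmata~1.3,~1.8]{Ter23Fib}, or the localic projection formulae), so no essentially new categorical input is required; this is precisely the reason the author chooses to omit the details.
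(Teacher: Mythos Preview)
Your proposal is correct and follows essentially the same approach as the paper: the paper defines the provisional bifunctor $A\otimes_s B := s^*(s_*A\otimes s_*B)$ in \Cref{lem:ext-otimes}, builds the transition maps $\phi_p$ and the monoidality isomorphisms $m_f^{(s,t,p)}$ in \Cref{lem:ext-monoint} exactly as you describe, and then explicitly defers all remaining verifications (including ($m$ITS), localicness, and uniqueness) to the analogous steps in the proof of \Cref{thm_ext}. Your summary of how each step adapts---in particular the use of the projection formulae in place of the localic-morphism hypothesis when handling the $u_\#u^*$-piece, and the stratification induction for localicness---matches the intended argument precisely.
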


The proof of this result follows the same lines as the proof of \Cref{thm_ext}. For sake of brevity, we only indicate how to adapt the arguments of the various intermediate results.

\begin{lem}\label{lem:ext-otimes}
	Fix $S \in \Scal$. For every embedding $(Y;s) \in \Emb_{\Ucal}(S)$, define the functor
	\begin{equation}\label{formula_otimes_s}
		- \otimes_s -: \Hbb(S) \times \Hbb(S) \rightarrow \Hbb(S), \quad A \otimes_s B := s^*(s_* A \otimes s_* B).
	\end{equation}
	Then:
	\begin{enumerate}
		\item For every morphism $p: (Y';s') \rightarrow (Y;s)$ in $\Emb_{\Ucal}(S)$, the natural transformation of functors $\Hbb(S) \times \Hbb(S) \rightarrow \Hbb(S)$
		\begin{equation*}
			\phi_p: \quad A \otimes_s B \rightarrow A \otimes_{s'} B
		\end{equation*}
		defined by taking the composite
		\begin{equation*}
			\begin{tikzcd}[font=\small]
				A \otimes_s B \arrow[equal]{d} &&& A \otimes_{s'} B \arrow[equal]{d} \\
				{s}^*(s_* A \otimes s_* B) \arrow[equal]{r} & {s'}^* p^*(p_* s'_* A \otimes p_* s'_* B) & 
				{s'}^*(p^* p_* s'_* A \otimes p^* p_* s'_* B) \arrow{l}{m} \arrow{r}{\epsilon} & {s'}^* (s'_* A \otimes s'_* B) \\
			\end{tikzcd}
		\end{equation*}
		is invertible.
		\item For every pair of composable morphisms $q: (Y'';s'') \rightarrow (Y';s')$, $p: (Y';s') \rightarrow (Y;s)$ in $\Emb_{\Ucal}(S)$, we have the equality between natural isomorphisms of functors $\Hbb(S) \times \Hbb(S) \rightarrow \Hbb(S)$
		\begin{equation*}
			\phi_{p \circ q} = \phi_q \circ \phi_p.
		\end{equation*}
		\item For every choice of embeddings $(Y^{(i)};s^{(i)}) \in \Emb_{\Ucal}(S)$, $i = 1,2$, consider the natural isomorphism of functors $\Hbb(S) \times \Hbb(S) \rightarrow \Hbb(S)$
		\begin{equation*}
			\psi_{1,2}: A \otimes_{s^{(1)}} B \xrightarrow{\phi_{p^{(1)}}} A \otimes_{s^{(12)}} B \xrightarrow{\phi_{p^{(2)}}^{-1}} A  \otimes_{s^{(2)}} B,
		\end{equation*}
		where $s^{(12)}: S \rightarrow Y^{(1)} \times Y^{(2)}$ denotes the diagonal closed immersion while $p^{(r)}: Y^{(1)} \times Y^{(2)} \rightarrow Y^{(i)}$, $i = 1,2$, denote the canonical projections. Then the following cocycle condition holds:
		
		For every three given embeddings $(Y^{(i)};s^{(i)}) \in \Emb_{\Ucal}(S)$, $i = 1,2,3$, we have (with obvious notation) the equality between natural isomorphisms of functors $\Hbb(S) \times \Hbb(S) \rightarrow \Hbb(S)$
		\begin{equation*}
			\psi_{1,3} = \psi_{2,3} \circ \psi_{1,2}.
		\end{equation*}
	\end{enumerate}
\end{lem}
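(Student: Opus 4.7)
The plan is to transplant the argument of \Cref{lem_emb} to the setting of internal tensor structures, replacing the morphism $R$ together with its transition isomorphisms $\theta$ by the tensor product $\otimes$ together with its internal monoidality isomorphisms $m$. The structural axioms used in \Cref{lem_emb} translate to their tensor-analogs: the adjointability condition defining a localic morphism is replaced by the projection formulae for $\otimes$, and axiom (mor-$\Ucal$-fib) is replaced by axiom \hyperlink{mITS}{($m$ITS)}. Apart from these substitutions, the diagram-chases are notationally heavier but strategically identical.

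For part (1), I would first rewrite $\phi_p$ by pulling the counit $\epsilon : p^* p_* s'_* \to s'_*$ out of the tensor product via $m_{s'}$, thereby reducing the invertibility of $\phi_p$ to the invertibility of
\begin{equation*}
	{s'}^*(p^* p_* s'_* A \otimes p^* p_* s'_* B) \xrightarrow{\epsilon \otimes \epsilon} {s'}^*(s'_* A \otimes s'_* B).
\end{equation*}
Next I would apply the localization triangle associated to the open/closed decomposition $u : U \hookrightarrow Y' \hookleftarrow S : s'$ separately to each tensor factor. The component involving $s'_* {s'}^*(-)$ becomes invertible because $s_*$ and $s'_*$ are both fully faithful, exactly as in the proof of \Cref{lem_emb}(1). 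The component involving $u_{\#} u^*(-)$ vanishes: the projection formulae for $\otimes$ with respect to the smooth morphism $u$ allow one to factor the $u_{\#}$ out of each tensor product, after which the localic vanishing ${s'}^* \circ u_{\#} = 0$ forces both sides to be zero.

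For part (2), the diagram expressing $\phi_{p \circ q} = \phi_q \circ \phi_p$ is the direct analog of the one computed in the proof of \Cref{lem_emb}(2): its upper central piece commutes by axiom \hyperlink{mITS}{($m$ITS)}, its upper-left piece by axiom ($\Scal$-fib-1), and all remaining pieces by naturality of $\epsilon$ and $m$. Part (3) is then purely formal: as in \Cref{lem_emb}(3), the hexagonal cocycle for the isomorphisms $\psi_{i,j}$ associated to three embeddings splits into three inner triangles, each of which commutes by part (2) applied to an appropriate projection out of the triple product $Y^{(1)} \times Y^{(2)} \times Y^{(3)}$.

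The main obstacle will be part (1), since everything else is a mechanical translation of the argument already given for localic morphisms. The delicate point is checking that the projection formulae for $\otimes$ play the same role in the tensor setting that localicity of $R$ played in the morphism setting, and in particular that they allow one to pull $u_{\#}$ out of the tensor product consistently in either variable; once this is in hand, (2) and (3) follow by the same diagram-chases as in \Cref{lem_emb} with purely notational modifications.
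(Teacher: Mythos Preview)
Your proposal is correct and follows essentially the same route as the paper, which explicitly defers all three parts to the arguments already given for \Cref{lem_emb}. The only slip is the mention of ``$m_{s'}$'' in your reduction for part~(1): no such isomorphism is needed, since the definition of $\phi_p$ already has the counit applied factorwise inside ${s'}^*(- \otimes -)$, and the preceding $m_p$ is invertible by hypothesis; the reduction to invertibility of the arrow ${s'}^*(p^* p_* s'_* A \otimes p^* p_* s'_* B) \xrightarrow{\epsilon \otimes \epsilon} {s'}^*(s'_* A \otimes s'_* B)$ is immediate, and from there your localization-plus-projection-formula argument matches the paper exactly.
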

\begin{proof}
	\begin{enumerate}
		\item It suffices to show that the co-unit arrow $\epsilon: {s'}^*(p^* p_* s'_* A \otimes p^* p_* s'_* B) \rightarrow {s'}^* (s'_* A \otimes s'_* B)$ is invertible or, even better, that the two single co-unit arrows $\epsilon: {s'}^*(p^* p_* s'_* A \otimes p^* p_* s'_* B) \rightarrow {s'}^* (p^* p_* s'_* A \otimes s'_* B)$ and $\epsilon: {s'}^*(p^* p_* s'_* A \otimes s'_* B) \rightarrow {s'}^* (s'_* A \otimes s'_* B)$ are both invertible. This can be done as for \Cref{lem_emb}(1).
		\item This can be proved by the same argument used for \Cref{lem_emb}(2).
		\item This follows formally from the previous point, as in the case of \Cref{lem_emb}(3).
	\end{enumerate}
\end{proof}

\begin{cor}\label{cor:ext-otimes}
	For every $S \in \Scal$, the formula
	\begin{equation}\label{formula:otimes-ext}
		A \tilde{\otimes}_S B := \varinjlim_{(Y;s) \in \Gcal(S)} A \otimes_s B
	\end{equation}
	defines a bi-triangulated functor $- \tilde{\otimes} - = - \tilde{\otimes}_S -: \Hbb(S) \times \Hbb(S) \rightarrow \Hbb(S)$ which is canonically isomorphic to each of the functors \eqref{formula_otimes_s} (compatibly with the transition maps of \Cref{lem:ext-otimes}(3)).
\end{cor}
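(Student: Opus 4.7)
The plan is to imitate the deduction that the author uses for \Cref{cor_emb}: once the cocycle condition of the previous lemma is established, the corollary is a formal consequence. I would structure the argument in two short steps.

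First, I would observe that the index category $\Gcal_{\Ucal}(S)$ is a trivial groupoid, and by \Cref{lem:Emb} it is connected and non-empty. Combined with \Cref{lem:ext-otimes}(3), the isomorphisms $\psi_{1,2}$ equip the family $\{- \otimes_s -\}_{(Y;s) \in \Emb_{\Ucal}(S)}$ with a coherent system of comparison isomorphisms indexed by $\Gcal_{\Ucal}(S)$. In this situation the colimit \eqref{formula:otimes-ext} is automatically represented by any chosen member $- \otimes_s -$, with the canonical isomorphism to each other term given by $\psi_{1,2}$; the cocycle identity $\psi_{1,3} = \psi_{2,3} \circ \psi_{1,2}$ guarantees that these canonical isomorphisms compose correctly, so the representation is well-defined.

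Second, I would verify that the resulting functor is bi-triangulated. For every embedding $(Y;s) \in \Emb_{\Ucal}(S)$, the formula $A \otimes_s B = s^*(s_* A \otimes_Y s_* B)$ exhibits $- \otimes_s -$ as the composition of the triangulated functors $s^*$ and $s_*$ (in each variable separately) with the bi-triangulated tensor product $- \otimes_Y -$ on $\Hbb(Y)$, which is bi-triangulated by the assumption that $(\otimes,m)$ is a triangulated internal tensor structure on the $\Ucal$-fibered category. Hence each $- \otimes_s -$ is bi-triangulated, and since the transition isomorphisms $\phi_p$ and $\psi_{1,2}$ are natural transformations between bi-triangulated functors, the canonical identification of the colimit with any representative transports this bi-triangulated structure in a manner independent of the chosen representative.

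There is no genuine obstacle here: the only content is the observation that a colimit indexed by a connected trivial groupoid over a coherently compatible family collapses to any of its terms, and that all structures in sight are preserved under this collapse. No additional triangulated or adjointability hypothesis is invoked at this stage; those will only become relevant when one equips the family $\{\tilde{\otimes}_S\}_{S \in \Scal}$ with internal monoidality isomorphisms $\tilde{m}_f$ in the proof of \Cref{thm:ext-otimes}, in direct analogy with \Cref{lem_prep-thm-mor-ext}.
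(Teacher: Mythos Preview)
Your proposal is correct and follows essentially the same approach as the paper: the paper's proof is a one-liner stating that the result follows immediately from \Cref{lem:ext-otimes}(3) in the same way as \Cref{cor_emb} follows from \Cref{lem_emb}(3), and you have simply unpacked what this means. Your additional remark on why the resulting functor is bi-triangulated is a helpful elaboration that the paper leaves implicit.
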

\begin{proof}
	This follows immediately from \Cref{lem:ext-otimes}(3), in the same way as \Cref{cor_emb} follows from \Cref{lem_emb}(3).
\end{proof}

\begin{lem}\label{lem:ext-monoint}
	Fix a morphism $f: T \rightarrow S$ in $\Scal$. For every embedding $(Y;s) \in \Emb_{\Ucal}(S)$ and every factorization $(X;t,p) \in \Fact_{\Scal}(s \circ f)$, define a natural transformation of functors $\Hbb(S) \times \Hbb(S) \rightarrow \Hbb(T)$
	\begin{equation}\label{m_f^stp}
		m_f^{(s,t,p)}: f^* A \otimes_t f^* B \xrightarrow{\sim} f^* (A \otimes_s B)
	\end{equation}
	by taking the composite
	\begin{equation*}
		\begin{tikzcd}[font=\small]
			f^* A \otimes_t f^* B \arrow[equal]{d} &&& f^* (A \otimes_s B) \arrow[equal]{d} \\
			t^* (t_* f^* A \otimes t_* f^* B) & t^* (p^* s_* A \otimes p^* s_* B) \arrow{l}{\sim} \arrow{r}{m} & t^* p^* (s_* A \otimes s_* B) \arrow[equal]{r} & f^* s^* (s_* A \otimes s_* B) 
		\end{tikzcd}
	\end{equation*}
    where the unnamed natural isomorphism is the composite
    \begin{equation}\label{iso:aux-otimes}
    	t^* (p^* s_* A \otimes p^* s_* B) \xrightarrow{\eta} t^* (t_* t^* p^* s_* A \otimes t_* t^* p^* s_* B) = t^* (t_* f^* s^* s_* A \otimes t_* f^* s^* s_* B) \xrightarrow{\epsilon} t^* (t_* f^* A \otimes t_* f^* B).
    \end{equation}
    Then $m_f^{(s,t,p)}$ is invertible, and the induced natural isomorphism of functors $\Hbb(S) \times \Hbb(S) \rightarrow \Hbb(T)$
    \begin{equation}\label{isoUS:mono-int}
    	m = m_f: f^* A \otimes f^* B = f^* A \otimes_t f^* B \xrightarrow{m_f^{(s,t,p)}} f^* (A \otimes_s B) = f^* (A \otimes B)
    \end{equation}
    is independent of the choice of $(Y;s) \in \Emb_{\Ucal}(S)$ and of $(X;t,p) \in \Fact_{\Scal}(s \circ f)$.
\end{lem}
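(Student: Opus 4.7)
The plan is to follow the same template as the proof of \Cref{lem_prep-thm-mor-ext}, splitting the argument into two parts: first establishing the invertibility of $m_f^{(s,t,p)}$, and then checking independence of the choices via a diagram chase using the connectedness properties of $\Emb_{\Ucal}(S)$ and $\Fact_{\Scal}(s \circ f)$.

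For invertibility, since the monoidality isomorphism $m_p: t^*(p^*s_*A \otimes p^*s_*B) \xrightarrow{\sim} t^*p^*(s_*A \otimes s_*B)$ is invertible by hypothesis (it is the restriction of $(\otimes,m)$ to $\Ucal$, applied along the smooth morphism $p$), it suffices to show that the auxiliary natural transformation \eqref{iso:aux-otimes} is invertible. I would split \eqref{iso:aux-otimes} into its $\eta$-part and its $\epsilon$-part. The co-unit piece $t^*(t_*f^*s^*s_*A \otimes t_*f^*s^*s_*B) \xrightarrow{\epsilon} t^*(t_*f^*A \otimes t_*f^*B)$ is invertible by a direct naturality argument, using that the co-unit $\epsilon\colon s^*s_* \to \id$ is a natural isomorphism since $s_*$ is fully faithful ($\Hbb$ being localic). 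For the unit piece $t^*(p^*s_*A \otimes p^*s_*B) \xrightarrow{\eta \otimes \eta} t^*(t_*t^*p^*s_*A \otimes t_*t^*p^*s_*B)$, I would factor it through the intermediate $t^*(t_*t^*p^*s_*A \otimes p^*s_*B)$ and reduce to showing that $t^*(\eta_M \otimes \id_N)$ and $t^*(\id \otimes \eta_N)$ are invertible, for $M = p^*s_*A$, $N = p^*s_*B$. Letting $u\colon U \to X$ be the open immersion complementary to $t$, the localization triangle $u_\#u^*M \to M \xrightarrow{\eta} t_*t^*M$ identifies the cofiber of $\eta_M \otimes \id_N$ with $u_\#u^*M \otimes N[1]$, which by the projection formula for $u$ (part of being a localic tensor structure) is isomorphic to $u_\#(u^*M \otimes u^*N)[1]$; applying $t^*$ kills this since $t^* \circ u_\# = 0$ ($\Hbb$ being localic). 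The same argument works on the other side.

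For independence, I would use the connectedness of $\Emb_{\Ucal}(S)$ via products (\Cref{lem:Emb}) together with the analogous product-based connectedness of $\Fact_{\Scal}(s \circ f)$ to reduce to verifying the following compatibility: given $r\colon (Y';s') \to (Y;s)$ in $\Emb_{\Ucal}(S)$ and $q\colon (X';t') \to (X;t)$ in $\Emb_{\Ucal}(T)$ fitting into a commutative square
\begin{equation*}
\begin{tikzcd}
X' \arrow{r}{q} \arrow{d}{p'} & X \arrow{d}{p} \\
Y' \arrow{r}{r} & Y,
\end{tikzcd}
\end{equation*}
the two composite isomorphisms $f^*A \otimes_{t} f^*B \to f^*(A \otimes_{s'} B)$ obtained by going ``first via $m_f^{(s,t,p)}$ then along $\phi_r$'' versus ``first along $\phi_q$ (twice) then via $m_f^{(s',t',p')}$'' agree. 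Expanding the definitions produces a large diagram whose pieces commute by three mechanisms: naturality of $\eta$, $\epsilon$ and $m$; the $\Scal$-fibered axiom ($\Scal$-fib-1) (which lets us treat connection isomorphisms as equalities, cf.\ \cite[Lemma~1.3]{Ter23Fib}); and axiom (\hyperlink{mITS}{$m$ITS}) over $\Ucal$ applied to the smooth morphisms $p$, $q$, $p'$, $r$. The structural argument is the tensor-product analogue of the chase in \Cref{lem_prep-thm-mor-ext}, with the monoidal constraint $m$ replacing the transition isomorphism $\theta$ and with no essentially new subtlety.

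The main obstacle is the bookkeeping in the independence diagram: the chase involves two simultaneous applications of $\eta$ and $\epsilon$ (one in each tensor factor) together with the monoidality isomorphism $m$, so the relevant diagram is considerably larger than the one appearing in the proof of \Cref{lem_prep-thm-mor-ext}. However, every internal region reduces to naturality, to the fibered category axiom, or to (\hyperlink{mITS}{$m$ITS}) over $\Ucal$, exactly as in the morphism case; for this reason I would, in accordance with the convention announced at the start of the section, omit the explicit drawing of the diagram and signal that it is the tensor-product analogue of the one proving \Cref{lem_prep-thm-mor-ext}.
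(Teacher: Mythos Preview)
Your proposal is correct and follows essentially the same approach as the paper: the paper also reduces invertibility of \eqref{iso:aux-otimes} to invertibility of the two single unit arrows (handled via the localization triangles, which is exactly your projection-formula-plus-$t^*u_\#=0$ argument made explicit), and for independence it simply invokes ``the same argument used for \Cref{lem_prep-thm-mor-ext}'', which is precisely the diagram chase you outline with $m$ in place of $\theta$ and (\hyperlink{mITS}{$m$ITS}) in place of (mor-$\Ucal$-fib).
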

\begin{proof}
	In the first place, one needs to check that the composite \eqref{iso:aux-otimes} is indeed invertible. This amounts to checking that the unit arrow $\eta: t^* (p^* s_* A \otimes p^* s_* B) \rightarrow t^* (t_* t^* p^* s_* A \otimes t_* t^* p^* s_* B)$ is invertible or, better, that the two single unit arrows $t^* (p^* s_* A \otimes p^* s_* B) \xrightarrow{\eta} t^* (t_* t^* p^* s_* A \otimes p^* s_* B)$ and $t^* (t_* t^* p^* s_* A \otimes p^* s_* B) \xrightarrow{\eta} t^* (t_* t^* p^* s_* A \otimes t_* t^* p^* s_* B)$ are both invertible. This can be done as usual via the associated localization triangles. The rest of the proof follows by the same argument used for \Cref{lem_prep-thm-mor-ext}.
\end{proof}

\begin{proof}[Proof of \Cref{thm:ext-otimes}]
	One needs to check that the natural isomorphisms \eqref{m_f^stp} turn the collection of bi-triangulated functors \eqref{formula:otimes-ext} into an internal tensor structure on $\Hbb$: this amounts to showing that they satisfy condition ($m$ITS), which can be checked by suitably adapting the argument used for \Cref{thm_ext}. The statements about the restriction to $\Ucal$ and the uniqueness of the internal tensor structure $(\tilde{\otimes},\tilde{m})$ follow by easy considerations analogous to those for \Cref{thm_ext}; we leave the details to the interested reader.
\end{proof}

\subsection{Extending associativity, commutativity, and unit constraints}

Keeping the notation of the previous subsection, we suppose to be given a localic internal tensor structure $(\otimes,m)$ on the $\Ucal$-fibered category $\Hbb$; we let $(\tilde{\otimes},\tilde{m})$ denote the internal tensor structure over $\Scal$ obtained as in \Cref{sect_ext-boxtimes}. We want to show that internal associativity, commutativity and unit constraints on $(\otimes,m)$ extend to analogous constraints on $(\tilde{\otimes},\tilde{m})$. In each case, we construct the constraints over $\Scal$ by exploiting the embedding categories $\Emb_{\Ucal}(S)$.

We start by studying \textit{internal associativity constraints} $a$ on $(\tilde{\otimes},\tilde{m})$: as in \cite[Defn.~3.1]{Ter23Fib}, by this we mean the datum of
\begin{itemize}
	\item for every $S \in \Scal$, a natural isomorphism of functors $\Hbb(S) \times \Hbb(S) \times \Hbb(S) \rightarrow \Hbb(S)$
	\begin{equation*}
		a = a_S: (A \tilde{\otimes} B) \tilde{\otimes} C \xrightarrow{\sim} A \tilde{\otimes} (B \tilde{\otimes} C)
	\end{equation*}
\end{itemize}
satisfying the following conditions:
\begin{enumerate}
	\item[($a$ITS-1)] For every $S \in \Scal$, the diagram of functors $\Hbb(S) \times \Hbb(S) \times \Hbb(S) \times \Hbb(S) \rightarrow \Hbb(S)$
	\begin{equation*}
		\begin{tikzcd}
			((A \tilde{\otimes} B) \tilde{\otimes} C) \tilde{\otimes} D \arrow{d}{a} \arrow{rr}{a} && (A \tilde{\otimes} B) \tilde{\otimes} (C \tilde{\otimes} D) \arrow{d}{a} \\
			(A \tilde{\otimes} (B \tilde{\otimes} C)) \tilde{\otimes} D \arrow{r}{a} & A \tilde{\otimes} ((B \tilde{\otimes} C) \tilde{\otimes} D) \arrow{r}{a} & A \tilde{\otimes} (B \tilde{\otimes} (C \tilde{\otimes} D))
		\end{tikzcd}
	\end{equation*}
	is commutative.
	\item[($a$ITS-2)] For every morphism $f: T \rightarrow S$ in $\Scal$, the diagram of functors $\Hbb(S) \times \Hbb(S) \times \Hbb(S) \rightarrow \Hbb(T)$
	\begin{equation*}
		\begin{tikzcd}
			(f^* A \tilde{\otimes} f^* B) \tilde{\otimes} f^* C \arrow{r}{\tilde{m}} \arrow{d}{a} & f^* (A \tilde{\otimes} B) \tilde{\otimes} f^* C \arrow{r}{\tilde{m}} & f^* ((A \tilde{\otimes} B) \tilde{\otimes} C) \arrow{d}{a} \\
			f^* A \tilde{\otimes} (f^* B \tilde{\otimes} f^* C) \arrow{r}{\tilde{m}} & f^* A \tilde{\otimes} f^* (B \tilde{\otimes} C) \arrow{r}{\tilde{m}} & f^* (A \tilde{\otimes} (B \tilde{\otimes} C))
		\end{tikzcd}
	\end{equation*}
	is commutative.
\end{enumerate}
Note that every internal associativity constraint on $(\tilde{\otimes},\tilde{m})$ defines an analogous constraint on the internal tensor structure $(\otimes,m)$ over $\Ucal$. In the reverse direction, we have the following result: 

\begin{prop}\label{prop:asso_int-ext}
	Let $a$ be an internal associativity constraint on $(\otimes,m)$. Then there exists a unique internal associativity constraint $\tilde{a}$ on $(\tilde{\otimes},\tilde{m})$ whose restriction to $\Ucal$ coincides with $a$.
\end{prop}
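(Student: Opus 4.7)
The plan is to imitate the extension strategy of \Cref{thm:ext-otimes}, now transported to the ternary datum of the associativity constraint. The crucial preliminary is a ``closed-immersion projection formula'' that identifies certain mixed expressions: for any embedding $(Y;s) \in \Emb_{\Ucal}(S)$ with complementary open immersion $u: U \to Y$ and any $A,B,C \in \Hbb(S)$, the localization triangle
\begin{equation*}
	u_{\#} u^*(s_* A \otimes s_* B) \to s_* A \otimes s_* B \xrightarrow{\eta} s_*(A \otimes_s B) \to [+1]
\end{equation*}
tensored on the right with $s_* C$ has its outer term identified via the smooth projection formula with $u_{\#}(u^*(s_* A \otimes s_* B) \otimes u^* s_* C) = 0$, using $u^* s_* = 0$; hence $\eta \otimes \id$ is invertible, and applying $s^*$ produces a canonical isomorphism
\begin{equation*}
	\kappa^{(s)}: s^*((s_* A \otimes s_* B) \otimes s_* C) \xrightarrow{\sim} (A \otimes_s B) \otimes_s C.
\end{equation*}
The symmetric argument on the left produces $\kappa'^{(s)}: s^*(s_* A \otimes (s_* B \otimes s_* C)) \xrightarrow{\sim} A \otimes_s (B \otimes_s C)$, and I set
\begin{equation*}
	a_S^{(s)} := \kappa'^{(s)} \circ s^*(a_Y) \circ (\kappa^{(s)})^{-1}: (A \otimes_s B) \otimes_s C \xrightarrow{\sim} A \otimes_s (B \otimes_s C).
\end{equation*}

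The globalization follows the familiar recipe. For a morphism $p: (Y';s') \to (Y;s)$ in $\Emb_{\Ucal}(S)$, the transition isomorphism $\phi_p$ of \Cref{lem:ext-otimes} intertwines $a_S^{(s)}$ and $a_S^{(s')}$; the verification reduces, via the naturality of the vanishing arguments and the compatibility of $\kappa^{(s)}, \kappa^{(s')}$ with pullback along $p$, to ($a$ITS-2) for $a$ at the smooth morphism $p: Y' \to Y$ in $\Ucal$. The cocycle condition on diagonals $S \to Y^{(1)} \times Y^{(2)} \times Y^{(3)}$ is then formal, as in \Cref{lem_emb}(3), so the local constraints $a_S^{(s)}$ glue to a well-defined natural isomorphism $\tilde{a}_S$ on $\Hbb(S)$ whose restriction to $\Ucal$ coincides with $a$ by construction.

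Two axioms then remain. The pentagon ($a$ITS-1) is obtained by computing all terms through a single embedding $(Y;s)$ and using the evident four-fold analogue of $\kappa^{(s)}$ to reduce the diagram to the pentagon for $a_Y$ in $\Hbb(Y)$. The compatibility ($a$ITS-2) with $\tilde{m}$ is the main obstacle: given $f: T \to S$, I choose $(Y;s) \in \Emb_{\Ucal}(S)$ and a factorization $(X;t,p) \in \Fact_{\Scal}(s \circ f)$, which by \Cref{lem:Ucal}(2) places $X \in \Ucal$ and yields $(X;t) \in \Emb_{\Ucal}(T)$; substituting the explicit formulae from \Cref{lem:ext-monoint} for $\tilde{m}_f$ and from Step~1 for $\tilde{a}$, the required hexagon becomes a large but tractable diagram whose essential inner piece is ($a$ITS-2) for the original constraint $a$ at the smooth morphism $p: X \to Y$, all remaining tiles being commutative by naturality, by ($\Scal$-fib-1), by \cite[Lemma~1.3, Lemma~1.8]{Ter23Fib}, and by the vanishing arguments underlying $\kappa^{(s)}$ and $\kappa^{(t)}$.

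Uniqueness is then immediate: for any extension $a'$ of $a$, axiom ($a$ITS-2) applied to the closed immersion $s: S \to Y$ (with $Y \in \Ucal$, so $a'_Y = a_Y$) and to the isomorphism $\tilde{m}_s$ expresses $a'_S$ in terms of $s^*(a_Y)$ via precisely the recipe defining $a_S^{(s)}$, so $a'_S = \tilde{a}_S$ for every $S \in \Scal$.
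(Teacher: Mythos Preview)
Your proposal is correct and follows essentially the same approach as the paper: your isomorphisms $\kappa^{(s)}$ and $\kappa'^{(s)}$ are precisely the inverted unit arrows $\eta$ that the paper uses in \Cref{lem:asso_int-ext} to define $a_S^{(s)}$, and the subsequent strategy—independence via morphisms in $\Emb_{\Ucal}(S)$ reducing to ($a$ITS-2) over $\Ucal$, the pentagon via a single embedding, and ($a$ITS-2) via an embedding plus factorization—matches the paper's proof exactly. Your explicit uniqueness argument via ($a$ITS-2) at the closed immersion $s$ is a detail the paper leaves to the reader, but it is the natural one.
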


The proof is based on the following construction:

\begin{lem}\label{lem:asso_int-ext}
	Fix $S \in \Scal$. For every embedding $(Y;s) \in \Emb_{\Ucal}(S)$, define a natural isomorphism of functors $\Hbb(S) \times \Hbb(S) \times \Hbb(S) \rightarrow \Hbb(S)$
	\begin{equation}\label{a_S^s}
		a^{(s)} = a^{(s)}_{S}: (A \otimes_s B) \otimes_s C \xrightarrow{\sim} A \otimes_s (B \otimes_s C)
	\end{equation}
	by taking the composite
	\begin{equation*}
		\begin{tikzcd}[font=\small]
			(A \otimes_s B) \otimes_s C \arrow[equal]{d} &&& A \otimes_s (B \otimes_s C). \arrow[equal]{d} \\
			s^*(s_* s^* (s_* A \otimes s_* B) \otimes s_* C) & s^* ((s_* A \otimes s_* B) \otimes s_* C) \arrow{r}{a} \arrow{l}{\eta} & s^* (s_* A \otimes (s_* B \otimes s_* C)) \arrow{r}{\eta} & s^* (s_* A \otimes s_* s^* (s_* B \otimes s_* C)) \\
		\end{tikzcd}
	\end{equation*}
	Then the induced natural isomorphism of functors $\Hbb(S) \times \Hbb(S) \times \Hbb(S) \rightarrow \Hbb(S)$
	\begin{equation}\label{asso-int}
		\tilde{a} = \tilde{a}_S: (A \tilde{\otimes} B) \tilde{\otimes} C = (A \otimes_s B) \otimes_s C \xrightarrow{a^{(s)}} A \otimes_s (B \otimes_s C) = A \tilde{\otimes} (B \tilde{\otimes} C)
	\end{equation}
	is independent of the choice of $(Y;s) \in \Emb_{\Ucal}(S)$.
\end{lem}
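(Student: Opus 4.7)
The plan is to mimic the embedding-by-embedding strategy used throughout the paper, most directly in \Cref{lem_tildeA-indep}, \Cref{lem_prep-thm-mor-ext} and \Cref{lem:ext-monoint}. By the connectedness of $\Emb_{\Ucal}(S)$ realised via products (\Cref{lem:Emb}) together with the cocycle property of the transition maps $\phi_p$ recorded in \Cref{lem:ext-otimes}(3), the independence statement reduces to the following claim: for every morphism $p: (Y';s') \rightarrow (Y;s)$ in $\Emb_{\Ucal}(S)$, the square of natural isomorphisms of functors $\Hbb(S) \times \Hbb(S) \times \Hbb(S) \rightarrow \Hbb(S)$
\begin{equation*}
	\begin{tikzcd}
		(A \otimes_s B) \otimes_s C \arrow{r}{a^{(s)}} \arrow{d}{\phi_p} & A \otimes_s (B \otimes_s C) \arrow{d}{\phi_p} \\
		(A \otimes_{s'} B) \otimes_{s'} C \arrow{r}{a^{(s')}} & A \otimes_{s'} (B \otimes_{s'} C)
	\end{tikzcd}
\end{equation*}
is commutative, where the vertical arrows denote the obvious iterates of $\phi_p$ on the three tensor slots.

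Once the definitions of $a^{(s)}$, $a^{(s')}$ and $\phi_p$ are unwound, this square expands to a sizeable diagram in $\Hbb(S)$ whose pieces are the units $\eta$ and co-units $\epsilon$ of the adjunctions $(p^*, p_*)$ and $(s_*, s^*)$, the internal monoidality isomorphism $m_p$ at the smooth morphism $p: Y' \rightarrow Y$, and the associativity constraint $a$ of the input tensor structure evaluated both in $\Hbb(Y)$ and in $\Hbb(Y')$. I would decompose the diagram so that each inner cell commutes by exactly one of the following reasons: naturality of $\eta$, $\epsilon$ or $m$; axiom \hyperlink{mITS}{($m$ITS)} for the monoidality isomorphisms; or axiom ($a$ITS-2) for $a$ over $\Ucal$ applied to $p$, which is the only place where the comparison between $a$ in $\Hbb(Y)$ and $a$ in $\Hbb(Y')$ via $p^*$ and $m_p$ is genuinely used.

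The main obstacle I expect is organisational rather than conceptual: three tensor slots mean three copies of the pair $(\eta,\epsilon)$ and three copies of $m_p$ in play, so the diagram is noticeably larger than those appearing in \Cref{lem:ext-otimes}(1) or \Cref{lem_prep-thm-mor-ext}. The delicate step will be to thread the associativity constraint across the pull-back along $p$ at the right intermediate stage, so that exactly one inner cell is governed by ($a$ITS-2) while all remaining cells reduce to naturality or to \hyperlink{mITS}{($m$ITS)}. Once compatibility with each $\phi_p$ is established, the formula \eqref{asso-int} unambiguously defines the natural isomorphism $\tilde{a}_S$, and specialising $(Y;s)$ to $(S;\id_S)$ when $S \in \Ucal$ shows that $\tilde{a}_S$ restricts to $a_S$ as required for the subsequent \Cref{prop:asso_int-ext}.
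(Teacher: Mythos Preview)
Your proposal is correct and follows essentially the same route as the paper: reduce via connectedness of $\Emb_{\Ucal}(S)$ to compatibility with each transition map $\phi_p$, then unwind and decompose so that one central cell is governed by axiom ($a$ITS-2) over $\Ucal$ applied to $p$, with the remaining cells commuting by naturality and construction. One small omission: you do not address the preliminary check that the two unit arrows $\eta$ in the definition of $a^{(s)}$ are invertible (so that $a^{(s)}$ really is a natural isomorphism); the paper dispatches this via the usual localization-triangle argument, and you should mention it as well.
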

\begin{proof}
	The fact that the unit arrows $\eta: s^* ((s_* A \otimes s_* B) \otimes s_* C) \rightarrow s^*(s_* s^* (s_* A \otimes s_* B) \otimes s_* C)$ and $\eta: s^* (s_* A \otimes (s_* B \otimes s_* C)) \rightarrow s^* (s_* A \otimes s_* s^* (s_* B \otimes s_* C))$ are both invertible can be checked as usual via the associated localization triangles; we omit the details. It follows that the natural transformation \eqref{a_S^s} is indeed well-defined and invertible.
	 
	We are left to showing the independence statement. Using the connectedness of the category $\Emb_{\Ucal}(S)$ via direct products (\Cref{lem:Emb}), we are reduced to the following claim: For every morphism $p: (Y';s') \rightarrow (Y;s)$ in $\Emb_{\Ucal}(S)$, the diagram of functors $\Hbb(S) \times \Hbb(S) \times \Hbb(S) \rightarrow \Hbb(S)$
	\begin{equation*}
		\begin{tikzcd}[font=\small]
			(A \otimes_s B) \otimes_s C \arrow{r}{a^{(s)}} \arrow{d}{\phi_p} & A \otimes_s (B \otimes_s C) \arrow{d}{\phi_p} \\
			(A \otimes_s B) \otimes_{s'} C \arrow{d}{\phi_p} & A \otimes_{s'} (B \otimes_s C) \arrow{d}{\phi_p} \\
			(A \otimes_{s'} B) \otimes_{s'} C \arrow{r}{a^{(s')}} & A \otimes_{s'} (B \otimes_{s'} C)
		\end{tikzcd}
	\end{equation*}
	is commutative. Unwinding the various definitions, we obtain the more explicit diagram
	\begin{equation*}
		\begin{tikzcd}[font=\small]
			s^* ((s_* A \otimes s_* B) \otimes s_* C) \arrow{r}{a} \arrow{d}{\eta} & s^* (s_* A \otimes (s_* B \otimes s_* C)) \arrow{d}{\eta} \\
			s^* (s_* s^* (s_* A \otimes s_* B) \otimes s_* C) \arrow[equal]{d} & s^* (s_* A \otimes s_* s^* (s_* B \otimes s_* C)) \arrow[equal]{d} \\
			{s'}^* p^* (p_* s'_* s^* (s_* A \otimes s_* B) \otimes p_* s'_* C) & {s'}^* p^* (p_* {s'}_* A \otimes p_* {s'}_* s^* (s_* B \otimes s_* C)) \\
			{s'}^* (p^* p_* s'_* s^* (s_* A \otimes s_* B) \otimes p^* p_* s'_* C) \arrow{u}{m} \arrow{d}{\epsilon} & {s'}^* (p^* p_* {s'}_* A \otimes p^* p_* {s'}_* s^* (s_* B \otimes s_* C)) \arrow{u}{m} \arrow{d}{\epsilon} \\
			{s'}^* (s'_* s^* (s_* A \otimes s_* B) \otimes s'_* C) \arrow[equal]{d} & {s'}^* (s'_* A \otimes s'_* s^* (s_* B \otimes s_* C)) \arrow[equal]{d} \\
			{s'}^* (s'_* {s'}^* p^* (p_* s'_* A \otimes p_* s'_* B) \otimes s'_* C) & {s'}^* (s'_* A \otimes s'_* {s'}^* p^* (p_* s'_* B \otimes p_* s'_* C)) \\
			{s'}^* (s'_* {s'}^* (p^* p_* s'_* A \otimes p^* p_* s'_* B) \otimes s'_* C) \arrow{u}{m} \arrow{d}{\epsilon} & {s'}^* (s'_* A \otimes s'_* {s'}^* (p^* p_* s'_* B \otimes p^* p_* s'_* C)) \arrow{u}{m} \arrow{d}{\epsilon} \\
			{s'}^* (s'_* {s'}^* (s'_* A \otimes s'_* B) \otimes s'_* C) & {s'}^* (s'_* A \otimes s'_* {s'}^* (s'_* B \otimes s'_* C)) \\
			{s'}^* ((s'_* A \otimes s'_* B) \otimes s'_* C) \arrow{r}{a} \arrow{u}{\eta} & {s'}^*(s'_* A \otimes (s'_* B \otimes s'_* C)) \arrow{u}{\eta}
		\end{tikzcd}
	\end{equation*}
	that we decompose as
	\begin{equation*}
		\begin{tikzcd}[font=\small]
			\bullet \arrow{rrr}{a} \arrow{d}{\eta} &&& \bullet \arrow{d}{\eta} \\
			\bullet \arrow[equal]{d} & {s'}^* p^* ((p_* {s'}_* A \otimes p_* {s'}_* B) \otimes p_* {s'}_* C) \arrow{r}{a} \arrow[equal]{ul} & {s'}^* p^* (p_* {s'}_* A \otimes (p_* {s'}_* B \otimes p_* {s'}_* C)) \arrow[equal]{ur} & \bullet \arrow[equal]{d} \\
			\bullet &&& \bullet \\
			\bullet \arrow{u}{m} \arrow{d}{\epsilon} & {s'}^* (p^* (p_* {s'}_* A \otimes p_* {s'}_* B) \otimes p^* p_* {s'}_* C) \arrow{uu}{m} & {s'}^* (p^* p_* {s'}_* A \otimes p^* (p_* {s'}_* B) \otimes p_* {s'}_* C) \arrow{uu}{m} & \bullet \arrow{u}{m} \arrow{d}{\epsilon} \\
			\bullet \arrow[equal]{d} &&& \bullet \arrow[equal]{d} \\
			\bullet & {s'}^* ((p^* p_* {s'}_* A \otimes p^* p_* {s'}_* B) \otimes p^* p_* {s'}_* C) \arrow{uu}{m} \arrow{r}{a} \arrow{dd}{\epsilon} & {s'}^* (p^* p_* {s'}_* A \otimes (p^* p_* {s'}_* B \otimes p^* p_* {s'}_* C)) \arrow{uu}{m} \arrow{dd}{\epsilon} & \bullet \\
			\bullet \arrow{u}{m} \arrow{d}{\epsilon} &&& \bullet \arrow{u}{m} \arrow{d}{\epsilon} \\
			\bullet & {s'}^* ((p^* p_* {s'}_* A \otimes p^* p_* {s'}_* B) \otimes {s'}_* C) \arrow{dl}{\epsilon} & {s'}^* ({s'}_* A \otimes (p^* p_* {s'}_* B \otimes p^* p_* {s'}_* C)) \arrow{dr}{\epsilon} & \bullet \\
			\bullet \arrow{rrr}{a} \arrow{u}{\eta} &&& \bullet \arrow{u}{\eta}
		\end{tikzcd}
	\end{equation*}
	Here, the upper piece is commutative by naturality, the central piece is commutative by axiom ($a$ITS-2) over $\Ucal$, and the lower piece is commutative by construction. Hence it suffices to show that the left-most and right-most pieces are commutative as well. Let us show, for example, that the left-most piece is commutative; the commutativity of the right-most piece will then follow by symmetry. We can decompose the left-most piece as
	\begin{equation*}
		\begin{tikzcd}[font=\small]
			\bullet \arrow{d}{\eta} \\
			\bullet \arrow[equal]{d} &&& \bullet \arrow[equal]{ulll} \arrow{dl}{\eta} \\
			\bullet & {s'}^* p^* (p_* {s'}_* {s'}^* p^* (p_* {s'}_* A \otimes p_* {s'}_* B) \otimes p_* {s'}_* C) \arrow[equal]{l} & {s'}^* p^* (p_* p^* (p_* {s'}_* A \otimes p_* {s'}_* B) \otimes p_* {s'}_* C) \arrow{l}{\eta} \\
			\bullet \arrow{u}{m} \arrow{d}{\epsilon} &&& \bullet \arrow{uu}{m} \arrow{dl}{\eta} \\
			\bullet \arrow[equal]{d} & {s'}^* (p^* p_* {s'}_* {s'}^* p^* (p_* {s'}_* A \otimes p_* {s'}_* B) \otimes p^* p_* {s'}_* C) \arrow[equal]{ul} \arrow{dl}{\epsilon} \arrow{uu}{m} & {s'}^* (p^* p_* p^* (p_* {s'}_* A \otimes p_* {s'}_* B) \otimes p^* p_* {s'}_* C) \arrow{uu}{m} \arrow{l}{\eta} \\
			\bullet &&& \bullet \arrow{uu}{m} \arrow{dd}{\epsilon} \arrow{dl}{\eta} \\
			\bullet \arrow{u}{m} \arrow{d}{\epsilon} & {s'}^* (p^* p_* {s'}_* {s'}^* (p^* p_* {s'}_* A \otimes p^* p_* {s'}_* B) \otimes p^* p_* {s'}_* C) \arrow{l}{\epsilon} \arrow{uu}{m} & {s'}^* (p^* p_* (p^* p_* {s'}_* A \otimes p^* p_* {s'}_* B) \otimes p^* p_* {s'}_* C) \arrow{l}{\eta} \arrow{uu}{m} \arrow{dr}{\epsilon} \\
			\bullet &&& \bullet \arrow{dlll}{\epsilon} \arrow{ulll}{\epsilon} \\
			\bullet \arrow{u}{\eta}
		\end{tikzcd}
	\end{equation*}
	Here, all pieces are commutative by naturality and construction. This proves the claim.
\end{proof}

\begin{proof}[Proof of \Cref{prop:asso_int-ext}]
	We only prove that the natural isomorphisms \eqref{asso-int} define an internal associativity constraint on $(\tilde{\otimes},\tilde{m})$; the statements about its restriction to $\Ucal$ and its uniqueness are left to the interested reader.
	
	Let us check that the natural isomorphisms \eqref{asso-int} satisfy conditions ($a$ITS-1) and ($a$ITS-2).
	We start from condition ($a$ITS-1): given $S \in \Scal$, we have to show that the diagram of functors $\Hbb(S) \times \Hbb(S) \times \Hbb(S) \times \Hbb(S) \rightarrow \Hbb(S)$
	\begin{equation*}
		\begin{tikzcd}
			((A \tilde{\otimes} B) \tilde{\otimes} C) \tilde{\otimes} D \arrow{rr}{\tilde{a}} \arrow{d}{\tilde{a}} && (A \tilde{\otimes} B) \tilde{\otimes} (C \tilde{\otimes} D) \arrow{d}{\tilde{a}} \\
			(A \tilde{\otimes} (B \tilde{\otimes} C)) \tilde{\otimes} D \arrow{r}{\tilde{a}} & A \tilde{\otimes} ((B \tilde{\otimes} C) \tilde{\otimes} D) \arrow{r}{\tilde{a}} & A \tilde{\otimes} (B \tilde{\otimes} (C \tilde{\otimes} D))
		\end{tikzcd}
	\end{equation*}
	is commutative. To this end, choose an embedding $(Y;s) \in \Emb_{\Ucal}(S)$. Then, by construction, it suffices to show that the diagram
	\begin{equation*}
		\begin{tikzcd}[font=\small]
			((A \otimes_s B) \otimes_s C) \otimes_s D \arrow{rr}{a_S^{(s)}} \arrow{d}{a_S^{(s)}} && (A \otimes_s B) \otimes_s (C \otimes_s D) \arrow{d}{a_S^{(s)}} \\
			(A \otimes_s (B \otimes_s C)) \otimes_s D \arrow{r}{a_S^{(s)}} & A \otimes_s ((B \otimes_s C) \otimes_s D) \arrow{r}{a_S^{(s)}} & A \otimes_s (B \otimes_s (C \otimes_s D))
		\end{tikzcd}
	\end{equation*}
	is commutative. Unwinding the various definitions, we obtain the more explicit diagram
	\begin{equation*}
		\begin{tikzcd}[font=\small]
			s^* ((s_* s^* (A' \otimes B') \otimes C') \otimes D') \arrow{rr}{a} \arrow{d}{\eta} && s^* (s_* s^* (A' \otimes B') \otimes (C' \otimes D')) \arrow{d}{\eta} \\
			s^* (s_* s^* (s_* s^* (A' \otimes B') \otimes C') \otimes D') && s^* (s_* s^* (A' \otimes B') \otimes s_* s^* (C' \otimes D')) \\
			s^* (s_* s^* ((A' \otimes B') \otimes C') \otimes D') \arrow{u}{\eta} \arrow{d}{a} && s^* ((A' \otimes B') \otimes s_* s^* (C' \otimes D')) \arrow{u}{\eta} \arrow{d}{a} \\
			s^* (s_* s^* (A' \otimes (B' \otimes C')) \otimes D') \arrow{d}{\eta} && s^* (A' \otimes (B' \otimes s_* s^* (C' \otimes D'))) \arrow{d}{\eta} \\
			s^* (s_* s^* (A' \otimes s_* s^* (B' \otimes C')) \otimes D') && s^* (A' \otimes s_* s^* (B' \otimes s_* s^* (C' \otimes D'))) \\
			s^* ((A' \otimes s_* s^* (B' \otimes C')) \otimes D') \arrow{u}{\eta} \arrow{d}{a} && s^* (A' \otimes s_* s^* (B' \otimes (C' \otimes D'))) \arrow{u}{\eta} \arrow{d}{a} \\
			s^* (A' \otimes (s_* s^* (B' \otimes C') \otimes D')) \arrow{r}{\eta} & s^* (A' \otimes s_* s^* (s_* s^* (B' \otimes C') \otimes D')) & s^* (A' \otimes s_* s^* ((B' \otimes C') \otimes D')) \arrow{l}{\eta}
		\end{tikzcd}
	\end{equation*}
    where, for notational convenience, we have set $A' := s_* A$, $B' := s_* B$, $C' := s_* C$ and $D' := s_* D$. We can decompose the latter diagram as
    that we decompose as
    \begin{equation*}
    	\begin{tikzcd}[font=\small]
    		\bullet \arrow{rrrr}{a} \arrow{d}{\eta} &&&& \bullet \arrow{d}{\eta} \\
    		\bullet & s^* (((A' \otimes B') \otimes C') \otimes D') \arrow{rr}{a} \arrow{ddd}{a} \arrow{ul}{\eta} \arrow{dl}{\eta} && s^* ((A' \otimes B') \otimes (C' \otimes D')) \arrow{ddd}{a} \arrow{ur}{\eta} \arrow{dr}{\eta} & \bullet \\
    		\bullet \arrow{u}{\eta} \arrow{d}{a} &&&& \bullet \arrow{u}{\eta} \arrow{d}{a} \\
    		\bullet \arrow{d}{\eta} &&&& \bullet \arrow{d}{\eta} \\
    		\bullet & s^* ((A' \otimes (B' \otimes C')) \otimes D') \arrow{r}{a} \arrow{ul}{\eta} \arrow{dl}{\eta} & s^* (A' \otimes ((B' \otimes C') \otimes D')) \arrow{r}{a} \arrow{ddll}{\eta} \arrow{ddrr}{\eta} & s^* (A' \otimes (B' \otimes (C' \otimes D'))) \arrow{ur}{\eta} \arrow{dr}{\eta} & \bullet \\
    		\bullet \arrow{u}{\eta} \arrow{d}{a} &&&& \bullet \arrow{u}{\eta} \arrow{d}{a} \\
    		\bullet \arrow{rr}{\eta} && \bullet && \bullet \arrow{ll}{\eta}
    	\end{tikzcd}
    \end{equation*}
	Here, the central rectangle is commutative by axiom ($a$ITS-1) over $\Ucal$, while all the remaining pieces are commutative by naturality. This proves the claim.
	
	We now check condition ($a$ITS-2): given a morphism $f: T \rightarrow S$ in $\Scal$, we have to show that the diagram of functors $\Hbb(S) \times \Hbb(S) \times \Hbb(S) \rightarrow \Hbb(T)$
	\begin{equation*}
		\begin{tikzcd}
			(f^* A \tilde{\otimes} f^* B) \tilde{\otimes} f^* C \arrow{r}{\tilde{m}} \arrow{d}{\tilde{a}} & f^* (A \tilde{\otimes} B) \tilde{\otimes} f^* C \arrow{r}{\tilde{m}} & f^* ((A \tilde{\otimes} B) \tilde{\otimes} C) \arrow{d}{\tilde{a}} \\
			f^* A \tilde{\otimes} (f^* B \tilde{\otimes} f^* C) \arrow{r}{\tilde{m}} & f^* A \tilde{\otimes} f^* (B \tilde{\otimes} C) \arrow{r}{\tilde{m}} & f^* (A \tilde{\otimes} (B \tilde{\otimes} C))
		\end{tikzcd}
	\end{equation*}
	is commutative. To this end, choose an embedding $(Y;s) \in \Emb_{\Ucal}(S)$ and then a factorization $(X;t,p) \in \Fact_{\Scal}(s \circ f)$; note that the object $X$ automatically belongs to $\Ucal$ (\Cref{lem:Ucal}(2)), and so we obtain an embedding $(X;t) \in \Emb_{\Ucal}(T)$. Then, by construction, it suffices to show that the diagram
	\begin{equation*}
		\begin{tikzcd}[font=\small]
			(f^* A \otimes_t f^* B) \otimes_t f^* C \arrow{r}{m_f^{(s,t,p)}} \arrow{d}{a_T^{(t)}} & f^* (A \otimes_s B) \otimes_t f^* C \arrow{r}{m_f^{(s,t,p)}} & f^* ((A \otimes_s B) \otimes_s C) \arrow{d}{a_S^{(s)}} \\
			f^* A \otimes_t (f^* B \otimes_t f^* C) \arrow{r}{m_f^{(s,t,p)}} & f^* A \otimes_t f^* (B \otimes_s C) \arrow{r}{m_f^{(s,t,p)}} & f^* (A \otimes_s (B \otimes_s C))
		\end{tikzcd}
	\end{equation*}
	is commutative. Unwinding the various definitions, we obtain the more explicit diagram
	\begin{equation*}
		\begin{tikzcd}[font=\small]
			t^* (t_* t^* p^* (s_* A \otimes s_* B) \otimes t_* f^* C) \arrow[equal]{r} & t^* (t_* f^* s^* (s_* A \otimes s_* B) \otimes t_* f^* C) & t^* (p^* s_* s^* (s_* A \otimes s_* B) \otimes p^* s_* C) \arrow{l}{\sim} \arrow{d}{m} \\
			t^* (t_* t^* (p^* s_* A \otimes p^* s_* B) \otimes t_* f^* C) \arrow{u}{m} \isoarrow{d} && t^* p^* (s_* s^* (s_* A \otimes s_* B) \otimes s_* C) \arrow[equal]{d} \\
			t^* (t_* t^* (t_* f^* A \otimes t_* f^* B) \otimes t_* f^* C) && f^* s^* (s_* s^* (s_* A \otimes s_* B) \otimes s_* C) \\
			t^* ((t_* f^* A \otimes t_* f^* B) \otimes t_* f^* C) \arrow{u}{\eta} \arrow{d}{a} && f^* s^* ((s_* A \otimes s_* B) \otimes s_* C) \arrow{u}{\eta} \arrow{d}{a} \\
			t^* (t_* f^* A \otimes (t_* f^* B \otimes t_* f^* C)) \arrow{d}{\eta} && f^* s^* (s_* A \otimes (s_* B \otimes s_* C)) \arrow{d}{\eta} \\
			t^* (t_* f^* A \otimes t_* t^* (t_* f^* B \otimes t_* f^* C)) && f^* s^* (s_* A \otimes s_* s^* (s_* B \otimes s_* C)) \\
			t^* (t_* f^* A \otimes t_* t^* (p^* s_* B \otimes p^* s_* C)) \isoarrow{u} \arrow{d}{m} && t^* p^* (s_* A \otimes s_* s^* (s_* B \otimes s_* C)) \arrow[equal]{u} \\
			t^* (t_* f^* A \otimes t_* t^* p^* (s_* B \otimes s_* C)) \arrow[equal]{r} & t^* (t_* f^* A \otimes t_* f^* s^* (s_* B \otimes s_* C)) & t^* (p^* s_* A \otimes p^* s_* s^* (s_* B \otimes s_* C)) \arrow{u}{m} \arrow{l}{\sim}
		\end{tikzcd}
	\end{equation*}
	that we decompose as
	\begin{equation*}
		\begin{tikzcd}[font=\small]
			\bullet \arrow[equal]{rr} && \bullet && \bullet \arrow{ll}{\sim} \arrow{d}{m} \\
			\bullet \arrow{u}{m} \isoarrow{d} & t^* (p^* (s_* A \otimes s_* B) \otimes t_* f^* C) \arrow{ul}{\eta} & t^* (t_* t^* p^* (s_* A \otimes s_* B) \otimes s_* C) \arrow{ull}{\sim} && \bullet \arrow[equal]{d} \\
			\bullet & t^* ((p^* s_* A \otimes p^* s_* B) \otimes t_* f^* C) \arrow{u}{m} \arrow{ul}{\eta} \arrow{dl}{\sim} &&& \bullet \\
			\bullet \arrow{u}{\eta} \arrow{d}{a} & t^* ((p^* s_* A \otimes p^* s_* B) \otimes p^* s_* C) \isoarrow{u} \arrow{r}{m} \arrow{d}{a} & t^* (p^* (s_* A \otimes s_* B) \otimes p^* s_* C) \arrow{uu}{\eta} \arrow{r}{m} \arrow{uuurr}{\eta} \arrow{uul}{\sim} & t^* p^* ((s_* A \otimes s_* B) \otimes s_* C) \arrow{d}{a} \arrow{uur}{\eta} \arrow[equal]{r} & \bullet \arrow{u}{\eta} \arrow{d}{a} \\
			\bullet \arrow{d}{\eta} & t^* (p^* s_* A \otimes (p^* s_* B \otimes p^* s_* C)) \isoarrow{d} \arrow{r}{m} & t^* (p^* s_* A \otimes p^* (s_* B \otimes s_* C)) \arrow{r}{m} \arrow{dd}{\eta} \arrow{ddl}{\sim} \arrow{dddrr}{\eta} & t^* p^* (s_* A \otimes (s_* B \otimes s_* C)) \arrow[equal]{r} \arrow{ddr}{\eta} & \bullet \arrow{d}{\eta} \\
			\bullet & t^* (t_* f^* A \otimes (p^* s_* B \otimes p^* s_* C)) \arrow{d}{m} \arrow{dl}{\eta} \arrow{ul}{\sim} &&& \bullet \\
			\bullet \isoarrow{u} \arrow{d}{m} & t^* (t_* f^* A \otimes p^* (s_* B \otimes s_* C)) \arrow{dl}{\eta} & t^* (p^* s_* A \otimes t_* t^* p^* (s_* B \otimes s_* C)) \arrow{dll}{\sim} && \bullet \arrow[equal]{u} \\
			\bullet \arrow[equal]{rr} && \bullet && \bullet \arrow{u}{m} \arrow{ll}{\sim}
		\end{tikzcd}
	\end{equation*}
    Here, the central rectangle is commutative by axiom ($a$ITS-2) over $\Ucal$, while all the remaining pieces are commutative by naturality and by construction. This proves the claim and concludes the proof.
\end{proof}

In the same spirit, we study \textit{internal commutativity constraints} $c$ on $(\otimes,m)$: as in \cite[Defn.~4.1]{Ter23Fib}, by this we mean the datum of
\begin{itemize}
	\item for every $S \in \Scal$, a natural isomorphism of functors $\Hbb(S) \times \Hbb(S) \rightarrow \Hbb(S)$
	\begin{equation*}
		c = c_S: A \tilde{\otimes} B \xrightarrow{\sim} B \tilde{\otimes} A
	\end{equation*}
\end{itemize}
satisfying the following condition
\begin{enumerate}
	\item[($c$ITS-1)] For every $S \in \Scal$, the diagram of functors $\Hbb(S) \times \Hbb(S) \rightarrow \Hbb(S)$
	\begin{equation*}
		\begin{tikzcd}
			A \tilde{\otimes} B \arrow{r}{c} \arrow{dr}{\id} & B \tilde{\otimes} A \arrow{d}{c} \\
			& A \tilde{\otimes} B
		\end{tikzcd}
	\end{equation*}
	is commutative.
	\item[($c$ITS-2)] For every morphism $f: T \rightarrow S$ in $\Scal$, the diagram of functors $\Hbb(S) \times \Hbb(S) \rightarrow \Hbb(T)$
	\begin{equation*}
		\begin{tikzcd}
			f^* A \tilde{\otimes} f^* B \arrow{r}{c} \arrow{d}{\tilde{m}} & f^* B \tilde{\otimes} f^* A \arrow{d}{\tilde{m}} \\
			f^* (A \tilde{\otimes} B)  \arrow{r}{c} & f^* (B \tilde{\otimes} A)
		\end{tikzcd}
	\end{equation*}
	is commutative.
\end{enumerate}
Note that every internal commutativity constraint on $(\otimes,m)$ defines by restriction an analogous constraint on the internal tensor structure $(\otimes,m)$ over $\Ucal$. In the reverse direction, we have the following result:

\begin{prop}\label{prop:comm_int-ext}
	Let $c$ be an internal commutativity constraint on $(\otimes,m)$. Then there exists a unique internal commutativity constraint $\tilde{c}$ on $(\tilde{\otimes},\tilde{m})$ whose restriction to $\Ucal$ coincides with $c$.
\end{prop}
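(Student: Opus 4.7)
My plan is to mimic the strategy used for \Cref{prop:asso_int-ext}, transposing every step to the simpler case of a binary symmetry constraint.

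First I would define, for each $S \in \Scal$ and each embedding $(Y;s) \in \Emb_{\Ucal}(S)$, a natural isomorphism of functors $\Hbb(S) \times \Hbb(S) \rightarrow \Hbb(S)$
\begin{equation*}
	c^{(s)} = c_S^{(s)}: A \otimes_s B \xrightarrow{\sim} B \otimes_s A
\end{equation*}
simply as the image under $s^*$ of the commutativity isomorphism $c: s_* A \otimes s_* B \xrightarrow{\sim} s_* B \otimes s_* A$ available over $Y \in \Ucal$. I would then show that the induced natural isomorphism
\begin{equation*}
	\tilde{c} = \tilde{c}_S: A \tilde{\otimes} B = A \otimes_s B \xrightarrow{c^{(s)}} B \otimes_s A = B \tilde{\otimes} A
\end{equation*}
is independent of the embedding $(Y;s)$. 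Using the connectedness of $\Emb_{\Ucal}(S)$ via direct products (\Cref{lem:Emb}), this reduces to checking that, for every morphism $p: (Y';s') \rightarrow (Y;s)$ in $\Emb_{\Ucal}(S)$, the square
\begin{equation*}
	\begin{tikzcd}
		A \otimes_s B \arrow{r}{c^{(s)}} \arrow{d}{\phi_p} & B \otimes_s A \arrow{d}{\phi_p} \\
		A \otimes_{s'} B \arrow{r}{c^{(s')}} & B \otimes_{s'} A
	\end{tikzcd}
\end{equation*}
commutes. Unfolding the definition of $\phi_p$ from \Cref{lem:ext-otimes}(1), this expands into a diagram in which the only non-formal piece is an application of axiom ($c$ITS-2) for the morphism $p: Y' \rightarrow Y$ over $\Ucal$; all remaining squares commute by the naturality of $c$ with respect to the unit and counit of $(p^*,p_*)$ and of $({s'}^*,{s'}_*)$.

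Next I would verify conditions ($c$ITS-1) and ($c$ITS-2) for $\tilde{c}$. For ($c$ITS-1), fixing $S \in \Scal$ and choosing any $(Y;s) \in \Emb_{\Ucal}(S)$, the assertion $\tilde{c} \circ \tilde{c} = \id$ is the image under $s^*$ of the identity $c \circ c = \id$ satisfied over $Y$, so it follows immediately from ($c$ITS-1) over $\Ucal$. For ($c$ITS-2), fix a morphism $f: T \rightarrow S$ in $\Scal$, choose $(Y;s) \in \Emb_{\Ucal}(S)$, and then a factorization $(X;t,p) \in \Fact_{\Scal}(s \circ f)$, which yields an embedding $(X;t) \in \Emb_{\Ucal}(T)$ by \Cref{lem:Ucal}(2). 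I would then need to show that the square
\begin{equation*}
	\begin{tikzcd}
		f^* A \otimes_t f^* B \arrow{r}{c_T^{(t)}} \arrow{d}{m_f^{(s,t,p)}} & f^* B \otimes_t f^* A \arrow{d}{m_f^{(s,t,p)}} \\
		f^* (A \otimes_s B) \arrow{r}{c_S^{(s)}} & f^* (B \otimes_s A)
	\end{tikzcd}
\end{equation*}
commutes. Unwinding $m_f^{(s,t,p)}$ from \Cref{lem:ext-monoint}, this diagram expands into a larger diagram whose central rectangle is exactly the instance of ($c$ITS-2) over $\Ucal$ applied to $p: X \rightarrow Y$, with the remaining squares commuting by the naturality of $c$ with respect to unit/counit maps and the connection isomorphisms.

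The hardest part will be the ($c$ITS-2) verification, where one must keep track of the auxiliary composite \eqref{iso:aux-otimes} intervening in $m_f^{(s,t,p)}$ as it is pushed past $c$; this is formally the same kind of diagram chase as the central piece in the proof of \Cref{prop:asso_int-ext}, but shorter, since commutativity is a binary and involutive constraint. Finally, the fact that the restriction of $\tilde{c}$ to $\Ucal$ recovers $c$ is immediate from taking $Y = S$ and $s = \id_S$ in the construction, and uniqueness follows because any internal commutativity constraint $c'$ on $(\tilde{\otimes},\tilde{m})$ extending $c$ is forced by ($c$ITS-2) to satisfy $c'_S = s^*(c_Y)$ for every $(Y;s) \in \Emb_{\Ucal}(S)$, hence to coincide with $\tilde{c}$.
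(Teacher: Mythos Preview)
Your proposal is correct and follows essentially the same route as the paper's own proof (via \Cref{lem:comm_int-ext} and the subsequent verification of ($c$ITS-1) and ($c$ITS-2)). If anything, you are slightly more precise: where the paper attributes every square in the independence check and in the ($c$ITS-2) verification to ``naturality'', you correctly isolate the one square in each case that genuinely requires axiom ($c$ITS-2) over $\Ucal$ for the smooth morphism $p$.
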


As in the case of associativity constraints, the proof is based on an auxiliary construction:

\begin{lem}\label{lem:comm_int-ext}
	Fix $S \in \Scal$. For every embedding $(Y;s) \in \Emb_{\Ucal}(S)$, define a natural transformation of functors $\Hbb(S) \times \Hbb(S) \rightarrow \Hbb(S)$
	\begin{equation*}
		c_S^{(s)}: A \otimes_s B \xrightarrow{\sim} (B \otimes_s A)
	\end{equation*}
	by taking the composite
	\begin{equation*}
		A \otimes_s B = s^* (s_* A \otimes s_* B) \xrightarrow{c} s^* (s_* B \otimes s_* A) = (B \otimes_s A).
	\end{equation*}
	Then the induced isomorphism of functors $\Hbb(S) \times \Hbb(S) \rightarrow \Hbb(S)$
	\begin{equation}\label{comm-int}
		\tilde{c} = \tilde{c}_S: A \tilde{\otimes} B = A \otimes_s B \xrightarrow{c_S^{(s)}} (B \otimes_s A)  = (B \tilde{\otimes} A)
	\end{equation}
	is independent of the choice of $(Y;s) \in \Emb_{\Ucal}(S)$.
\end{lem}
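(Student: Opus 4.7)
The plan is to follow exactly the same strategy used for the analogous independence statement in \Cref{lem:asso_int-ext}. Note first that the composite defining $c_S^{(s)}$ is manifestly a natural isomorphism, since it is simply $s^*$ applied to the internal commutativity constraint $c$ on $\Hbb(Y)$ over $\Ucal$; hence no preliminary invertibility check is required, and one passes directly to the independence statement. Using the connectedness of $\Emb_{\Ucal}(S)$ via direct products (\Cref{lem:Emb}), the independence of the natural isomorphism \eqref{comm-int} on the choice of $(Y;s)$ reduces to showing that, for every morphism $p: (Y';s') \rightarrow (Y;s)$ in $\Emb_{\Ucal}(S)$, the diagram of functors $\Hbb(S) \times \Hbb(S) \rightarrow \Hbb(S)$
\begin{equation*}
\begin{tikzcd}
A \otimes_s B \arrow{r}{c_S^{(s)}} \arrow{d}{\phi_p} & B \otimes_s A \arrow{d}{\phi_p} \\
A \otimes_{s'} B \arrow{r}{c_S^{(s')}} & B \otimes_{s'} A
\end{tikzcd}
\end{equation*}
is commutative, where $\phi_p$ denotes the natural transformation introduced in \Cref{lem:ext-otimes}(1).

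Unwinding the definitions of $c_S^{(s)}$, $c_S^{(s')}$ and $\phi_p$, one obtains an explicit diagram in $\Hbb(S)$ whose arrows are built from the internal commutativity constraints $c$ over $Y$ and $Y'$, the internal monoidality isomorphisms $m_p$ for the smooth morphism $p$, and the unit and counit arrows of the adjunction $(p^*,p_*)$. The plan is to decompose this explicit diagram in a way entirely parallel to the decomposition used in the proof of \Cref{lem:asso_int-ext}: a single central rectangle expresses the compatibility of $c$ with the inverse image along $p$, which is precisely axiom ($c$ITS-2) applied to $p$ as a morphism in $\Ucal$; all the remaining subdiagrams commute by naturality of $\eta$, $\epsilon$, $m$, and $c$, and by the obvious swap of the two tensor factors in each individual step building $\phi_p$.

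The main obstacle is purely combinatorial bookkeeping in writing out and verifying each face of the explicit diagram. However, this task is substantially lighter than the corresponding step in \Cref{lem:asso_int-ext}, because commutativity involves only two tensor slots and requires no re-bracketing; in particular, beyond axiom ($c$ITS-2) over $\Ucal$ and the constructions already established in \Cref{lem:ext-otimes}, no further conceptual input is needed.
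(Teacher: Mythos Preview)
Your proposal is correct and matches the paper's approach: both reduce via connectedness of $\Emb_{\Ucal}(S)$ to a compatibility square for a morphism $p:(Y';s')\to(Y;s)$, then unwind $\phi_p$ and $c_S^{(s)}$ into the same explicit diagram and verify each inner piece. Your identification of the middle square as an instance of axiom ($c$ITS-2) over $\Ucal$ is in fact more precise than the paper, which simply attributes all pieces to ``naturality''; otherwise the arguments coincide.
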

\begin{proof}
	Using the connectedness of the category $\Emb_{\Ucal}(S)$ via direct products (\Cref{lem:Emb}), we are reduced to proving the following claim: For every morphism $p: (Y';s') \rightarrow (Y;s)$ in $\Emb_{\Ucal}(S)$, the diagram of functors $\Hbb(S) \times \Hbb(S) \rightarrow \Hbb(S)$
	\begin{equation*}
		\begin{tikzcd}
			A \otimes_s B \arrow{r}{c_S^{(s)}}  \arrow{d}{\phi_p} & B \otimes_s A \arrow{d}{\phi_p} \\
			A \otimes_{s'} B \arrow{r}{c_S^{(s')}} & B \otimes_{s'} A 
		\end{tikzcd}
	\end{equation*}
	is commutative. Unwinding the various definitions, we obtain the outer part of the diagram
	\begin{equation*}
		\begin{tikzcd}[font=\small]
			s^* (s_* A \otimes s_* B) \arrow{r}{c} \arrow[equal]{d} & s^* (s_* B \otimes s_* A) \arrow[equal]{d} \\
			{s'}^* p^* (p_* s'_* A \otimes p_* s'_* B) \arrow{r}{c} & {s'}^* p^* (p_* s'_* B \otimes p_* s'_* A) \\
			{s'}^* (p^* p_* s'_* A \otimes p^* p_* s'_* B) \arrow{u}{m} \arrow{d}{\epsilon} \arrow{r}{c} & {s'}^* (p^* p_* s'_* B \otimes p^* p_* s'_* A) \arrow{u}{m} \arrow{d}{\epsilon} \\
			{s'}^* (s'_* A \otimes s'_* B) \arrow{r}{c} & {s'}^* (s'_* B \otimes s'_* A)
		\end{tikzcd}
	\end{equation*}
	where all inner pieces are commutative by naturality. This proves the claim.
\end{proof}

\begin{proof}[Proof of \Cref{prop:comm_int-ext}]
	As in the case of associativity constraints, we only prove that the natural isomorphisms \eqref{comm-int} define an internal commutativity constraint on $(\tilde{\otimes},\tilde{m})$; the other two statements are left to the interested reader.
	
	Let us check that the natural isomorphisms \eqref{comm-int} satisfy conditions ($c$ITS-1) and ($c$ITS-2).
	We start from condition ($c$ITS-1): given $S \in \Scal$, we have to show that the diagram of functors $\Hbb(S) \times \Hbb(S) \rightarrow \Hbb(S)$
	\begin{equation*}
		\begin{tikzcd}
			A \tilde{\otimes} B \arrow{r}{\tilde{c}} \arrow{dr}{\id} & B \tilde{\otimes} A \arrow{d}{\tilde{c}} \\
			& A \tilde{\otimes} B
		\end{tikzcd}
	\end{equation*}
	is commutative. To this end, choose $(Y;s) \in \Emb_{\Ucal}(S)$. Then, by construction, it suffices to show that the diagram
	\begin{equation*}
		\begin{tikzcd}[font=\small]
			A \otimes_s B \arrow{dr}{\id} \arrow{r}{c_S^{(s)}} & B \otimes_s A \arrow{d}{c_S^{(s)}} \\
			& A \otimes_s B
		\end{tikzcd}
	\end{equation*}
	is commutative. Unwinding the definitions, we can write the latter more explicitly as the image under $s^*$ of the diagram
	\begin{equation*}
		\begin{tikzcd}[font=\small]
			s_* A \otimes s_* B \arrow{dr}{\id} \arrow{r}{c} & s_* B \otimes s_* A \arrow{d}{c} \\
			& s_* A \otimes s_* B
		\end{tikzcd}
	\end{equation*}
	which is commutative by axiom ($c$ITS-1) over $\Ucal$.
	
	We now check condition ($c$ITS-2): given a morphism $f: T \rightarrow S$ in $\Scal$, we have to show that the diagram of functors $\Hbb(S) \times \Hbb(S) \rightarrow \Hbb(T)$
	\begin{equation*}
		\begin{tikzcd}
			f^* A \tilde{\otimes} f^* B \arrow{r}{\tilde{c}} \arrow{d}{\tilde{m}} & f^* B \tilde{\otimes} f^* A \arrow{d}{\tilde{m}} \\
			f^* (A \tilde{\otimes} B) \arrow{r}{\tilde{c}} & f^* (B \tilde{\otimes} A)
		\end{tikzcd}
	\end{equation*}
	is commutative. To this end, choose an embedding $(Y;s) \in \Emb_{\Ucal}(S)$ and then a factorization $(X;t,p) \in \Fact_{\Scal}(s \circ f)$; note that the object $X$ automatically belongs to $\Ucal$, and so we obtain an embedding $(X;t) \in \Emb_{\Ucal}(T)$. Then, by construction, it suffices to show that the diagram
	\begin{equation*}
		\begin{tikzcd}[font=\small]
			f^* A \otimes_t f^* B \arrow{r}{c_T^{(t)}} \arrow{d}{m_f^{(s,t,p)}} & f^* B \otimes_t f^* A \arrow{d}{m_f^{(s,t,p)}} \\
			f^* (A \otimes_s B) \arrow{r}{c_S^{(s)}} & f^* (B \otimes_s A)
		\end{tikzcd}
	\end{equation*}
	is commutative. Unwinding the various definitions, we obtain the outer part of the diagram
	\begin{equation*}
		\begin{tikzcd}[font=\small]
			t^* (t_* f^* A \otimes t_* f^* B) \arrow{r}{c} & t^* (t_* f^* B \otimes t_* f^* A) \\
			t^* (p^* s_* A \otimes p^* s_* B) \arrow{r}{c} \isoarrow{u} \arrow{d}{m} & t^* (p^* s_* B \otimes p^* s_* A) \isoarrow{u} \arrow{d}{m} \\
			t^* p^* (s_* A \otimes s_* B) \arrow{r}{c} \arrow[equal]{d} & t^* p^* (s_* B \otimes s_* A) \arrow[equal]{d} \\
			f^* s^* (s_* A \otimes s_* B) \arrow{r}{c} & f^* s^* (s_* B \otimes s_* A)
		\end{tikzcd}
	\end{equation*}
	where all pieces are commutative by naturality. This proves the claim and concludes the proof.
\end{proof}

In the same spirit, we study \textit{internal unit constraints} $u$ on $(\tilde{\otimes},\tilde{m})$: as in \cite[Defn.~5.1]{Ter23Fib}, by this we mean the datum of
\begin{itemize}
	\item a section $\unit$ of the $\Scal$-fibered category $\Hbb$, called the \textit{unit section},
	\item for every $S \in \Scal$, two natural isomorphisms of functors $\Hbb(S) \rightarrow \Hbb(S)$
	\begin{equation*}
		u_r = u_{r,S}: A \tilde{\otimes} \unit_S \xrightarrow{\sim} A, \qquad \qquad u_l = u_{l,S}: \unit_S \tilde{\otimes} B \xrightarrow{\sim} B
	\end{equation*}
\end{itemize}
satisfying the following conditions:
\begin{enumerate}
	\item[($u$ITS-0)] For every $S \in \Scal$, the diagram in $\Hbb(S)$
	\begin{equation*}
		\begin{tikzcd}
			\unit_S \otimes \unit_S \arrow{dr}{u_r} \arrow[equal]{rr} && \unit_S \otimes \unit_S \arrow{dl}{u_l} \\
			& \unit_S
		\end{tikzcd}
	\end{equation*}
	is commutative.
	\item[($u$ITS-1)] For every morphism $f: T \rightarrow S$ in $\Scal$, the two diagrams of functors $\Hbb(S) \rightarrow \Hbb(T)$
	\begin{equation*}
		\begin{tikzcd}
			f^* A \tilde{\otimes} f^* \unit_S \arrow{r}{\unit^*} \arrow{d}{\tilde{m}} & f^* A \tilde{\otimes} \unit_T \arrow{d}{u_r} \\
			f^* (A \tilde{\otimes} \unit_S) \arrow{r}{u_r}  & f^* A
		\end{tikzcd}
		\qquad
		\begin{tikzcd}
			f^* \unit_S \tilde{\otimes} f^* B  \arrow{d}{\tilde{m}} \arrow{r}{\unit^*} & \unit_T \tilde{\otimes} f^* B \arrow{d}{u_l} \\
			f^* (\unit_S \tilde{\otimes} B)  \arrow{r}{u_l} & f^* B
		\end{tikzcd}
	\end{equation*}
	are commutative.
\end{enumerate}
Note that every internal unit constraint on $(\tilde{\otimes},\tilde{m})$ defines by restriction an analogous constraint on the internal tensor structure $(\otimes,m)$ over $\Ucal$. In the reverse direction, we have the following result:

\begin{prop}\label{prop:ext-unit}
	Let $u$ be an internal unit constraint on $(\otimes,m)$ with unit section $\unit$; let $\tilde{\unit}$ denote the section of $\Hbb$ over $\Scal$ extending $\unit$ via \Cref{prop:ext-sect}. Then there exists a unique internal unit constraint $\tilde{u}$ on $(\tilde{\otimes},\tilde{m})$ with unit section $\tilde{\unit}$ whose restriction to $\Ucal$ coincides with $u$.
\end{prop}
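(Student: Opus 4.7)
The strategy follows the pattern of \Cref{prop:asso_int-ext} and \Cref{prop:comm_int-ext}. For every $S \in \Scal$ and every embedding $(Y;s) \in \Emb_{\Ucal}(S)$, one first constructs natural isomorphisms of functors $\Hbb(S) \rightarrow \Hbb(S)$
\[
u_r^{(s)}: A \otimes_s \tilde{\unit}_S \xrightarrow{\sim} A, \qquad u_l^{(s)}: \tilde{\unit}_S \otimes_s B \xrightarrow{\sim} B.
\]
By \Cref{prop:ext-sect}, $\tilde{\unit}_S$ is canonically identified with $s^* \unit_Y$, so $s_* \tilde{\unit}_S = s_* s^* \unit_Y$. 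The unit arrow $\eta: s^*(s_* A \otimes \unit_Y) \rightarrow s^*(s_* A \otimes s_* s^* \unit_Y) = A \otimes_s \tilde{\unit}_S$ is invertible: via the localization triangle for the open complement $u: U \rightarrow Y$ of $s$, its cofiber is $s^*(s_* A \otimes u_{\#} u^* \unit_Y)$, which vanishes by the projection formula (valid since $(\otimes,m)$ is localic) combined with the identities $u^* s_* = 0$ and $s^* u_{\#} = 0$. I then define $u_r^{(s)}$ as the composite
\[
A \otimes_s \tilde{\unit}_S = s^*(s_* A \otimes s_* s^* \unit_Y) \xleftarrow{\eta} s^*(s_* A \otimes \unit_Y) \xrightarrow{u_r} s^* s_* A \xleftarrow{\eta} A,
\]
where the last arrow is the inverse of the unit arising from full faithfulness of $s_*$; the isomorphism $u_l^{(s)}$ is constructed symmetrically.

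Next I would establish independence of the embedding: using the connectedness of $\Emb_{\Ucal}(S)$ via direct products (\Cref{lem:Emb}), the task reduces to showing, for every morphism $p: (Y';s') \rightarrow (Y;s)$ in $\Emb_{\Ucal}(S)$, that $u_r^{(s)} = u_r^{(s')} \circ \phi_p$ (and analogously for $u_l$), where $\phi_p$ is the natural isomorphism of \Cref{lem:ext-otimes}(1). Expanding the definitions yields a diagram commuting by naturality and by axiom ($u$ITS-1) over $\Ucal$ applied to $p$. Colimiting over the trivial groupoid $\Gcal_{\Ucal}(S)$ then produces well-defined natural isomorphisms $\tilde{u}_r: A \tilde{\otimes} \tilde{\unit}_S \xrightarrow{\sim} A$ and $\tilde{u}_l: \tilde{\unit}_S \tilde{\otimes} B \xrightarrow{\sim} B$.

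It remains to verify axioms ($u$ITS-0) and ($u$ITS-1) for $(\tilde{\unit}, \tilde{u}_r, \tilde{u}_l)$, and to prove uniqueness. Axiom ($u$ITS-0) is verified by choosing any $(Y;s) \in \Emb_{\Ucal}(S)$ and pulling both $\tilde{u}_r$ and $\tilde{u}_l$ on $\tilde{\unit}_S \tilde{\otimes} \tilde{\unit}_S$ back through the pair of isomorphisms $s^*(\unit_Y \otimes \unit_Y) \xrightarrow{\sim} s^*(s_* \tilde{\unit}_S \otimes s_* \tilde{\unit}_S)$ (on each tensor factor), so that the equality reduces via naturality to axiom ($u$ITS-0) for $\unit_Y$ over $\Ucal$. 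Axiom ($u$ITS-1) along $f: T \rightarrow S$ is verified by choosing $(Y;s) \in \Emb_{\Ucal}(S)$ and a factorization $(X;t,p) \in \Fact_{\Scal}(s \circ f)$ (so $X \in \Ucal$ by \Cref{lem:Ucal}(2)); unwinding $\tilde{m}$ via \Cref{lem:ext-monoint}, $\tilde{u}_r$ via the construction above, and the section-transition isomorphism $\tilde{\unit}^*$ via \Cref{lem_tildeA-indep}, the resulting diagram decomposes into pieces commuting by naturality, by axiom ($\Scal$-fib-1), by the section axiom for $\unit$ over $\Ucal$, and by a central subdiagram that is precisely axiom ($u$ITS-1) for the smooth morphism $p$ applied over $\Ucal$. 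This last diagram chase, involving simultaneous commutation of $\eta$, $\epsilon$, $m$, and the various transition isomorphisms, is the main technical obstacle. Uniqueness follows since any competing extension $\tilde{u}'$ must satisfy ($u$ITS-1) along every closed immersion $s: S \rightarrow Y$ with $Y \in \Ucal$, which, via the same formula, determines $\tilde{u}'_S$ from $u_Y$.
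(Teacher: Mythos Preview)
Your proposal is correct and follows essentially the same approach as the paper: construct $u_r^{(s)}$ and $u_l^{(s)}$ for each embedding via the localization-triangle argument, prove independence of the embedding by reducing to the inverse-image compatibility axiom for $u$ along $p$, and then verify the two axioms for $\tilde{u}$ by choosing an embedding (and, for the second axiom, a factorization). Two small points of precision: the arrow $s^* s_* A \to A$ is the \emph{counit} $\epsilon$ of the adjunction $s^* \dashv s_*$ (invertible since $s_*$ is fully faithful), not the inverse of a unit; and in the independence step the transition from $A \otimes_s \unit_S^{(s)}$ to $A \otimes_{s'} \unit_S^{(s')}$ involves both $\phi_p$ (for the tensor) and $\alpha_p$ (for the unit section), so the diagram you must check commutativity of has the shape
\[
\begin{tikzcd}
A \otimes_s \unit_S^{(s)} \arrow{drr}{u_r^{(s)}} \arrow{d}{\alpha_p} \\
A \otimes_s \unit_S^{(s')} \arrow{d}{\phi_p} && A \\
A \otimes_{s'} \unit_S^{(s')} \arrow{urr}{u_r^{(s')}}
\end{tikzcd}
\]
rather than a single square.
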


As in the previous two cases, the proof is based on an auxiliary construction:

\begin{lem}\label{lem:ext-unit}
	Fix $S \in \Scal$. For every embedding $(Y;s) \in \Emb_{\Ucal}(S)$, define a natural isomorphism of functors $\Hbb(S) \rightarrow \Hbb(S)$
		\begin{equation}\label{u_S^s}
			u_{r}^{(s)} = u_{r,S}^{(s)}: A \otimes_s \unit_S^{(s)} \xrightarrow{\sim} A, \qquad u_{l}^{(s)} = u_{l,S}^{(s)}: \unit_S^{(s)} \otimes_s B \xrightarrow{\sim} B
		\end{equation}
		by taking the composites
		\begin{equation*}
			\begin{tikzcd}
				A \otimes_s \unit_S^{(s)} \arrow[equal]{d} & A \\
				s^*(s_* A \otimes s_* s^* \unit_Y)  \\
				s^*(s_* A \otimes \unit_Y) \arrow{u}{\eta} \arrow{r}{u_r} & s^* s_* A \arrow{uu}{\epsilon}
			\end{tikzcd}
			\qquad
			\begin{tikzcd}
				\unit_S^{(s)} \otimes_s B \arrow[equal]{d} & B. \\
				s^*(s_* s^* \unit_Y \otimes s_* B) \\
				s^* (\unit_Y \otimes s_* B) \arrow{u}{\eta} \arrow{r}{u_l} & s^* s_* B \arrow{uu}{\epsilon}
			\end{tikzcd}
		\end{equation*}
		respectively.
		Then the induced isomorphisms of functors $\Hbb(S) \rightarrow \Hbb(S)$
		\begin{equation}\label{formula:ext-unit}
			\tilde{u}_{r} = \tilde{u}_{r,S}: A \tilde{\otimes} \tilde{\unit}_S = A \otimes_{s} \unit_S^{(s)} \xrightarrow{u_{r}^{(s)}} A, \qquad \tilde{u}_{l} = \tilde{u}_{l,S}: \tilde{\unit}_S \tilde{\otimes} B = \unit_S^{(s)} \otimes_{s} B \xrightarrow{u_{l}^{(s)}} B
		\end{equation}
		are independent of the choice of $(Y;s) \in \Emb_{\Ucal}(S)$.
\end{lem}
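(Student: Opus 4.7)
The plan is to mimic the strategy of \Cref{lem:asso_int-ext} and \Cref{lem:comm_int-ext} and adapt it to the unit case, with the additional wrinkle that the transitions now involve both the $\otimes$-transition maps $\phi_p$ of \Cref{lem:ext-otimes} and the section transitions $\alpha_p$ from \Cref{lem_sect} applied to the unit section $\unit$.

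First, I would check that the natural transformations $u_{r}^{(s)}$ and $u_{l}^{(s)}$ defined by the composites \eqref{u_S^s} are genuinely invertible. By construction, this reduces to showing that the unit arrow $\eta: s^*(s_* A \otimes \unit_Y) \to s^*(s_* A \otimes s_* s^* \unit_Y)$ is invertible (and symmetrically for $u_l^{(s)}$). Letting $u: U \to Y$ denote the open immersion complementary to $s$, the associated localization triangle reduces this to the vanishing of $s^*(s_* A \otimes u_{\#} u^* \unit_Y)$, which follows from the projection formula (since $(\otimes, m)$ is localic), giving an isomorphism $s_* A \otimes u_{\#} u^* \unit_Y \simeq u_{\#}(u^* s_* A \otimes u^* \unit_Y)$, combined with the vanishings $u^* \circ s_* = 0$ and $s^* \circ u_{\#} = 0$ coming from the localic nature of $\Hbb$.

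Next, by the connectedness of $\Emb_{\Ucal}(S)$ via products (\Cref{lem:Emb}), the independence statement is equivalent to the following claim: for every morphism $p: (Y';s') \to (Y;s)$ in $\Emb_{\Ucal}(S)$, the outer diagram of functors $\Hbb(S) \to \Hbb(S)$
\begin{equation*}
\begin{tikzcd}[font=\small]
A \otimes_s \unit_S^{(s)} \arrow{r}{u_{r}^{(s)}} \arrow{d}{\mathrm{id} \otimes_s \alpha_p} & A \arrow[equal]{dd} \\
A \otimes_s \unit_S^{(s')} \arrow{d}{\phi_p} & \\
A \otimes_{s'} \unit_S^{(s')} \arrow{r}{u_{r}^{(s')}} & A
\end{tikzcd}
\end{equation*}
is commutative, together with the analogous diagram for $u_{l}^{(s)}$. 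Here, the composite of the two left-hand vertical arrows encodes precisely the coherence between the transition $\phi_p$ of the internal tensor product and the transition $\alpha_p$ of the unit section; this is the transition that identifies the presentations $A \tilde{\otimes} \tilde{\unit}_S = A \otimes_s \unit_S^{(s)}$ and $A \tilde{\otimes} \tilde{\unit}_S = A \otimes_{s'} \unit_S^{(s')}$.

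To verify this commutativity, I would expand the definitions of $\phi_p$, $\alpha_p$, $u_{r}^{(s)}$, $u_{r}^{(s')}$, and the $\unit$-transition $\unit_p^*: p^* \unit_Y \xrightarrow{\sim} \unit_{Y'}$ into a large pasting diagram in $\Hbb(S)$, and decompose it into elementary cells. The internal cells will be commutative by one of the following: naturality of $\eta$, $\epsilon$, $m$, or $u_r$; axiom ($m$ITS) applied to $p$; axiom ($u$ITS-1) over $\Ucal$ applied to $p$ (which governs the interaction of $p^*$ with the unit constraint $u_r$ on $Y$); and axiom ($\Scal$-sect) for the unit section $\unit$, which is exactly the content of $\alpha_p$ being defined via $\unit_p^*$. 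The case of $u_l^{(s)}$ is entirely symmetric.

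The main obstacle is purely diagrammatic: as in the analogous proofs for associativity and commutativity constraints in this section, the bookkeeping is heavier than the conceptual content, but no new ideas are needed beyond a careful juxtaposition of a single instance of ($u$ITS-1) over $\Ucal$ with naturality cells and the axioms defining the transitions $\phi_p$ and $\alpha_p$. I would therefore present the verification in the same style as the proofs of \Cref{lem:asso_int-ext} and \Cref{lem:comm_int-ext}, concluding with the observation that the case of $u_{l}^{(s)}$ follows by the same argument with the roles of the two tensor factors interchanged.
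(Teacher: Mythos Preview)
Your proposal is correct and follows essentially the same route as the paper: invertibility of the $\eta$-arrow via the localization triangle and the projection formula, reduction to a single morphism $p:(Y';s')\to(Y;s)$ via connectedness of $\Emb_{\Ucal}(S)$, and a diagram chase whose only non-formal input is the compatibility of $u_r$ with inverse images (what you call ($u$ITS-1) over $\Ucal$; the paper refers to it as ($u$ITS-2) in the proof, an internal numbering slip). Your explicit mention of the transitions $\alpha_p$ and $\phi_p$ and of the section axiom for $\unit$ is exactly what the paper hides under ``by construction''.
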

\begin{proof}
	The fact that the two unit arrows $\eta: s^*(s_* A \otimes \unit_Y) \rightarrow s^*(s_* A \otimes s_* s^* \unit_Y)$ and $\eta: s^* (\unit_Y \otimes s_* B) \rightarrow s^*(s_* s^* \unit_Y \otimes s_* B)$ are both invertible can be checked using the associated localization triangles; we omit the details. Thus the natural transformations \eqref{u_S^s} are indeed well-defined and invertible.
	
	We are left to showing the independence statement. We only prove the result for the left-most composite; the other case is analogous. Using the connectedness of the category $\Emb_{\Ucal}(S)$ via direct products (\Cref{lem:Emb}), we are reduced to proving the following claim: Given a morphism $p: (Y';s') \rightarrow (Y;s)$ in $\Emb_{\Ucal}(S)$, the diagram of functors $\Hbb(S) \rightarrow \Hbb(S)$
	\begin{equation*}
		\begin{tikzcd}
			A \otimes_s \unit_S^{(s)} \arrow{drr}{u_r^{(s)}} \arrow{d}{\alpha_p} \\
			A \otimes_s \unit_S^{(s')} \arrow{d}{\phi_p} && A \\
			A \otimes_{s'} \unit_S^{(s')} \arrow{urr}{u_r^{(s')}}
		\end{tikzcd}
	\end{equation*}
	is commutative. Unwinding the various definitions, we obtain the diagram
	\begin{equation*}
		\begin{tikzcd}[font=\small]
			s^* (a_* A \otimes s_* {s'}^* q^* \unit_Y) \arrow{d}{\unit^*} \arrow[equal]{r} & s^* (s_* A \otimes s_* s^* \unit_Y) & s^* (s_* A \otimes \unit_Y) \arrow{l}{\eta} \arrow{r}{u_r} & s^* s_* A \arrow{dd}{\epsilon} \\
			s^* (s_* A \otimes s_* {s'}^* \unit_{Y'}) \arrow[equal]{dd} \\
			&&& A \\
			{s'}^* q^* (q_* s'_* A \otimes q_* s'_* {s'}^* \unit_{Y'}) \\
			{s'}^* (q^* q_* s'_* A \otimes q^* q_* s'_* {s'}^* \unit_{Y'}) \arrow{u}{m} \arrow{r}{\epsilon} & {s'}^* (s'_* A \otimes s'_* {s'}^* \unit_{Y'}) & {s'}^* (s'_* A \otimes \unit_{Y'}) \arrow{l}{\eta} \arrow{r}{u_r} & {s'}^* s'_* A \arrow{uu}{\epsilon}
		\end{tikzcd}
	\end{equation*}
	that we decompose as
	\begin{equation*}
		\begin{tikzcd}[font=\tiny]
			\bullet \arrow{dd}{\unit^*} \arrow[equal]{rr} \arrow[equal]{dr} && \bullet \arrow[equal]{d} & \bullet \arrow{l}{\eta} \arrow{rr}{u_r} \arrow[equal]{d} && \bullet \arrow{ddd}{\epsilon} \arrow[equal]{dddl} \\
			& {s'}^* q^* (q_* {s'}_* A \otimes s_* {s'}^* q^* \unit_Y) \arrow[equal]{dr} \arrow{dd}{\unit^*} & {s'}^* q^* (q_* {s'}_* A \otimes s_* s^* \unit_Y) \arrow[equal]{d} & {s'}^* q^* (q_* {s'}_* A \otimes \unit_Y) \arrow{l}{\eta} \arrow{d}{\eta} \arrow[bend left]{ddr}{u_r} \\
			\bullet \arrow[equal]{dd} && {s'}^* q^* (q_* {s'}_* A \otimes q_* {s'}_* {s'}^* q^* \unit_Y) \arrow{d}{\unit^*} & {s'}^* q^* (q_* {s'}_* A \otimes q_* q^* \unit_Y) \arrow{d}{\bar{m}} \arrow{ddl}{\unit^*} \arrow{l}{\eta} \\
			& {s'}^* q^* (q_* {s'}_* A \otimes s_* {s'}^* \unit_{Y'}) \arrow[equal]{dl} \arrow[equal]{r} & {s'}^* q^* (q_* {s'}_* A \otimes q_* {s'}_* {s'}^* \unit_{Y'}) & {s'}^* q^* q_* ({s'}_* A \otimes q^* \unit_{Y'}) \arrow{d}{\unit^*} & {s'}^* q^* q_* {s'}_* A \arrow{dddr}{\epsilon} & \bullet \\
			\bullet && {s'}^* q^* (q_* {s'}_* A \otimes q_* \unit_{Y'}) \arrow{u}{\eta} \arrow{ll}{\eta} \arrow{r}{\bar{m}} & {s'}^* q^* q_* ({s'}_* A \otimes \unit_{Y'}) \arrow[bend right]{ur}{u_r} \arrow{dd}{\epsilon} \\
			&& {s'}^* (q^* q_* {s'}_* A \otimes q^* q_* \unit_{Y'}) \arrow{dll}{\eta} \arrow{dr}{\epsilon} \arrow{u}{m} \\
			\bullet \arrow{uu}{m} \arrow{rr}{\epsilon} && \bullet & \bullet \arrow{l}{\eta} \arrow{rr}{u_r} && \bullet \arrow{uuu}{\epsilon}
		\end{tikzcd}
	\end{equation*}
	Here, the central-right five-term piece is commutative as a consequence of axiom ($u$ITS-2) over $\Ucal$, while all the remaining pieces are commutative by naturality and by construction. This proves the claim.
\end{proof}

\begin{proof}[Proof of \Cref{prop:ext-unit}]
	Similarly to the previous two cases, we only prove that the natural isomorphisms \eqref{formula:ext-unit} define an internal unit constraint on $(\tilde{\otimes},\tilde{u})$; the other two statements are left to the interested reader.
	
	Let us check that the natural isomorphisms \eqref{formula:ext-unit} satisfy conditions ($u$ITS-1) and ($u$ITS-2). We start from condition ($u$ITS-1): given $S \in \Scal$, we have to show that the diagram in $\Hbb(S)$
	\begin{equation*}
		\begin{tikzcd}
			\tilde{\unit}_S \otimes \tilde{\unit}_S \arrow{rr}{\id} \arrow{dr}{\tilde{u}_r} && \tilde{\unit}_S \otimes \tilde{\unit}_S \arrow{dl}{\tilde{u}_l} \\
			& \tilde{\unit}_S
		\end{tikzcd}
	\end{equation*}
	is commutative. Unwinding the various definitions, this amount to showing that, for every $(Y;s) \in \Emb(S)$, the diagram
	\begin{equation*}
		\begin{tikzcd}[font=\small]
			\unit_S^{(s)} \otimes_s \unit_S^{(s)} \arrow[equal]{rr} \arrow{dr}{u^{(s)}_{r,S}} && \unit_S^{(s)} \otimes_s \unit_S^{(s)} \arrow{dl}{u^{(s)}_{l,S}} \\
			& \unit_S^{(s)}
		\end{tikzcd}
	\end{equation*}
	is commutative. But the latter coincides with the outer part of the diagram
	\begin{equation*}
		\begin{tikzcd}[font=\small]
			s^*(s_* s^* \unit_Y \otimes s_* s^* \unit_Y) \arrow[equal]{rrrr} &&&& s^*(s_* s^* \unit_Y \otimes s_* s^* \unit_Y) \\
			s^*(s_* s^* \unit_Y \otimes \unit_Y) \arrow{u}{\eta} \arrow{d}{u_r} & s^*(\unit_Y \otimes \unit_Y) \arrow{l}{\eta} \arrow[equal]{rr} \arrow{dr}{u_r} && s^*(\unit_Y \otimes \unit_Y) \arrow{r}{\eta} \arrow{dl}{u_l} & s^*(\unit_Y \otimes s_* s^* \unit_Y) \arrow{u}{\eta} \arrow{d}{u_l} \\
			s^* s_* s^* \unit_Y \arrow{rr}{\epsilon} && s^* \unit_Y && s^* s_* s^* \unit_Y \arrow{ll}{\epsilon}
		\end{tikzcd}
	\end{equation*}
	where the central triangular piece is commutative by axiom ($u$ITS-1) over $\Ucal$, while the other inner pieces are commutative by naturality.
	
	We now check condition ($u$ITS-2): given a morphism $f: T \rightarrow S$ in $\Scal$, we have to show that the diagram of functors $\Hbb(S) \rightarrow \Hbb(T)$
	\begin{equation*}
		\begin{tikzcd}
			f^* A \tilde{\otimes} f^* \tilde{\unit}_S \arrow{r}{\tilde{m}} \arrow{d}{\tilde{\unit}^*} & f^*(A \tilde{\otimes} \tilde{\unit}_S) \arrow{d}{\tilde{u}_r} \\
			f^* A \tilde{\otimes} \tilde{\unit}_T \arrow{r}{\tilde{u}_r} & f^* A
		\end{tikzcd}
		\qquad
		\begin{tikzcd}
			f^* \tilde{\unit}_S \tilde{\otimes} f^* B \arrow{r}{\tilde{m}} \arrow{d}{\tilde{\unit}^*} & f^*(\tilde{\unit}_S \tilde{\otimes} B) \arrow{d}{\tilde{u}_l} \\
			\tilde{\unit}_T \tilde{\otimes} f^* B \arrow{r}{\tilde{u}_l} & f^* B
		\end{tikzcd}
	\end{equation*}
	are commutative. We only check the commutativity of the left-most diagram; the other case is analogous. To this end, choose an embedding $(Y;s) \in \Emb_{\Ucal}(S)$ and then a factorization $(X;t,p) \in \Fact_{\Scal}(s \circ f)$; note that the object $X$ automatically belongs to $\Ucal$, and so we obtain an embedding $(X;t) \in \Emb_{\Ucal}(T)$. Then, by construction, it suffices to show that the diagram of functors $\Hbb(S) \rightarrow \Hbb(T)$
	\begin{equation*}
		\begin{tikzcd}[font=\small]
			f^* A \otimes_t f^* \unit_S^{(s)} \arrow{rr}{m_f^{(s,t,p)}} \arrow{d}{\unit_f^{*,(s,t,p)}} && f^*(A \otimes_s \unit_S^{(s)}) \arrow{d}{u_{r,S}^{(s)}} \\
			f^* A \otimes_t \unit_T^{(t)} \arrow{rr}{u_{r,T}^{(t)}} && f^* A
		\end{tikzcd}
	\end{equation*}
	is commutative. Unwinding the definitions, we obtain the outer part of the diagram
	
	\begin{equation*}
		\begin{tikzcd}[font=\tiny]
			t^* (t_* f^* A \otimes t_* f^* s^* \unit_Y) \arrow[equal]{dd} & t^* (p^* s_* A \otimes p^* s_* s^* \unit_Y) \arrow{l}{\sim} \arrow{rr}{m}  && t^* p^*(s_* A \otimes s_* s^* \unit_Y)  \arrow[equal]{r} & f^* s^* (s_* A \otimes s_* s^* \unit_Y) \\
			& t^* (p^* s_* A \otimes p^* \unit_Y) \arrow{u}{\eta} \arrow{rr}{m} \isoarrow{d} \arrow{ddr}{\unit^*} && t^* p^* (s_* A \otimes \unit_Y) \arrow[equal]{r} \arrow{dd}{u_r} \arrow{u}{\eta} & f^* s^* (s_* A \otimes \unit_Y) \arrow{dd}{u_r} \arrow{u}{\eta} \\
			t^* (t_* f^* A \otimes t_* t^* p^* \unit_Y) \arrow{dd}{\unit^*} & t^* (t_* f^* A \otimes p^* \unit_Y) \arrow{l}{\eta} \arrow{ddr}{\unit^*} \\
			&& t^* (p^* s_* A \otimes \unit_X) \isoarrow{d} \arrow{r}{u_r} & t^* p^* s_* A \isoarrow{d} \arrow[equal]{r} & f^* s^* s_* A \arrow{d}{\epsilon} \\
			t^* (t_* f^* A \otimes t_* t^* \unit_X) && t^* (t_* f^* A \otimes \unit_X) \arrow{ll}{\eta} \arrow{r}{u_r} & t^* t_* f^* A \arrow{r}{\epsilon} & f^* A
		\end{tikzcd}
	\end{equation*}
	
	where the central inner piece is commutative by axiom ($u$ITS-2) over $\Ucal$, while the other inner pieces are commutative by naturality and by construction.
\end{proof}

\subsection{Extended compatibility conditions}

Lastly, we check that the extension procedure for associativity, commutativity and unit constraints just described preserves the natural mutual compatibility condition between them. Given internal associativity, commutativity and unit constraints $a$, $c$ and $u$ on $(\tilde{\otimes},\tilde{m})$, we consider the compatibility conditions described in \cite[\S 6]{Ter23Fib}:
\begin{enumerate}
	\item The constraints $a$ and $c$ are compatible if they satisfy the following additional condition:
	\begin{enumerate}
		\item[($ac$ITS)] For every $S \in \Scal$, the diagram of functors $\Hbb(S) \times \Hbb(S) \times \Hbb(S) \rightarrow \Hbb(S)$
		\begin{equation*}
			\begin{tikzcd}
				(A \otimes B) \otimes C \arrow{r}{c} \arrow{d}{a} & (B \otimes A) \otimes C \arrow{r}{a} & B \otimes (A \otimes C) \arrow{d}{c} \\
				A \otimes (B \otimes C) \arrow{r}{c} & (B \otimes C) \otimes A \arrow{r}{a} & B \otimes (C \otimes A)
			\end{tikzcd}
		\end{equation*}
		is commutative.
	\end{enumerate}
    \item The constraints $a$ and $u$ are \textit{compatible} if they satisfy the following condition:
    \begin{enumerate}
    	\item[($au$ITS)] For every $S \in \Scal$, the three diagrams of functors $\Hbb(S) \times \Hbb(S) \rightarrow \Hbb(S)$
    	\begin{equation*}
    		\begin{tikzcd}
    			(\unit_S \otimes B) \otimes C \arrow{rr}{a} \arrow{dr}{u_l} && \unit_S \otimes (B \otimes C) \arrow{dl}{u_l} \\
    			& B \otimes C
    		\end{tikzcd}
    	    \qquad
    	    \begin{tikzcd}
    	    	(A \otimes \unit_S) \otimes C \arrow{rr}{a} \arrow{dr}{u_r} && A \otimes (\unit_S \otimes C) \arrow{dl}{u_l} \\
    	    	& A \otimes C
    	    \end{tikzcd}
    	\end{equation*}
        and
    	\begin{equation*}
    		\begin{tikzcd}
    			(A \otimes B) \otimes \unit_S \arrow{rr}{a} \arrow{dr}{u_r} && A \otimes (B \otimes \unit_S) \arrow{dl}{u_r} \\
    			& A \otimes B
    		\end{tikzcd}
    	\end{equation*}
    	are commutative.
    \end{enumerate}
    \item The constraints $c$ and $u$ are compatible if they satisfy the following condition:
    \begin{enumerate}
    	\item[($cu$ITS)] For every $S \in \Scal$, the diagram of functors $\Hbb(S) \rightarrow \Hbb(S)$
    	\begin{equation*}
    		\begin{tikzcd}
    			\unit_S \otimes B \arrow{rr}{c} \arrow{dr}{u_l} && B \otimes \unit_S \arrow{dl}{u_r} \\
    			& B
    		\end{tikzcd}
    	\end{equation*}
    	is commutative.
    \end{enumerate}
\end{enumerate}
As usual, note that each compatibility condition for $(\tilde{\otimes},\tilde{m})$ determines an analogous compatibility condition for $(\otimes,m)$ over $\Ucal$. Conversely:

\begin{lem}\label{lem:ac-comp_int_ext}
	Let $(\otimes,m)$ be a localic internal tensor structure on $\Hbb$ over $\Ucal$, and let $(\tilde{\otimes},\tilde{m})$ denote the internal tensor structure over $\Scal$  obtained from it via \Cref{thm:ext-otimes}. Then the following statements hold:
	\begin{enumerate}
		\item Suppose that we are given compatible internal associativity and commutativity constraints $a$ and $c$ on $(\otimes,m)$. Then the extended constraints $\tilde{a}$ and $\tilde{c}$ on $(\tilde{\otimes},\tilde{m})$ are compatible as well.
		\item Suppose that we are given compatible internal associativity and unit constraints $a$ and $u$ on $(\otimes,m)$. Then the extended constraints $\tilde{a}$ and $\tilde{u}$ on $(\tilde{\otimes},\tilde{m})$ are compatible as well.
		\item Suppose that we are given compatible internal commutativity and unit constraint $c$ and $u$ on $(\otimes,m)$. then the extended constraints $\tilde{c}$ and $\tilde{u}$ on $(\tilde{\otimes},\tilde{m})$ are compatible as well.
	\end{enumerate}
\end{lem}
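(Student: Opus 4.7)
The plan is to treat the three statements separately but uniformly, following the strategy already established in the proofs of \Cref{prop:asso_int-ext}, \Cref{prop:comm_int-ext} and \Cref{prop:ext-unit}. For each of them, I would fix $S \in \Scal$, choose a single embedding $(Y;s) \in \Emb_{\Ucal}(S)$, and use \Cref{lem:asso_int-ext}, \Cref{lem:comm_int-ext} and \Cref{lem:ext-unit} to identify $\tilde{a}_S$, $\tilde{c}_S$, $\tilde{u}_{r,S}$ and $\tilde{u}_{l,S}$, via the canonical identifications $A \tilde{\otimes} B = A \otimes_s B$ and $\tilde{\unit}_S = \unit_S^{(s)} = s^* \unit_Y$, with the explicit representatives $a^{(s)}_S$, $c^{(s)}_S$, $u^{(s)}_{r,S}$ and $u^{(s)}_{l,S}$. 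The latter are all built out of $s^*$, $s_*$, the unit and counit of the adjunction $(s^*,s_*)$, and the corresponding constraints $a$, $c$, $u_r$, $u_l$ living on $\Hbb(Y)$.

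For part (1), I would expand the ($ac$ITS) hexagon for $(\tilde{a},\tilde{c})$ evaluated at $(A,B,C) \in \Hbb(S)^3$ into a larger outer diagram in $\Hbb(S)$, substituting each extension step by its $\eta$-$\epsilon$ definition. The resulting diagram decomposes into a central hexagon, which is the image under $s^*$ of the ($ac$ITS) hexagon on $\Hbb(Y)$ applied at $(s_* A, s_* B, s_* C)$ and commutes by the compatibility hypothesis over $\Ucal$, and a collection of peripheral polygons, each commuting by naturality of $a$, $c$ or $m$ with respect to the unit and counit transformations inserted in the definitions of $a^{(s)}_S$ and $c^{(s)}_S$. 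Parts (2) and (3) follow the same pattern: the three diagrams of ($au$ITS) each reduce to the image under $s^*$ of the analogous diagram on $\Hbb(Y)$ with $\unit_Y$ sitting in the appropriate slot and the other entries taken to be the $s_*$-images of $A$, $B$ or $C$, while the single triangle of ($cu$ITS) reduces to the image of the ($cu$ITS) triangle on $\Hbb(Y)$ at $(\unit_Y, s_* B)$; in both cases the peripheral regions are handled by naturality of $c$, $u_r$, $u_l$ together with naturality of the monoidal functor structure on $s^*(s_* - \otimes s_* -)$.

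The main obstacle will again be purely combinatorial: the diagrams one has to draw contain several nested $\eta$-$\epsilon$ insertions, and I would need to check carefully that every peripheral region can be justified by naturality so that only the central polygon invokes the hypothesis on $\Hbb$ over $\Ucal$. No new conceptual input should be required beyond what was already employed in the proofs of \Cref{prop:asso_int-ext}--\Cref{prop:ext-unit}; this is presumably why the author is content to omit the full diagram chases here and leave them to the interested reader, as done in the analogous uniqueness and restriction statements in those propositions.
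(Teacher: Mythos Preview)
Your proposal is correct and follows essentially the same approach as the paper: for each part, the paper fixes $S \in \Scal$, chooses an embedding $(Y;s) \in \Emb_{\Ucal}(S)$, replaces the extended constraints by their explicit representatives $a_S^{(s)}$, $c_S^{(s)}$, $u_{r,S}^{(s)}$, $u_{l,S}^{(s)}$, and then decomposes the resulting diagram into a central piece which is the image under $s^*$ of the relevant compatibility diagram over $\Ucal$ applied to the $s_*$-images, together with peripheral pieces commuting by naturality and construction. The paper does spell out the diagram chases explicitly (treating only one of the three ($au$ITS) diagrams in part (2), as you anticipated), but the structure of the argument is exactly as you describe.
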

\begin{proof}
	\begin{enumerate}
		\item Let us check that the constraints $\tilde{a}$ and $\tilde{c}$ in the statement satisfy condition ($ac$ITS): given $S \in \Scal$, we have to show that the diagram of functors $\Hbb(S) \times \Hbb(S) \times \Hbb(S) \rightarrow \Hbb(S)$
		\begin{equation*}
			\begin{tikzcd}
				(A \tilde{\otimes} B) \tilde{\otimes} C \arrow{r}{\tilde{c}} \arrow{d}{\tilde{a}} & (B \tilde{\otimes} A) \tilde{\otimes} C \arrow{r}{\tilde{a}} & B \tilde{\otimes} (A \tilde{\otimes} C) \arrow{d}{\tilde{c}} \\
				A \tilde{\otimes} (B \tilde{\otimes} C) \arrow{r}{\tilde{a}} & (B \tilde{\otimes} C) \tilde{\otimes} A \arrow{r}{\tilde{a}} & B \tilde{\otimes} (C \tilde{\otimes} A)
			\end{tikzcd}
		\end{equation*}
		is commutative. To this end, choose an embedding $(Y;s) \in \Emb_{\Ucal}(S)$. Then, by construction, it suffices to show that the diagram
		\begin{equation*}
			\begin{tikzcd}[font=\small]
				(A \otimes_s B) \otimes_s C \arrow{r}{c_S^{(s)}} \arrow{d}{a_S^{(s)}} & (B \otimes_s A) \otimes_S C \arrow{r}{a_S^{(s)}} & B \otimes_s (A \otimes_s C) \arrow{d}{c_S^{(s)}} \\
				A \otimes_s (B \otimes_s C) \arrow{r}{c_S^{(s)}} & (B \otimes_s C) \otimes_s A \arrow{r}{a_S^{(s)}} & B \otimes_s (C \otimes_s A)
			\end{tikzcd}
		\end{equation*}
		is commutative. Unwinding the definitions, we can write the latter more explicitly as the outer part of the diagram
		\begin{equation*}
			\begin{tikzcd}[font=\tiny]
				s^* (s_* s^* (s_* A \otimes s_* B) \otimes s_* C) \arrow{r}{c} & s^* (s_* s^* (s_* B \otimes s_* A) \otimes s_* C)  & s^* ((s_* B \otimes s_* A) \otimes s_* C) \arrow{l}{\eta} \arrow{r}{a} & s^* (s_* B \otimes (s_* A \otimes s_* C)) \arrow{d}{\eta} \arrow[bend right = 75]{ddd}{c} \\
				s^* ((s_* A \otimes s_* B) \otimes s_* C) \arrow{u}{\eta} \arrow{urr}{c} \arrow{d}{a} &&& s^* (s_* B \otimes s_* s^* (s_* A \otimes s_* C)) \arrow{d}{c} \\
				s^* (s_* A \otimes (s_* B \otimes s_* C)) \arrow{d}{\eta} \arrow{drr}{c} &&& s^* (s_* B \otimes s_* s^* (s_* C \otimes s_* A)) \\
				s^* (s_* A \otimes s_* s^* (s_* B \otimes s_* C)) \arrow{r}{c} & s^* (s_* s^* (s_* B \otimes s_* C) \otimes s_* A) & s^* ((s_* B \otimes s_* C) \otimes s_* A) \arrow{l}{\eta} \arrow{r}{a} & s^* (s_* B \otimes (s_* C \otimes s_* A)) \arrow{u}{\eta}
			\end{tikzcd}
		\end{equation*}
		Here, the central piece is commutative by axiom ($ac$ITS) over $\Ucal$ while the remaining pieces are commutative by naturality.
		\item Let us check that the definitions satisfy condition ($au$ITS): given $S \in \Scal$, we have to show that the three diagrams of functors $\Hbb(S) \times \Hbb(S) \rightarrow \Hbb(S)$
		\begin{equation*}
			\begin{tikzcd}
				(\tilde{\unit}_S \tilde{\otimes} B) \tilde{\otimes} C \arrow{dr}{\tilde{u}_l} \arrow{rr}{\tilde{a}} && \tilde{\unit}_S \tilde{\otimes} (B \tilde{\otimes} C) \arrow{dl}{\tilde{u}_l} \\
				& B \tilde{\otimes} C,
			\end{tikzcd}
		    \qquad 
		    \begin{tikzcd}
		    	(A \tilde{\otimes} \tilde{\unit}_S) \tilde{\otimes} C \arrow{dr}{\tilde{u}_r} \arrow{rr}{\tilde{a}} && A \tilde{\otimes} (\tilde{\unit}_S \tilde{\otimes} C) \arrow{dl}{\tilde{u}_l} \\
		    	& A \tilde{\otimes} C
		    \end{tikzcd}
		\end{equation*}
		and
		\begin{equation*}
			\begin{tikzcd}
				(A \tilde{\otimes} B) \tilde{\otimes} \tilde{\unit}_S \arrow{dr}{\tilde{u}_r} \arrow{rr}{\tilde{a}} && A \tilde{\otimes} (B \tilde{\otimes} \tilde{\unit}_S) \arrow{dl}{\tilde{u}_r} \\
				& A \tilde{\otimes} B
			\end{tikzcd}	
		\end{equation*}
		are commutative. We only treat the case of the first diagram; the other two diagrams can be studied in a similar way. As usual, choose an embedding $(Y;s) \in \Emb_{\Ucal}(S)$. Then, by construction, it suffices to show that the diagram of functors $\Hbb(S) \times \Hbb(S) \rightarrow \Hbb(S)$
		\begin{equation*}
			\begin{tikzcd}[font=\small]
				(\unit_S^{(s)} \otimes_s B) \otimes_s C \arrow{dr}{u_{l,S}^{(s)}} \arrow{rr}{a_S^{(s)}} && \unit_S^{(s)} \otimes_s(B \otimes_s C) \arrow{dl}{u_{l,S}^{(s)}} \\
				& B \otimes_s C
			\end{tikzcd}
		\end{equation*}
		is commutative. Unwinding the various definitions, we obtain the more explicit diagram
		\begin{equation*}
			\begin{tikzcd}[font=\small]
				s^* ((s_* s^* \unit_Y \otimes s_* B) \otimes s_* C) \arrow{rr}{a} \arrow{d}{\eta} && s^* (s_* s^* \unit_Y \otimes (s_* B \otimes s_* C)) \arrow{d}{\eta} \\
				s^* (s_* s^* (s_* s^* \unit_Y \otimes s_* B) \otimes s_* C) && s^* (s_* s^* \unit_Y \otimes s_* s^* (s_* B \otimes s_* C)) \\
				s^* (s_* s^* (\unit_Y \otimes s_* B) \otimes s_* C) \arrow{u}{\eta} \arrow{d}{u_l} && s^* (\unit_Y \otimes s_* s^* (s_* B \otimes s_* C)) \arrow{u}{\eta} \arrow{d}{u_l} \\
				s^* (s_* s^* s_* B \otimes s_* C) \arrow{dr}{\epsilon} && s^* s_* s^* (s_* B \otimes s^* C) \arrow{dl}{\epsilon} \\
				& s^* (s_* B \otimes s_* C)
			\end{tikzcd}
		\end{equation*}
		that we decompose as
		\begin{equation*}
			\begin{tikzcd}
				\bullet \arrow{rrrr}{a} \arrow{d}{\eta} &&&& \bullet \arrow{d}{\eta} \\
				\bullet &&&& \bullet \\
				\bullet \arrow{u}{\eta} \arrow{d}{u_l} & s^* ((\unit_Y \otimes s_* B) \otimes s_* C) \arrow{l}{\eta} \arrow{rr}{a} \arrow{uul}{\eta} \arrow{ddr}{u_l} && s^* (\unit_Y \otimes (s_* B \otimes s_* C)) \arrow{r}{\eta} \arrow{uur}{\eta} \arrow{ddl}{u_l} & \bullet \arrow{u}{\eta} \arrow{d}{u_l} \\
				\bullet \arrow{drr}{\epsilon} &&&& \bullet \arrow{dll}{\epsilon} \\
				&& \bullet
			\end{tikzcd}
		\end{equation*}
		Here, the central triangle is commutative by axiom ($au$ITS) over $\Ucal$, while all the remaining pieces are commutative by naturality and by construction. This proves the claim. 
		\item Let us check that the definitions satisfy condition ($cu$ITS): given $S \in \Scal$, we have to show that the diagram of functors $\Hbb(S) \rightarrow \Hbb(S)$
		\begin{equation*}
			\begin{tikzcd}
				\tilde{\unit}_S \tilde{\otimes} B \arrow{rr}{\tilde{c}} \arrow{dr}{\tilde{u}_l} && B \tilde{\otimes} \tilde{\unit}_S \arrow{dl}{\tilde{u}_r} \\
				& B
			\end{tikzcd}
		\end{equation*}
		is commutative. To this end, choose an embedding $(Y;s) \in \Emb_{\Ucal}(S)$. Then, by construction, it suffices to show that the diagram of functors
		\begin{equation*}
			\begin{tikzcd}[font=\small]
				\unit_S^{(s)} \otimes_s B \arrow{dr}{u_{l,S}^{(s)}} \arrow{rr}{c_S^{(s)}} && B \otimes_s \unit_S^{(s)} \arrow{dl}{u_{r,S}^{(s)}} \\
				& B
			\end{tikzcd}
		\end{equation*}
		is commutative. Unwinding the various definitions, we can write it more explicitly as the outer part of the diagram
		\begin{equation*}
			\begin{tikzcd}[font=\small]
				s^* (s_* s^* \unit_Y \otimes s_* B) \arrow{rr}{c} && s^* (s_* B \otimes s_* s^* \unit_Y) \\
				s^* (\unit_Y \otimes s_* B) \arrow{rr}{c} \arrow{u}{\eta} \arrow{dr}{u_l} && s^* (s_* B \otimes \unit_Y) \arrow{u}{\eta} \arrow{dl}{u_l} \\
				&  s^* s_* B \arrow{d}{\epsilon} \\
				& B
			\end{tikzcd}
		\end{equation*}
		where the upper square is commutative by naturality while the lower triangle is commutative by axiom ($cu$ITS) over $\Ucal$. This proves the claim.
	\end{enumerate}
\end{proof}

\section{Extending monoidal morphisms}\label{sect:ext-rho}

Throughout this section, we fix two localic $\Scal$-fibered categories $\Hbb_1$ and $\Hbb_2$ endowed with localic internal tensor structures $(\otimes_1,m_1)$ and $(\otimes_2,m_2)$, respectively; we also fix a localic morphism $R: \Hbb_1 \rightarrow \Hbb_2$. We are interested in applying our extension method to the monoidality properties of $R$ with respect to the two given monoidal structures on $\Hbb_1$ and $\Hbb_2$; as in the previous section, we formulate our results in the language of internal tensor structures employed in \cite{Ter23Fib}. 

The first part of this section is devoted to proving the main extension result. In the second part, we show that the possible associativity, symmetry and unitarity properties of $R$ are all preserved along this extension.

As the reader can guess, one could also start from localic internal tensor structures on the underlying $\Ucal$-fibered categories of $\Hbb_1$ and $\Hbb_2$ and from a morphism of $\Ucal$-fibered category, and then extend all these structures at the same time together with the monoidality properties of the morphism. We prefer to work directly with the extended structures in order to simplify the notation; however, in the proof of the results below we often need to use the constructions of \Cref{sect:ext-mor} and \Cref{sect_ext-boxtimes} implicitly.

\subsection{Extending internal tensor structures on morphisms}

We are interested in studying \textit{internal tensor structures} $\rho$ on $R$ (with respect to $(\otimes_1,m_1)$ and $(\otimes_2,m_2)$): as in \cite[Defn.~8.1]{Ter23Fib}, by this we mean the datum of
\begin{itemize}
	\item for every $S \in \Scal$, a natural isomorphism of functors $\Hbb_1(S) \times \Hbb_1(S) \rightarrow \Hbb_2(S)$
	\begin{equation*}
		\rho = \rho_S: R_S(A) \otimes_2 R_S(B) \xrightarrow{\sim} R_S(A \otimes_1 B),
	\end{equation*}
	called the \textit{internal $R$-monoidality isomorphism} at $S$
\end{itemize}
satisfying the following condition:
\begin{enumerate}
	\item[(mor-ITS)] For every morphism $f: T \rightarrow S$ in $\Scal$, the natural diagram of functors $\Hbb_1(S) \times \Hbb_1(S) \rightarrow \Hbb_2(T)$
	\begin{equation*}
		\begin{tikzcd}
			f^* R_S(A) \otimes_2 f^* R_S(B) \arrow{r}{m_2} \arrow{d}{\theta} & f^* (R_S(A) \otimes_2 R_S(B)) \arrow{r}{\rho} & f^* R_S(A \otimes_1 B) \arrow{d}{\theta} \\
			R_T(f^* A) \otimes_2 R_T(f^* B) \arrow{r}{\rho} & R_T(f^* A \otimes_1 f^* B) \arrow{r}{m_1} & R_T(f^*(A \otimes_1 B))
		\end{tikzcd}
	\end{equation*}
	is commutative.
\end{enumerate}
Since $\Hbb_1$ and $\Hbb_2$ are triangulated $\Scal$-fibered categories and $R$ is a triangulated morphism, it is natural to only consider those internal tensor structures $\rho$ for which each natural isomorphism $\rho_S$ respects the triangulated structures: in this way, one obtains the notion of a \textit{triangulated internal tensor structure} on $R$ (see \cite[\S~9]{Ter23Fib}). No additional condition is required in the localic setting.

Note that any (triangulated) internal tensor structure on $R$ defines by restriction a (triangulated) internal tensor structure on the underlying morphism of $\Ucal$-fibered categories. In the reverse direction, we have the following result:

\begin{thm}\label{thm_ext-rho}
	Let $\rho$ be a triangulated internal tensor structure on the morphism of $\Ucal$-fibered categories $R: \Hbb_1 \rightarrow \Hbb_2$. Then there exists a unique triangulated internal tensor structure $\tilde{\rho}$ on the morphism of $\Scal$-fibered categories $R$ whose restriction to $\Ucal$ coincides with $\rho$.
\end{thm}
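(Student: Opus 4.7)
The plan is to mimic the three-step structure used for \Cref{thm_ext} and \Cref{thm:ext-otimes}. Fix $S \in \Scal$. For every embedding $(Y;s) \in \Emb_{\Ucal}(S)$, I would define a natural transformation of functors $\Hbb_1(S) \times \Hbb_1(S) \rightarrow \Hbb_2(S)$
\begin{equation*}
\rho_S^{(s)}: R_S^{(s)}(A) \otimes_{2,s} R_S^{(s)}(B) \rightarrow R_S^{(s)}(A \otimes_{1,s} B)
\end{equation*}
as the composite
\begin{equation*}
s^* R_Y(s_* A) \otimes_{2,s} s^* R_Y(s_* B) \xleftarrow{\sim} s^*(R_Y(s_* A) \otimes_2 R_Y(s_* B)) \xrightarrow{\rho_Y} s^* R_Y(s_* A \otimes_1 s_* B) \xrightarrow{\eta} s^* R_Y(s_* s^*(s_* A \otimes_1 s_* B)),
\end{equation*}
the unnamed leftward isomorphism being the inverse of the one built in \Cref{lem:ext-monoint} (specialised to the identity $\id_S$). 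The first task is to check that the final unit arrow $\eta$ becomes an isomorphism after applying $s^* R_Y$: by the usual localisation triangle argument, this reduces to the vanishing of $s^* R_Y \circ u_{\#}$ for $u: U \rightarrow Y$ the open complement of $s$, which holds since $R_Y \circ u_{\#} \simeq u_{\#} \circ R_U$ (as $R$ is localic) and $s^* \circ u_{\#} = 0$ (as $\Hbb_2$ is localic).

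Next I would show that $\rho_S^{(s)}$ is independent of the choice of $(Y;s) \in \Emb_{\Ucal}(S)$, in the sense that it is compatible with the transition isomorphisms $\lambda_p$ of \Cref{lem_emb} and $\phi_p$ of \Cref{lem:ext-otimes} attached to any morphism $p: (Y';s') \rightarrow (Y;s)$ in $\Emb_{\Ucal}(S)$. By the usual connectedness-via-product argument (\Cref{lem:Emb}), this reduces to the commutativity of a single large diagram in $\Hbb_2(S)$; the inner piece of that diagram is exactly axiom (mor-ITS) over $\Ucal$ applied to $p$, while all remaining pieces commute either by naturality or by construction. Taking the colimit over $\Gcal_{\Ucal}(S)$ in the standard way then produces the desired natural isomorphism $\tilde{\rho}_S: \tilde{R}_S(A) \tilde{\otimes}_2 \tilde{R}_S(B) \xrightarrow{\sim} \tilde{R}_S(A \tilde{\otimes}_1 B)$, canonically isomorphic to every $\rho_S^{(s)}$.

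The main obstacle will be verifying axiom (mor-ITS) for the collection $\{\tilde{\rho}_S\}_{S \in \Scal}$: given $f: T \rightarrow S$, one has to compare the compositions $\tilde{\theta}_f \circ \tilde{\rho}_S$ and $\tilde{\rho}_T \circ \tilde{m}_{1,f}$ with $\tilde{m}_{2,f}$ interposed appropriately. Choosing an embedding $(Y;s) \in \Emb_{\Ucal}(S)$ and a factorisation $(X;t,p) \in \Fact_{\Scal}(s \circ f)$ (so that $X \in \Ucal$ by \Cref{lem:Ucal}(2) and $(X;t) \in \Emb_{\Ucal}(T)$), the computation reduces, by the explicit formulae of \Cref{lem_prep-thm-mor-ext} and \Cref{lem:ext-monoint}, to the commutativity of a large diagram in $\Hbb_2(T)$ whose central cell is axiom (mor-ITS) for $\rho$ over $\Ucal$ applied to $p$, with the outer cells commuting by naturality, by the localic projection formulae for $(\otimes_2,m_2)$, and by repeated applications of \cite[Lemma~1.3]{Ter23Fib} and \cite[Lemma~1.8]{Ter23Fib}; this is exactly the combinatorial core of the proof and, although no conceptually new ingredient is needed beyond those already used for \Cref{thm_ext} and \Cref{thm:ext-otimes}, the bookkeeping is heavy.

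Finally, for uniqueness, I would argue as in Step 3 of the proof of \Cref{thm_ext}: any second triangulated internal tensor structure $\rho'$ on $R$ extending $\rho$ must agree with $\tilde{\rho}$ when evaluated along any commutative square built from a closed immersion $s$ and a factorisation $(X;t,p)$, since (mor-ITS) for $\rho'$ combined with the already-established compatibility of $\tilde{\theta}$ and $\tilde{m}$ forces the value of $\rho'_S$ on every $A, B \in \Hbb_1(S)$ to coincide with the explicit formula defining $\tilde{\rho}_S^{(s)}$. The assertion that $\tilde{\rho}$ is triangulated is immediate from the construction, since every ingredient (the functors $R_Y$, $s^*$, $s_*$, $- \otimes_2 -$, $\rho_Y$) respects the triangulated structure by hypothesis.
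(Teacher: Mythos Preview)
Your proposal is correct and follows essentially the same route as the paper: define $\rho_S^{(s)}$ via an embedding, prove independence of the choice using (mor-ITS) over $\Ucal$ along morphisms in $\Emb_{\Ucal}(S)$, then verify (mor-ITS) over $\Scal$ via a factorisation $(X;t,p)$. One small slip: the leftward isomorphism in your definition of $\rho_S^{(s)}$ is simply the internal monoidality isomorphism $m_{2,s}$ for the closed immersion $s$ (available since $(\otimes_2,m_2)$ is already given over $\Scal$), not a specialisation of \Cref{lem:ext-monoint} to $\id_S$---but the intended map is the right one, and the paper records exactly this arrow as $m_2$ (with your final unit $\eta$ written as $\bar{m}_1$).
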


As usual, we isolate the main auxiliary construction needed for the proof in the following result:

\begin{lem}\label{lem_rho_sm}
	Fix $S \in \Scal$. For every embedding $(Y;s) \in \Emb_{\Ucal}(S)$, define a natural isomorphism of functors $\Hbb_1(S) \times \Hbb_1(S) \rightarrow \Hbb_2(S)$
	\begin{equation*}
		\rho_S^{(s)} : R_S^{(s)}(A) \otimes_{2,s} R_S^{(s)}(B) \xrightarrow{\sim} R_S^{(s)}(A \otimes_{1,s} B)
	\end{equation*}
	by taking the composite
	\begin{equation*}
		\begin{tikzcd}[font=\small]
			R_S^{(s)}(A) \otimes_{2,s} R_S^{(s)}(B) \arrow[equal]{d} &&& R_S^{(s)}(A \otimes_{1,s} B). \arrow[equal]{d} \\
			s^* R_Y(s_* A) \otimes_{2,s} s^* R_Y(s_* B) \arrow{r}{m_2} & s^*(R_Y(s_* A) \otimes_{2,s} R_Y(s_* B)) \arrow{r}{\rho} & s^* R_Y (s_* A \otimes_{1,s} s_* B) \arrow{r}{\bar{m}_1} & s^* R_Y(s_* (A \otimes_{1,s} B)) 
		\end{tikzcd}
	\end{equation*}
	Then the induced natural isomorphism of functors $\Hbb_1(S) \times \Hbb_1(S) \rightarrow \Hbb_2(S)$
	\begin{equation}\label{tilde:rho}
		\tilde{\rho} = \tilde{\rho}_S: R_S(A) \otimes_{2,s} R_S(B) = R_S^{(s)}(A) \otimes_{2,s} R_S^{(s)}(B) \xrightarrow{\rho_S^{(s)}} R_S^{(s)}(A \otimes_{1,s} B) = R_S(A \otimes_{1,s} B)
	\end{equation}
	is independent of the choice of $(Y;s) \in \Emb_{\Ucal}(S)$.
\end{lem}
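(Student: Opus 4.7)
The plan is to follow exactly the same template that was used in the proofs of \Cref{lem_prep-thm-mor-ext}, \Cref{lem:asso_int-ext}, \Cref{lem:comm_int-ext} and \Cref{lem:ext-unit}: namely, reduce the independence statement to a compatibility check along a single morphism in $\Emb_{\Ucal}(S)$, and then settle that check by a diagram chase invoking axiom (mor-ITS) for $\rho$ over $\Ucal$ at the heart of the argument, with naturality filling in the peripheral pieces.

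More precisely, using the connectedness of $\Emb_{\Ucal}(S)$ via direct products (\Cref{lem:Emb}), we are reduced to the following claim: for every morphism $p: (Y';s') \rightarrow (Y;s)$ in $\Emb_{\Ucal}(S)$, the diagram of functors $\Hbb_1(S) \times \Hbb_1(S) \rightarrow \Hbb_2(S)$
\begin{equation*}
	\begin{tikzcd}
		R_S^{(s)}(A) \otimes_{2,s} R_S^{(s)}(B) \arrow{r}{\rho_S^{(s)}} \arrow{d}{\lambda_p \otimes \lambda_p} & R_S^{(s)}(A \otimes_{1,s} B) \arrow{d}{\lambda_p} \\
		R_S^{(s')}(A) \otimes_{2,s} R_S^{(s')}(B) \arrow{d}{\phi_p} & R_S^{(s')}(A \otimes_{1,s} B) \arrow{d}{\phi_p} \\
		R_S^{(s')}(A) \otimes_{2,s'} R_S^{(s')}(B) \arrow{r}{\rho_S^{(s')}} & R_S^{(s')}(A \otimes_{1,s'} B)
	\end{tikzcd}
\end{equation*}
commutes, where $\lambda_p$ is the transition morphism from \Cref{lem_emb}(1) (applied to both $\Hbb_1$ and $\Hbb_2$ implicitly, via the extension mechanism) and $\phi_p$ is the transition morphism from \Cref{lem:ext-otimes}(1).

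To verify this claim, I would expand both $\rho_S^{(s)}$ and $\rho_S^{(s')}$ according to their definitions, and expand $\lambda_p$ and $\phi_p$ in terms of the relevant $\theta$-transition isomorphisms, $m_1$- and $m_2$-monoidality isomorphisms, and the unit/co-unit arrows of the adjunctions $(p^*, p_*)$ and $(s_*, s^*)$ (resp.\ $(s'_*, {s'}^*)$). This produces a large diagram whose central piece is essentially an application of axiom (mor-ITS) for $\rho$ to the smooth morphism $p$ (or, more precisely, to the diagram that witnesses how $R_{Y'}$ intertwines $p^* \circ R_Y$); the surrounding pieces then reduce to naturality of $\theta$, naturality of $m_1$ and $m_2$, axioms ($m$ITS) in $\Hbb_1$ and $\Hbb_2$ over $\Ucal$, and axiom (mor-$\Ucal$-fib) for $R$, exactly as in the proofs of \Cref{lem_prep-thm-mor-ext} and \Cref{lem:asso_int-ext}.

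The main obstacle is purely bookkeeping: unlike in those earlier lemmas where only one of the structures $(\theta, m, a/c/u)$ interacted nontrivially with the transition maps, here three independent structures $(\theta_p, m_1, m_2)$ must all be shuffled past each other, and the diagram has to be decomposed so that each inner tile is either a naturality square, an instance of (mor-ITS) over $\Ucal$, or an instance of ($m$ITS). I would handle this by first factoring the transition map $\lambda_p$ on the right-hand column through the intermediate functor $s^* R_Y(p^* p_* s'_* -)$ and the transition map $\phi_p$ on the left-hand column through the intermediate tensor product $-\otimes_{2,s}-$ applied after the units, so that the central tile becomes exactly the square expressing the compatibility of $\rho_{Y'}$ with $p^*$ (equivalently, axiom (mor-ITS) for $\rho$ along $p$ pre-composed with $s'_*$). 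The invertibility of $\rho_S^{(s)}$ itself is automatic from the definition, since it is composed of the invertible natural isomorphisms $m_2$, $\rho$ over $\Ucal$, and $\bar m_1$. Independence then follows, and the induced isomorphism \eqref{tilde:rho} is well-defined.
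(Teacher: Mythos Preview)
Your proposal is correct and follows essentially the same route as the paper: reduce via connectedness of $\Emb_{\Ucal}(S)$ to a single morphism $p:(Y';s')\to(Y;s)$, then expand $\rho_S^{(s)}$, $\rho_S^{(s')}$, $\lambda_p$, $\phi_p$ and decompose the resulting diagram so that the central tile is axiom (mor-ITS) for $\rho$ along $p$, with all remaining tiles handled by naturality and construction. Two minor remarks: the paper presents the transition as a single square (labelling the combined vertical maps simply $\phi_p$) rather than your two-step $\lambda_p$-then-$\phi_p$ ladder, and in its decomposition it explicitly invokes the axiom ($\Scal^{op}$-fib) governing $\bar m_1$ for one five-term sub-piece, which you absorb implicitly into ``naturality and construction''; neither difference is substantive.
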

\begin{proof}
	In view of the connectedness of the category $\Emb_{\Ucal}(S)$ via direct products (\Cref{lem:Emb}), it suffices to prove the following claim: Given a morphism $p: (Y';s') \rightarrow (Y;s)$ in $\Emb_{\Ucal}(S)$, the diagram of functors $\Hbb_1(S) \times \Hbb_1(S) \rightarrow \Hbb_2(S)$
	\begin{equation*}
		\begin{tikzcd}
			R_S^{(s)}(A) \otimes_{2,s} R_S^{(s)}(B) \arrow{r}{\rho_S^{(s)}} \arrow{d}{\phi_p} & R_S^{(s)}(A \otimes_{1,s} B) \arrow{d}{\phi_p} \\
			R_S^{(s')}(A) \otimes_{2,s'} R_S^{(s')}(B) \arrow{r}{\rho_S^{(s')}} & R_S^{(s')}(A \otimes_{1,s'} B) 
		\end{tikzcd}
	\end{equation*}
	is commutative. Unwinding the various definition, we obtain the more explicit diagram
	\begin{equation*}
		\begin{tikzcd}[font=\small]
			s^*(R_Y(s_* A) \otimes_2 R_Y(s_* B)) \arrow{r}{\rho} & s^* R_Y(s_* A \otimes_1 s_* B) \arrow{d}{\bar{m}_1} \\
			s^* R_Y(s_* A) \otimes_2 s^* R_Y(s_* B) \arrow[equal]{d} \arrow{u}{m_2} & s^* R_Y(s_*(A \otimes_1 B)) \arrow[equal]{d} \\
			{s'}^* p^* R_Y(p_* s'_* A) \otimes_2 {s'}^* p^* R_Y(p_* s'_* A) \arrow{d}{\theta} & {s'}^* p^* R_Y(p_* s'_*(A \otimes_1 B)) \arrow{d}{\theta} \\
			{s'}^* R_{Y'}(p^* p_* s'_* A) \otimes_2 {s'}^* R_{Y'}(p^* p_* s'_* B) \arrow{d}{\epsilon}  & {s'}^* R_{Y'}(p^* p_* s'_* (A \otimes_1 B)) \arrow{d}{\epsilon} \\
			{s'}^* R_{Y'}(s'_* A) \otimes_2 {s'}^* R_{Y'}(s'_* B) \arrow{d}{m_2} & {s'}^* R_{Y'}(s'_*(A \otimes_1 B)) \\
			{s'}^*(R_{Y'}(s'_* A) \otimes_2 R_{Y'}(s'_* B)) \arrow{r}{\rho} & {s'}^* R_{Y'}(s'_* A \otimes_1 s'_* B). \arrow{u}{\bar{m}_1}
		\end{tikzcd}
	\end{equation*}
    that we decompose as
    \begin{equation*}
    	\begin{tikzcd}[font=\tiny]
    		\bullet \arrow{rrrr}{\rho} \arrow[equal]{dr} &&&& \bullet \arrow{d}{\bar{m}_1} \arrow[equal]{dll} \\
    		\bullet \arrow[equal]{d} \arrow{u}{m_2} & {s'}^* p^* (R_Y(p_* {s'}_* A) \otimes_2 R_Y(p_* {s'}_* B)) \arrow{r}{\rho} & {s'}^* p^* R_Y(p_* {s'}_* A \otimes_1 p_* {s'}_* B) \arrow{d}{\theta} \arrow{dr}{\bar{m}_1} && \bullet \arrow[equal]{d} \\
    		\bullet \arrow{d}{\theta} \arrow{r}{m_2} & {s'}^* (p^* R_Y(p_* {s'}_* A) \otimes_2 p^* R_Y(p_* {s'}_* B)) \arrow{u}{m_2} \arrow{dd}{\theta} & {s'}^* R_{Y'}(p^* (p_* {s'}_* A \otimes_1 p_* {s'}_* B)) \arrow{dr}{\bar{m}_1} & {s'}^* p^* R_Y(p_* ({s'}_* A \otimes {s'}_* B)) \arrow{r}{\bar{m}_1} \arrow{d}{\theta} & \bullet \arrow{d}{\theta} \\
    		\bullet \arrow{d}{\epsilon} \arrow{dr}{m_2} &&& {s'}^* R_{Y'}(p^* p_* ({s'}_* A \otimes_1 {s'}_* B)) \arrow{ddr}{\epsilon} \arrow{r}{\bar{m}_1} & \bullet \arrow{d}{\epsilon} \\
    		\bullet \arrow{d}{m_2} & {s'}^* (R_{Y'}(p^* p_* {s'}_* A) \otimes_2 R_{Y'}(p^* p_* {s'}_* B)) \arrow{r}{\rho} \arrow{dl}{\epsilon} & {s'}^* R_{Y'}(p^* p_* {s'}_* A \otimes_1 p^* p_* {s'}_* B) \arrow{uu}{m_1} \arrow{drr}{\epsilon} && \bullet \\
    		\bullet \arrow{rrrr}{\rho} &&&& \bullet \arrow{u}{\bar{m}_1}
    	\end{tikzcd}
    \end{equation*}
    Here, the central rectangle is commutative by axiom ($m$ITS) over $\Ucal$, the upper-right five-term piece is commutative by axiom ($\Scal^{op}$-fib), and all the remaining pieces are commutative by naturality and by construction. This proves the claim.
\end{proof}

\begin{proof}[Proof of \Cref{thm_ext-rho}]
	We only check that the natural isomorphisms \eqref{tilde:rho} define an internal tensor structure on $R$; the rest is left to the interested reader.
	
	Let us check that the natural isomorphisms \eqref{tilde:rho} satisfy condition (mor-ITS): given a morphism $f: T \rightarrow S$ in $\Scal$, we have to show that the diagram of functors $\Hbb_1(S) \times \Hbb_1(S) \rightarrow \Hbb_2(T)$
	\begin{equation*}
		\begin{tikzcd}
			f^* R_S(A) \otimes_2 f^* R_S(B) \arrow{r}{m_2} \arrow{d}{\theta} & f^*(R_S(A) \otimes_2 R_S(B))  \arrow{d}{\tilde{\rho}} \\
			f^* R_S(A \otimes_1 B)  \arrow{d}{\tilde{\rho}} & R_T(f^* A) \otimes_2 R_T(f^* B) \arrow{d}{\theta} \\
			R_T(f^* A \otimes_1 f^* B) \arrow{r}{m_1} & R_T(f^* (A \otimes_1 B))
		\end{tikzcd}
	\end{equation*}
	commutes. To this end, choose an embedding $(Y;s) \in \Emb_{\Ucal}(S)$ and then a factorization $(X;t,p) \in \Fact_{\Scal}(s \circ f)$; note that the object $X$ automatically belongs to $\Ucal$, and so we obtain an embedding $(X;t) \in \Emb_{\Ucal}(T)$. Then, by construction, it suffices to show that the diagram
	\begin{equation*}
		\begin{tikzcd}[font=\small]
			f^* R_S^{(s)}(A) \otimes_2 f^* R_S^{(s)}(B)  \arrow{rr}{m_{2,f}^{(s,t,p)}} \arrow{d}{\theta_f^{(s,t,p)}} && f^*(R_S^{(s)}(A) \otimes_2 R_S^{(s)}(B)) \arrow{d}{\rho_S^{(s)}} \\
			R_T^{(t)}(f^* A) \otimes_2 R_T(f^* B) \arrow{d}{\rho_T^{(t)}} && f^* R_S^{(s)}(A \otimes_1 B) \arrow{d}{\theta_f^{(s,t,p)}} \\
			R_T^{(t)}(f^* A \otimes_1 f^* B) \arrow{rr}{m_{1,f}^{(s,t,p)}} && R_T^{(t)}(f^* (A \otimes_1 B))
		\end{tikzcd}
	\end{equation*}
	is commutative. Unwinding the various definitions, we obtain the outer part of the diagram
	\begin{equation*}
		\begin{tikzcd}[font=\small]
			f^* s^* R_Y(s_* A) \otimes_2 f^* s^* R_Y(s_* B) \arrow[equal]{d}  \arrow{r}{m_2} & f^* (s^* R_Y(s_* A) \otimes_2 s^* R_Y(s_* B)) \arrow{d}{m_2} \\
			t^* p^* R_Y(s_* A) \otimes_2 t^* p^* R_Y(s_* B) \arrow{d}{\theta} & f^* s^* (R_Y(s_* A) \otimes_2 R_Y(s_* B))  \arrow{d}{\rho} \\
			t^* R_X(p^* s_* A) \otimes_2 t^* R_X(p^* s_* B) \arrow{d}{\eta} & f^* s^* R_Y(s_* A \otimes_1 s_* B) \arrow{d}{\bar{m}_1} \\
			t^* R_X(t_* t^* p^* s_* A) \otimes_2 t^* R_X(t_* t^* p^* s_* B) \arrow[equal]{d} & f^* s^* R_Y(s_*(A \otimes_1 B)) \arrow[equal]{d} \\
			t^* R_X(t_* f^* s^* s_* A) \otimes_2 t^* R_X(t_* f^* s^* s_* B) \arrow{d}{\epsilon} & t^* p^* R_Y(s_*(A \otimes_1 B)) \arrow{d}{\theta} \\
			t^* R_X(t_* f^* A) \otimes_2 t^* R_X(t_* f^* B) \arrow{d}{m_2} & t^*  R_Y(p^* s_*(A \otimes_1 B)) \arrow{d}{\eta} \\
			t^* (R_X(t_* f^* A) \otimes_2 R_X(t_* f^* B)) \arrow{d}{\rho} & t^*  R_Y(t_* t^* p^* s_*(A \otimes_1 B)) \arrow[equal]{d} \\
			t^* R_X(t_* f^* A \otimes_1 t_* f^* B) \arrow{d}{\bar{m}_1} & t^*  R_Y(t_* f^* s^* s_*(A \otimes_1 B)) \arrow{d}{\epsilon} \\
			t^* R_X(t_* (f^* A \otimes_1 f^* B)) \arrow{r}{m_1} & t^*  R_Y(t_* f^* (A \otimes_1 B))
		\end{tikzcd}
	\end{equation*}
    that we decompose as
    \begin{equation*}
    	\begin{tikzcd}[font=\tiny]
    		\bullet \arrow[equal]{d} \arrow{rrrr}{m_2} &&&& \bullet \arrow{d}{m_2} \\
    		\bullet \arrow{d}{\theta} \arrow{r}{m_2} & t^* (p^* R_Y(s_* A) \otimes_2 p^* R_Y(s_* B)) \arrow{rr}{m_2} \arrow{d}{\theta} && t^* p^* (R_Y(s_* A) \otimes_2 R_Y(s_* B)) \arrow[equal]{r} \arrow{dd}{\rho} & \bullet  \arrow{d}{\rho} \\
    		\bullet \arrow{d}{\eta} \arrow{r}{m_2} & t^* (R_X(p^* s_* A) \otimes_2 R_X(p^* s_* B)) \arrow{dr}{\rho} \arrow{d}{\eta} &&& \bullet \arrow{d}{\bar{m}_1} \\
    		\bullet \arrow[equal]{d} \arrow{r}{m_2} & t^* (R_X(t_* t^* p^* s_* A) \otimes_2 R_X(t_* t^* p^* s_* B)) \arrow{dr}{\rho} \arrow[equal]{dd} & t^* R_X(p^* s_* A \otimes_2 p^* s_* B) \arrow{d}{\eta} \arrow{dr}{m_1} & t^* p^* R_Y(s_* A \otimes_1 s_* B) \arrow{d}{\theta} \arrow[equal]{ur} \arrow{dr}{\bar{m}_1} & \bullet \arrow[equal]{d} \\
    		\bullet \arrow{d}{\epsilon} \arrow{dr}{m_2} && t^* R_X(t_* t^* p^* s_* A \otimes_1 t_* t^* p^* s_* B) \arrow{d}{\bar{m}_1} & t^* R_X (p^* (s_* A \otimes_1 s_* B)) \arrow{d}{\eta} \arrow{ddl}{\eta} \arrow{dr}{\bar{m}_1} & \bullet \arrow{d}{\theta} \\
    		\bullet \arrow{d}{m_2} & t^* (R_X (t_* f^* s^* s_* A) \otimes_2 R_X(t_* f^* s^* s_* B)) \arrow{d}{\rho} \arrow{dl}{\epsilon} & t^* R_X(t_* (t^* p^* s_* A \otimes_1 t^* p^* s_* B)) \arrow{d}{m_1} \arrow[equal]{ddl} & t^* R_X(t_* t^* p^* (s_* A \otimes_1 s_* B)) \arrow[equal]{d} \arrow{dr}{\bar{m}_1} & \bullet \arrow{d}{\eta} \\
    		\bullet \arrow{d}{\rho} & t^* R_X(t_* f^* s^* s_* A \otimes_1 t_* f^* s^* s_* B) \arrow{dl}{\epsilon} \arrow{d}{\bar{m}_1} \arrow[equal]{uur} & t^* R_X(t_* t^* (p^* s_* A \otimes_1 p^* s_* B)) \arrow{ru}{m_1} & t^* R_X (t_* f^* s^* (s_* A \otimes_1 s_* B)) \arrow{dr}{\bar{m}_1} & \bullet \arrow[equal]{d} \\
    		\bullet \arrow{d}{\bar{m}_1} & t^* R_X (t_* (f^* s^* s_* A \otimes_1 f^* s^* s_* B)) \arrow{dl}{\epsilon} \arrow{rr}{m_1} && t^* R_X (t_* f^* (s^* s_* A \otimes_1 s^* s_* B)) \arrow{u}{m_1} \arrow{dr}{\epsilon} & \bullet \arrow{d}{\epsilon} \\
    		\bullet \arrow{rrrr}{m_1} &&&& \bullet
    	\end{tikzcd}
    \end{equation*}
	Here, the central upper six-term piece is commutative by axiom (mor-ETS) over $\Ucal$, the central lower six-term piece is commutative by \cite[Lemma~2.3]{Ter23Fib}, and all the remaining pieces are commutative by naturality and by construction. This proves the claim.
\end{proof}

\subsection*{Extending associative, commutative and unitary morphisms} 

We conclude this section by studying the compatibility of the above construction with internal associativity, commutativity, and unit constraints. As in \cite[\S~8]{Ter23Fib}, we describe these compatibilities as follows:
\begin{enumerate}
	\item Given internal associativity constraint $a_1$ and $a_2$ on $(\otimes_1,m_1)$ and $(\otimes_2,m_2)$, respectively, we say that $\rho$ is \textit{associative} (with respect to $a_1$ and $a_2$) if it satisfies the following additional condition:
	\begin{enumerate}
		\item[(mor-$a$ITS)] For every $S \in \Scal$, the diagram of functors $\Hbb_1(S) \times \Hbb_1(S) \times \Hbb_1(S) \rightarrow \Hbb_2(S)$
		\begin{equation*}
			\begin{tikzcd}
				(R_S(A) \otimes_2 R_S(B)) \otimes_2 R_S(C) \arrow{r}{\rho} \arrow{d}{a_2} & R_S(A \otimes_1 B) \otimes_2 R_S(C) \arrow{r}{\rho} & R_S((A \otimes_1 B) \otimes_1 C) \arrow{d}{a_1} \\
				R_S(A) \otimes_2 (R_S(B) \otimes_2 R_S(C)) \arrow{r}{\rho} & R_S(A) \otimes_2 R_S(B \otimes_1 C) \arrow{r}{\rho} & R_S(A \otimes_1 (B \otimes_1 C))
			\end{tikzcd}
		\end{equation*}
		is commutative.
	\end{enumerate}
    \item Given internal commutativity constraints $c_1$ and $c_2$ on $(\otimes_1,m_1)$ and $(\otimes_2,m_2)$, respectively, we say that $\rho$ is \textit{symmetric} (with respect to $c_1$ and $c_2$) if it satisfies the following additional condition:
    \begin{enumerate}
    	\item[(mor-$c$ITS)] For every $S \in \Scal$, the diagram of functors $\Hbb_1(S) \times \Hbb_1(S) \rightarrow \Hbb_2(S)$
    	\begin{equation*}
    		\begin{tikzcd}
    			R_S(A) \otimes_2 R_S(B) \arrow{r}{\rho} \arrow{d}{c_2} & R_S(A \otimes_1 B) \arrow{d}{c_1} \\
    			R_S(B) \otimes_2 R_S(A) \arrow{r}{\rho} & R_S(B \otimes_1 A)
    		\end{tikzcd}
    	\end{equation*}
    	is commutative.
    \end{enumerate}
    \item Given internal unit constraints $u_1$ and $u_2$ on $(\otimes_1,m_1)$ and $(\otimes_2,m_2)$ with unit sections $\unit^{(1)}$ and $\unit^{(2)}$, respectively, together with an isomorphism $w: R(\unit^{(1)}) \xrightarrow{\sim} \unit^{(2)}$ between section of $\Hbb_2$, we say that $\rho$ is \textit{unitary via $w$} (with respect to $u_1$ and $u_2$) if it satisfies the following additional condition:
    \begin{enumerate}
    	\item[(mor-$u$ITS)] For every $S \in \Scal$, the two diagrams of functors $\Hbb_1(S) \rightarrow \Hbb_2(S)$
    	\begin{equation*}
    		\begin{tikzcd}
    			R_S(A) \otimes_2 R_S(\unit^{(1)}_S) \arrow{r}{w_S} \arrow{d}{\rho} & R_S(A) \otimes_2 \unit^{(2)}_S \arrow{d}{u_{2,r}} \\
    			R_S(A \otimes_1 \unit^{(1)}_S) \arrow{r}{u_{1,r}} & R_S(A)
    		\end{tikzcd}
    		\qquad
    		\begin{tikzcd}
    			R_S(\unit^{(1)}_S) \otimes_2 R_S(B) \arrow{r}{w_S} \arrow{d}{\rho} & \unit^{(2)}_S \otimes_2 R_S(B) \arrow{d}{u_{2,l}} \\
    			R_S(\unit^{(1)}_S \otimes_1 B) \arrow{r}{u_{1,l}} & R_S(B)
    		\end{tikzcd}
    	\end{equation*}
    	are commutative.
    \end{enumerate}
\end{enumerate}
Clearly, each of these three properties is preserves under restriction to the underlying $\Ucal$-fibered categories. In the converse direction, we have the following result:

\begin{lem}\label{lem:ext-acu-rho}
	The following statements hold:
	\begin{enumerate}
		\item Let $a_1$ and $a_2$ be internal associativity constraints on $(\otimes_1,m_1)$ and $(\otimes_2,m_2)$, respectively, and suppose that $\rho$ is associative (with respect to $a_1$ and $a_2$). Then $\tilde{\rho}$ is associative (with respect to the extended  constraints $\tilde{a}_1$ and $\tilde{a}_2$). 
		\item Let $c_1$ and $c_2$ be internal commutativity constraints on $(\otimes_1,m_1)$ and $(\otimes_2,m_2)$, respectively, and suppose that $\rho$ is symmetric (with respect ot $c_1$ and $c_2$). Then $\tilde{\rho}$ is symmetric (with respect to the extended constraints $\tilde{c}_1$ and $\tilde{c}_2$).
		\item Let $u_1$ and $u_2$ be internal unit constraints with unit sections $\unit^{(1)}$ and $\unit^{(2)}$, respectively, and suppose that $\rho$ is unitary via $w: R(\unit^{(1)}) \xrightarrow{\sim} \unit^{(2)}$ (with respect to to $u_1$ and $u_2$). Then $\tilde{\rho}$ is unitary via the extended isomorphism $\tilde{w}: \tilde{R}(\tilde{\unit}^{(1)}) \xrightarrow{\sim} \tilde{\unit}^{(2)}$ (with respect to the extended constraints $\tilde{u}_1$ and $\tilde{u}_2$).
	\end{enumerate}
\end{lem}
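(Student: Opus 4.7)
For each of the three statements, the plan is to proceed exactly as in the preceding proofs of this section (e.g.\ the proof of \Cref{prop:asso_int-ext} and the last part of the proof of \Cref{prop:ext-unit}): fix $S \in \Scal$ and an embedding $(Y;s) \in \Emb_{\Ucal}(S)$, observe that by the very construction of the extended structures $\tilde{\rho}$, $\tilde{a}_i$, $\tilde{c}_i$, $\tilde{u}_i$ and $\tilde{w}$ via the trivial groupoid $\Gcal_{\Ucal}(S)$ (\Cref{lem_emb}, \Cref{lem:ext-otimes}, \Cref{lem_rho_sm}, \Cref{lem:asso_int-ext}, \Cref{lem:comm_int-ext}, \Cref{lem:ext-unit}), it suffices to verify the required compatibility diagram after replacing each extended symbol by its $(Y;s)$-version, and then expand everything into a large diagram over $\Hbb_2(S)$ whose edges are composites of $s^*$ applied to structural arrows $\rho$, $a_i$, $c_i$, $u_i$, $m_i$, together with unit and co-unit arrows $\eta, \epsilon$ of the adjunction $s^* \dashv s_*$. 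The ultimate goal is then to cut the large diagram into small cells, each of which is commutative either by naturality, by construction, or by the corresponding axiom on $\Ucal$ (namely (mor-$a$ITS), (mor-$c$ITS) or (mor-$u$ITS) applied to $Y$).

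Concretely, for (1) I would consider the associativity hexagon for $\tilde{\rho}$ at $S$ and, after reduction to the $(s)$-version, isolate a central hexagonal cell which is the image under $s^*$ of the associativity hexagon (mor-$a$ITS) applied to $R_Y$ at the objects $s_* A, s_* B, s_* C \in \Hbb_1(Y)$; the remaining cells on the boundary involve only insertions of $\eta: \id \to s_* s^*$ around intermediate occurrences of $s_* \otimes s_*$, and commute by naturality of $\rho$, $a_2$, $a_1$, $m_1, m_2$, together with the already-established compatibility between $\bar m_i$ and $s_*$ (as used in \Cref{lem_rho_sm}). For (2) the argument is essentially the same but simpler: the central cell is the image under $s^*$ of the naturality square for (mor-$c$ITS) applied at $Y$, and the outer cells are commutative by naturality of $c_1, c_2$ and $\rho$. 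For (3), I would first unwind both $\tilde{u}_{i,r}$ (resp.\ $\tilde{u}_{i,l}$) as in \Cref{lem:ext-unit}, then unwind $\tilde{w}_S$ as the composite $s^* R_Y(s_* s^* \unit^{(1)}_Y) \xleftarrow{\eta} s^* R_Y(\unit^{(1)}_Y) \xrightarrow{w_Y} s^* \unit^{(2)}_Y \xrightarrow{\eta^{-1}} s^* s_* s^* \unit^{(2)}_Y \to \unit^{(2)}_S$ induced by \Cref{lem:ext-sect-comp}; the central cell is then the image under $s^*$ of the unitarity square for $\rho$ at $Y$, with all surrounding cells commutative by naturality.

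The main (essentially only) obstacle will be purely notational: writing down all of these diagrams carefully enough to see the central cells cleanly, since each square involves two variables going through both $s_* s^*$ replacements, the tensor product, and the morphism $R_Y$, which triples the number of intermediate vertices compared to \Cref{lem:comm_int-ext} (for part 2) and multiplies by a further factor in parts 1 and 3. No new conceptual ingredient is required: the invertibility of the relevant $\eta$-insertions follows as before from localization triangles combined with the localic assumptions on $\Hbb_1$, $\Hbb_2$ and $R$ (as in \Cref{lem:ext-otimes}(1) and \Cref{lem:ext-monoint}), so each diagram may be freely inverted where needed. The independence of $(Y;s)$ is automatic because $\tilde{\rho}, \tilde{a}_i, \tilde{c}_i, \tilde{u}_i, \tilde{w}$ have all already been shown to be independent of this choice. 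Since the paper's convention is to omit such computations once the pattern is established (as done for \Cref{thm:ext-otimes}), I would present the three verifications in parallel and only write out the diagram chase in the associativity case, leaving the other two as straightforward variants in the spirit of \Cref{lem:ac-comp_int_ext}.
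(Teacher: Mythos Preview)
Your proposal is correct and follows essentially the same approach as the paper: for each of the three parts, fix an embedding $(Y;s) \in \Emb_{\Ucal}(S)$, reduce the required diagram to its $(s)$-version, and unwind into a large diagram whose central cell is the image under $s^*$ of the corresponding axiom (mor-$a$ITS), (mor-$c$ITS), or (mor-$u$ITS) applied at $Y$, with all peripheral cells commutative by naturality and construction. The only difference is presentational: the paper writes out the diagram chase for all three parts (the commutativity case being particularly short), whereas you propose to write only the associativity case in full.
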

\begin{proof}
	\begin{enumerate}
		\item We need to check that condition (mor-$a$ITS) is satisfied: given $S \in \Scal$, we have to show that the diagram of functors $\Hbb_1(S) \times \Hbb_1(S) \times \Hbb_1(S) \rightarrow \Hbb_2(S)$
		\begin{equation*}
			\begin{tikzcd}
				(R_S(A) \otimes_2 R_S(B)) \otimes_2 R_S(C) \arrow{r}{\tilde{\rho}} \arrow{d}{a_2} & R_S(A \otimes_1 B) \otimes_2 R_S(C) \arrow{r}{\tilde{\rho}} & R_S((A \otimes_1 B) \otimes_1 C) \arrow{d}{a_1} \\
				R_S(A) \otimes_2 (R_S(B) \otimes_2 R_S(C)) \arrow{r}{\tilde{\rho}} & R_S(A) \otimes_2 R_S(B \otimes_1 C) \arrow{r}{\tilde{\rho}} & R_S(A \otimes (B \otimes C))
			\end{tikzcd}
		\end{equation*}
		is commutative. To this end, choose an embedding $(Y;s) \in \Emb_{\Ucal}(S)$. Then, by construction, it suffices to show that the diagram
		
			\begin{equation*}
				\begin{tikzcd}[font=\small]
					(R_S^{(s)}(A) \otimes_{2,s} R_S^{(s)}(B)) \otimes_{2,s} R_{S}^{(s)}(C) \arrow{r}{\rho_S^{(s)}} \arrow{d}{a_{2,S}^{(s)}} & R_S^{(s)}(A \otimes_{1,s} B) \otimes_{2,s} R_S^{(s)}(C) \arrow{r}{\rho_S^{(s)}} & R_S^{(s)}((A \otimes_{1,s} B) \otimes_{1,s} C) \arrow{d}{a_{1,S}^{(s)}} \\
					R_S^{(s)}(A) \otimes_{2,s} (R_S^{(s)}(B) \otimes_{2,s} R_{S}^{(s)}(C)) \arrow{r}{\rho_S^{(s)}} & R_S^{(s)}(A) \otimes_{2,s} R_S^{(s)}(B \otimes_{1,s} C) \arrow{r}{\rho_S^{(s)}} & R_S^{(s)}(A \otimes_{1,s} (B \otimes_{1,s} C))
				\end{tikzcd}
			\end{equation*}
		
		is commutative. Unwinding the various definitions (and writing  for notational convenience) we obtain the image under $s^*$ of the diagram
		\begin{equation*}
			\begin{tikzcd}[font=\tiny]
				s_* s^* R_Y(A' \otimes_1 B') \otimes_2 s_* s^* R_Y(C') \arrow{r}{\eta} & s_* s^* R_Y(s_* s^* (A' \otimes_1 B')) \otimes_2 s_* s^* R_Y(C') & R_Y(s_* s^* (A' \otimes_1 B')) \otimes_2 R_Y(C') \arrow{l}{\eta} \arrow{d}{\rho} \\
				s_* s^* (R_Y(A') \otimes_2 R_Y(B')) \otimes_2 s_* s^* R_Y(C') \arrow{u}{\rho} \arrow{d}{\eta} && R_Y(s_* s^* (A'' \otimes_1 B') \otimes_1 C') \arrow{d}{\eta} \\
				s_* s^* (s_* s^* R_Y(A') \otimes_2 s_* s^* R_Y(B')) \otimes_2 s_* s^* R_Y(C') && s_* s^* R_Y(s_* s^* (A' \otimes_1 B') \otimes_1 C') \\
				(s_* s^* R_Y(A') \otimes_2 s_* s^* R_Y(B')) \otimes_2 s_* s^* R_Y(C') \arrow{u}{\eta} \arrow{d}{a} && s_* s^* R_Y((A' \otimes_1 B') \otimes_1 C') \arrow{u}{\eta} \arrow{d}{a} \\
				s_* s^* R_Y(A') \otimes_2 (s_* s^* R_Y(B') \otimes_2 s_* s^* R_Y(C')) \arrow{d}{\eta} && s_* s^* R_Y(A' \otimes_1 (B' \otimes_1 C')) \arrow{d}{\eta} \\
				s_* s^* R_Y(A') \otimes_2 s_* s^* (s_* s^* R_Y(B') \otimes_2 s_* s^* R_Y(C')) && s_* s^* R_Y(A' \otimes_1 s_* s^* (B' \otimes_1 C')) \\
				s_* s^* R_Y(A') \otimes_2 s_* s^* (R_Y(B') \otimes_2 R_Y(C')) \arrow{u}{\eta} \arrow{d}{\rho} && R_Y(A' \otimes_1 s_* s^* (B' \otimes_1 C')) \arrow{u}{\eta} \\
				s_* s^* R_Y(A') \otimes_2 s_* s^* R_Y(B' \otimes_1 C') \arrow{r}{\eta} & s_* s^* R_Y(A') \otimes_2 s_* s^* R_Y(s_* s^* (B' \otimes_1 C')) & R_Y(A') \otimes_2 R_Y(s_* s^* (B' \otimes_1 C')) \arrow{l}{\eta} \arrow{u}{\rho}
			\end{tikzcd}
		\end{equation*}
		where, for notational convenience, we have set $A' := s_* A$, $B' := s_* B$ and $C' := s_* C$. We decompose it as
		\begin{equation*}
		\begin{tikzcd}[font=\small]
			\bullet \arrow{r}{\eta} & \bullet & \bullet \arrow{l}{\eta} \arrow{d}{\rho} \\
			\bullet \arrow{u}{\rho} \arrow{d}{\eta} & R_Y(A' \otimes_1 B') \otimes_2 R_Y(C') \arrow{ul}{\eta} \arrow{ur}{\eta} \arrow{ddr}{\rho} & \bullet \arrow{d}{\eta} \\
			\bullet && \bullet \\
			\bullet \arrow{u}{\eta} \arrow{d}{a} & (R_Y(A') \otimes_2 R_Y(B') \otimes_2 R_Y(C')) \arrow{uu}{\rho} \arrow{l}{\eta} \arrow{uul}{\eta} \arrow{d}{a} & \bullet \arrow{u}{\eta} \arrow{d}{a} \\
			\bullet \arrow{d}{\eta} & R_Y(A') \otimes_2 (R_Y(B') \otimes_2 R_Y(C')) \arrow{dd}{\rho} \arrow{l}{\eta} \arrow{ddl}{\eta} & \bullet \arrow{d}{\eta} \\
			\bullet && \bullet \\
			\bullet \arrow{u}{\eta} \arrow{d}{\rho} & R_Y(A') \otimes_2 R_Y(B' \otimes_1 C') \arrow{dl}{\eta} \arrow{dr}{\eta} \arrow{uur}{\rho} & \bullet \arrow{u}{\eta} \\
			\bullet \arrow{r}{\eta} & \bullet & \bullet \arrow{l}{\eta} \arrow{u}{\rho}
		\end{tikzcd}
		\end{equation*}
	    Here, the central-right diagram is commutative by axiom (mor-$a$ITS) over $\Ucal$, and all the remaining pieces are commutative by naturality and by construction. This proves the claim. 
		\item We need to check that condition (mor-$c$ITS) is satisfied: given $S \in \Scal$, we have to show that the diagram of functors $\Hbb_1(S) \times \Hbb_1(S) \rightarrow \Hbb_2(S)$
		\begin{equation*}
			\begin{tikzcd}
				R_S(A) \otimes_2 R_S(B) \arrow{r}{\tilde{\rho}} \arrow{d}{c_2} & R_S(A \otimes_1 B) \arrow{d}{c_1} \\
				R_S(B) \otimes_2 R_S(A)\arrow{r}{\tilde{\rho}} & R_S(B \otimes_1 A)
			\end{tikzcd}
		\end{equation*}
		is commutative. To this end, choose an embedding $(Y;s) \in \Emb_{\Ucal}(S)$. Then, by construction, it suffices to show that the diagram
		\begin{equation*}
			\begin{tikzcd}[font=\small]
				R_S^{(s)}(A) \otimes_{2,s} R_S^{(s)}(B) \arrow{r}{\rho_S^{(s)}} \arrow{d}{c_{2,S}^{(s)}} & R_S^{(s)}(A \otimes_{1,s} B) \arrow{d}{c_{1,S}^{(s)}} \\
				R_S^{(s)}(B) \otimes_{2,s} R_S^{(s)}(A) \arrow{r}{\rho_S^{(s)}} & R_S^{(s)}(B \otimes_{1,s} A)
			\end{tikzcd}
		\end{equation*}
		is commutative. Unwinding the various definitions, we obtain the image under $s^*$ of the outer part of the diagram
		
			\begin{equation*}
				\begin{tikzcd}[font=\small]
					s_* s^* R_Y(s_* A) \otimes_2 s_* s^* R_Y(s_* B) \arrow{d}{c_2} & R_Y(s_* A) \otimes_2 R_Y(s_* B) \arrow{l}{\eta} \arrow{r}{\rho} \arrow{d}{c_2} & R_Y(s_* A \otimes_1 s_* B) \arrow{r}{\eta} \arrow{d}{c_1} & s_* s^* R_Y(s_* A \otimes_1 s_* B) \arrow{d}{c_1} \\
					s_* s^* R_Y(s_* B) \otimes_2 s_* s^* R_Y(s_* A) & R_Y(s_* B) \otimes_2 R_Y(s_* A) \arrow{l}{\eta} \arrow{r}{\rho} & R_Y(s_* B \otimes_1 s_* A) \arrow{r}{\eta} & s_* s^* R_Y(s_* B \otimes_1 s_* A)
				\end{tikzcd}
			\end{equation*}
		
		where the central square is commutative by axiom (mor-$c$ITS) applied to $\rho$ while the two lateral squares are commutative by naturality.
		\item We need to check that condition (mor-$u$ITS) is satisfied: given $S \in \Scal$, we have to show that the two diagrams of functors $\Hbb_1(S) \rightarrow \Hbb_2(S)$
		\begin{equation*}
			\begin{tikzcd}
				R_S(A) \otimes_2 R_S(\unit^{(1)}_S) \arrow{r}{w_S} \arrow{d}{\tilde{\rho}} & R_S(A) \otimes_2 \unit^{(2)}_S \arrow{d}{u_{2,r}} \\
				R_S(A \otimes_1 \unit^{(1)}_S) \arrow{r}{u_{1,r}} & R_S(A)
			\end{tikzcd}
			\qquad
			\begin{tikzcd}
				R_S(\unit^{(1)}_S) \otimes_2 R_S(B) \arrow{r}{w_S} \arrow{d}{\tilde{\rho}} & \unit^{(2)}_S \otimes_2 R_S(B) \arrow{d}{u_{2,l}} \\
				R_S(\unit^{(1)}_S \otimes_1 B) \arrow{r}{u_{1,l}} & R_S(B)
			\end{tikzcd}
		\end{equation*}
		are commutative. We only treat the case of the left-most diagram; the case of the right-most diagram is analogous. Choose an embedding $(Y;s) \in \Emb_{\Ucal}(S)$. Then, by construction, in order to show the commutativity of the left-most diagram above, it suffices to show that the diagram
		\begin{equation*}
			\begin{tikzcd}[font=\small]
				R_S^{(s)}(A) \otimes_2 R_S^{(s)}(\unit^{(1,s)}_S) \arrow{r}{w_S^{(s)}} \arrow{d}{\rho_S^{(s)}} & R_S^{(s)}(A) \otimes_2 \unit^{(2,s)}_S \arrow{d}{u_{2,r}^{(s)}} \\
				R_S^{(s)}(A \otimes_1 \unit^{(1,s)}_S) \arrow{r}{u_{1,r}^{(s)}} & R_S^{(s)}(A)
			\end{tikzcd}
		\end{equation*}
		is commutative. Unwinding the various definitions, we obtain the more explicit diagram
		\begin{equation*}
			\begin{tikzcd}[font=\small]
				s^* (s_* s^* R_Y(s_* A) \otimes_2 s_* s^* R_Y(s_* s^* \unit^{(1)}_Y)) & s^* (s_* s^* R_Y(s_* A) \otimes_2 s_* s^* R_Y(\unit^{(1)}_Y)) \arrow{l}{\eta} \arrow{r}{w_Y} & s^* (s_* R_Y(s_* A) \otimes_2 s_* s^* \unit^{(2)}_Y) \\
				s^* (R_Y(s_* A) \otimes_2 R_Y(s_* s^* \unit^{(1)}_Y)) \arrow{u}{\eta} \arrow{dd}{\rho} && s^* (s_* s^* R_Y(s_* A) \otimes_2 \unit^{(2)}_Y) \arrow{u}{\eta} \arrow{d}{u_{2,r}} \\
				&& s^* s_* s^* R_Y(s_* A) \arrow{d}{\epsilon} \\
				s^* R_Y (s_* A \otimes_1 s_* s^* \unit^{(1)}_Y) \arrow{d}{\eta} && s^* R_Y (s_* A) \\
				s^* R_Y (s_* s^* (s_* A \otimes_1 s_* s^* \unit^{(1)}_Y)) & s^* R_Y (s_* s^* (s_* A \otimes_1 \unit^{(1)}_Y)) \arrow{l}{\eta} \arrow{r}{u_{1,r}} & s^* R_Y (s_* s^* s_* A) \arrow{u}{\epsilon} 
			\end{tikzcd}
		\end{equation*}
		that we decompose as
		\begin{equation*}
			\begin{tikzcd}[font=\small]
				\bullet & \bullet \arrow{l}{\eta} \arrow{rr}{w_Y} && \bullet \\
				\bullet \arrow{u}{\eta} \arrow{dd}{\rho} & s^* (R_Y(s_* A) \otimes_2 R_Y(\unit^{(1)}_Y)) \arrow{l}{\eta} \arrow{u}{\eta} \arrow{dd}{\rho} \arrow{dr}{w_Y} && \bullet \arrow{u}{\eta} \arrow{d}{u_{2,r}} \\
				&& s^* (R_Y(s_* A) \otimes_2 \unit^{(2)}_Y) \arrow{uur}{\eta} \arrow{ur}{\eta} \arrow{dr}{u_{2,r}} & \bullet \arrow{d}{\epsilon} \\
				\bullet \arrow{d}{\eta} & s^* R_Y(s_* A \otimes_1 \unit^{(1)}_Y) \arrow{l}{\eta} \arrow{d}{\eta} \arrow{rr}{u_{1,r}} && \bullet \\
				\bullet & \bullet \arrow{l}{\eta} \arrow{rr}{u_{1,r}} && \bullet \arrow{u}{\epsilon} 
			\end{tikzcd}
		\end{equation*}
	    Here, the central piece is commutative by axiom (mor-$u$ITS) over $\Ucal$, while all the remaining pieces are commutative by naturality and by construction. This proves the claim.
	\end{enumerate}
\end{proof}


\begin{thebibliography}{9}
	\bibitem[Stacks]{stacks-project} The Stacks Project Authors,
	\textit{The Stacks Project}, \url{https://stacks.math.columbia.edu}.
	\bibitem[AK11]{AraKang} D. Arapura, Su-Jeong Kang, 
	\textit{K{\"a}hler--de Rham cohomology and Chern classes}. Comm. Algebra, 39(4): pp. 1153--1167 (2011).
	\bibitem[Ayo07a]{Ayo07a} J. Ayoub,
	\textit{Les six opérations de Grothendieck et le formalisme des cycles évanescents dans le monde motivique, I}. Astérisque (2007), no. 314, x + 466 pp. (2008). 
	\bibitem[Ayo07b]{Ayo07b} J. Ayoub,
	\textit{Les six opérations de Grothendieck et le formalisme des cycles évanescents dans le monde motivique, II}. Astérisque (2007), no. 315, vi + 364 pp. (2008).
	\bibitem[Ayo10]{Ayo10} J. Ayoub,
	\textit{Note sur les opérations de Grothendieck et la réalization de Betti}. J. Inst. Math Jussieu \textbf{9} (2010), no. 2, pp. 225–263.
	\bibitem[Bei87]{Bei87} A. Beilinson,
	\textit{On the derived category of perverse sheaves}. K-theory, arithmetic and geometry (Moscow, 1984–1986), Lecture Notes in Math., vol. 1289, Springer, Berlin, 1987, pp. 27–41.
	\bibitem[BBD82]{BBD82} A. Beilinson, J. Bernstein, P. Deligne,
	\textit{Faisceaux pervers}. Analysis and topology on singular spaces, I (Luminy, 1981), Astérisque, vol. 100, Soc. Math. France, Paris, 1982, pp. 5–171.
	\bibitem[Ivo16]{IvReal} F. Ivorra,
	\textit{Perverse, Hodge and motivic realizations of étale motives}. Compos. Math.
	152 (2016), no. 6, pp. 1237–1285.
	\bibitem[IM19]{IM19} F. Ivorra, S. Morel,
	\textit{The four operations on perverse motives}. To appear in J. Eur. Math. Soc., available at \url{https://arxiv.org/abs/1901.02096}.
	\bibitem[Sai90]{Sai90} M. Saito,
	\textit{Mixed Hodge modules}. Publ. Res. Inst. Math. Sci., 26(2), pp. 221–333, 1990.
	\bibitem[Ter24T]{Ter23Fib} L. Terenzi,
	\textit{Tensor structures on fibered categories}. Preprint, available at \url{https://arxiv.org/pdf/2401.13491}.
	\bibitem[Ter24F]{Ter23Fact} L. Terenzi,
	\textit{Constructing monoidal structures on fibered categories via factorizations}. Preprint, available at \url{https://arxiv.org/pdf/2401.13489}.
	\bibitem[Ter24N]{Ter23Nori} L. Terenzi,
	\textit{Tensor structure on perverse Nori motives}. Preprint, available at \url{https://arxiv.org/abs/2401.13547}.
\end{thebibliography}
\end{document}